\documentclass[11pt, reqno]{amsart} 
\usepackage{amssymb,amscd,amsfonts,amsbsy}
\usepackage{latexsym}
\usepackage{exscale}
\usepackage{amsmath,amsthm,amsfonts}
\usepackage{mathrsfs}
\usepackage{xcolor} 
\usepackage[colorlinks=true,linkcolor=blue,citecolor=red,urlcolor=red, 
]{hyperref} 
\usepackage{esint} 
\usepackage{graphicx}
\usepackage{subcaption}
\usepackage{bm} 
\usepackage{enumitem}

\usepackage[utf8]{inputenc}

\usepackage{tikz}
\usetikzlibrary{arrows.meta, positioning} 
\tikzstyle{ultrathin}=[line width=.5pt]
\tikzstyle{thiner}=[line width=1pt]
\tikzstyle{thin}=[line width=1.5pt]
\tikzstyle{fat}=[line width=2pt]
\tikzstyle{heavier}=[line width=3pt]
\tikzstyle{ultrafat}=[line width=4pt]
\tikzstyle{ultrafat-pt}=[line width=7pt]
\definecolor{myred}{HTML}{E53935}
\definecolor{myblue}{HTML}{1E88E5}
\definecolor{mygreen}{HTML}{43A047}
\definecolor{myyellow}{HTML}{FDD835}
\definecolor{myorange}{HTML}{FB8C00}
\definecolor{mygold}{HTML}{F9A825}
\definecolor{mypurple}{HTML}{8E24AA}
\definecolor{mygray}{HTML}{BDBDBD}
\definecolor{mybrown}{HTML}{6D4C41}
\definecolor{mynavy}{HTML}{1A237E}
\definecolor{mypink}{HTML}{ffbfca}
\definecolor{myseagreen}{HTML}{26A69A}
\definecolor{myviolet}{HTML}{f07ef0}
\definecolor{mydarkblue}{HTML}{0D47A1}
\definecolor{mydarkcyan}{HTML}{E0FFFF}
\definecolor{darkgray}{rgb}{0.66, 0.66, 0.66}
\definecolor{mydarkgreen}{HTML}{1B5E20}
\definecolor{mydarkmagenta}{HTML}{AD1457}
\definecolor{mydarkorange}{HTML}{EF6C00}
\definecolor{lightblue}{rgb}{0.68, 0.85, 0.9}
\definecolor{lightcyan}{rgb}{0.88, 1.0, 1.0}
\definecolor{lightgray}{rgb}{0.83, 0.83, 0.83}
\definecolor{mylightgreen}{HTML}{81C784}
\definecolor{lightyellow}{rgb}{1.0, 1.0, 0.88}
\definecolor{myshadow}{rgb}{0.5, 0.5, 0.5}
\definecolor{pink}{rgb}{1.0, 0.75, 0.8}
\definecolor{violet}{rgb}{0.93, 0.51, 0.93}
\definecolor{myauxcolor}{RGB}{245, 255, 255}
\definecolor{mylightgreen}{RGB}{193, 225, 159}

\calclayout

\makeatletter
\renewcommand{\l@subsection}{\@tocline{2}{0pt}{2.5em}{3.2em}{\small}} 
\makeatother

\allowdisplaybreaks

\theoremstyle{plain}
\newtheorem{theorem}[equation]{Theorem}
\newtheorem{lemma}[equation]{Lemma}

\newtheorem{proposition}[equation]{Proposition}

\theoremstyle{definition}
\newtheorem{definition}[equation]{Definition}

\newtheorem{remark}[equation]{Remark}

\theoremstyle{remark}

\numberwithin{equation}{section} 

\newcommand{\mbf}[1]{\mathbf{#1}}

\renewcommand{\emptyset}{\mbox{\textup{\O}}}

\newcommand{\abs}[1]{\left\lvert #1 \right\rvert}

\newcommand{\norm}[1]{\left\lVert #1 \right\rVert}

\newcommand\restr[2]{\ensuremath{\left.#1\right|_{#2}}} 

\def\Xint#1{\mathchoice
{\XXint\displaystyle\textstyle{#1}}%
{\XXint\textstyle\scriptstyle{#1}}%
{\XXint\scriptstyle\scriptscriptstyle{#1}}%
{\XXint\scriptscriptstyle\scriptscriptstyle{#1}}%
\!\int}
\def\XXint#1#2#3{{\setbox0=\hbox{$#1{#2#3}{\int}$ }
	\vcenter{\hbox{$#2#3$ }}\kern-.585\wd0}}
\def\barint{\Xint-}
\newcommand{\bariint}{\barint\mkern-11.5mu\barint}

\def\all{{\operatorname{all}}}
\def\BMO{\operatorname{BMO}}

\DeclareMathOperator{\capac}{cap}
\def\diam{\operatorname{diam}}

\def\dist{\operatorname{dist}}

\def\good{{\operatorname{good}}}

\def\Lip{\operatorname{{Lip}}}

\def\parab{{\operatorname{par}}}
\def\supp{\operatorname{supp}}

\def\Stop{\operatorname{Stop}}
\def\Strip{\operatorname{Strip}}
\def\SStrip{\operatorname{\widetilde{Strip}}}

\def\Tree{\operatorname{Tree}}

\def\HD{\operatorname{HD}}
\def\LD{\operatorname{LD}}

\def\B{\mathbb{B}}
\def\C{\mbf{C}}
\def\D{\mathbb{D}}

\def\F{\mathcal{F}}
\def\G{\mathbb{G}}

\def\H{\mathcal{H}}
\def\N{\mathbb{N}}

\def\R{\mathbb{R}}
\def\Rn{\mathbb{R}^n}

\def\ree{{\R^{n+1}}}
\def\S{\mathbf{S}}
\def\T{\mathcal{T}}

\def\Z{\mathbb{Z}}
\def\X{\mathbf{X}}
\def\Y{\mathbf{Y}}

\newcommand{\pom}{{\partial \Omega}}

\newcommand{\om}{\Omega}

\begin{document}

\title[{\tiny Parabolic uniform rectifiability is characterized by Carleson measure estimates}]{Parabolic uniform rectifiability is characterized by Carleson measure estimates for bounded caloric functions}
\author[S. Bortz, P. Hidalgo-Palencia, S. Hofmann, J. Warta]
{Simon Bortz, Pablo Hidalgo-Palencia, Steve Hofmann, James Warta}



\address{Simon Bortz, Department of Mathematics, University of Alabama, Tuscaloosa, AL 35487, USA}
\email{sbortz@ua.edu}

\address{Pablo Hidalgo-Palencia, Departament de Matemàtiques i Informàtica, Universitat de Barcelona, Gran Via de les Corts Catalanes 585, 08007 Barcelona, Spain}
\email{pablo.hidalgo@ub.edu}

\address{Steve Hofmann, Department of Mathematics, University of Missouri, Columbia, MO 65211, USA}
\email{hofmanns@missouri.edu}

\address{James Warta, Department of Mathematics, University of Missouri, Columbia, MO 65211, USA}
\email{jw34r@missouri.edu}

\thanks{S.B. was supported by NSF grant no.~DMS-2555449, Simons Foundation grant MPS-TSM-00959861, and an AWI-CONSERVE Fellowship at the University of Alabama.
P.H.-P. has been supported by the grants CEX2019-000904-S-20-3 funded by MCIN/AEI/10.13039/501100011033, and PCI2024-155066-2 funded by MICIU/AEI/10.13039/501100011033/UE and co-funded by the European Union. He also acknowledges financial support from MCIN/AEI/10.13039/501100011033 grants CEX2019-000904-S and PID2019-107914GB-I00.
S.H. and J.W. were supported by NSF grant no.~DMS-2349846.}

\date{\today}


\begin{abstract}
	We show that the parabolic uniform rectifiability of a set can be characterized by an interior PDE property, namely Carleson measure estimates for bounded
	solutions to the heat equation. In particular, under very mild background
	hypotheses on an open set $\Omega$ and its boundary $\pom$,
	we show that the following are equivalent:
	\begin{itemize}
		\item[(i)] The quantity $|\nabla u(\cdot)|^2 \dist(\cdot, \partial\Omega)$ is the density of a Carleson measure on $\Omega$ for all bounded solutions to the heat equation in $\Omega$.
		\item[(ii)] The caloric measure for $\Omega$ admits a corona decomposition.
		\item[(iii)] $\pom$ is parabolic uniformly rectifiable.
	\end{itemize}
	The implication (iii) implies (i) is the main result of the cited paper \cite{BHHLN}.  The remaining implications are new, and are direct parabolic analogues of results of Garnett, Mourgoglou and Tolsa \cite{GMT} in the elliptic setting for the Laplacian; however, our proofs require several new ideas. In particular, we must adapt arguments to account both 
	for time-lag, and
	for the fact that some characterizations of (elliptic) uniform rectifiability have either not been developed or have been shown to be untrue in the parabolic setting. 
	
	Our background assumptions, as noted above, are very mild: we assume that the domain satisfies the time-symmetric capacity density condition and has interior corkscrews, and that the boundary is (non time-directed) Ahlfors-David regular. These are weaker than the background assumptions of Bortz, Hofmann, Martell and Nystr\"{o}m in \cite{BHMN}, and are essentially optimal for the boundary continuity properties of caloric functions on which our arguments rely. 
\end{abstract}

\maketitle

\tableofcontents


\section{Introduction}

The heat equation is the fundamental model for diffusion phenomena, but boundary value problems for it in rough domains behave in ways that can differ sharply from the elliptic case. In this paper we give the first characterization of parabolic uniform rectifiability (a purely geometric property of a space-time boundary) by an interior PDE estimate: a Carleson measure estimate for gradients of bounded solutions to the heat equation. Such characterizations linking PDE and geometry have a well-developed elliptic precedent, beginning with Dahlberg's theorem \cite{D} stating that the Laplace equation is $L^2$-solvable on Lipschitz domains, and culminating in \cite{AHMMT, HLMN}, where (elliptic) uniform rectifiability is shown to be the exact geometric condition for $L^p$-solvability of the Dirichlet problem. The parabolic theory, however, deviates from this template in an essential way, for reasons which have no elliptic counterpart.

Shortly after Dahlberg's result, Kaufman and Wu \cite{KW} showed that parabolic Lipschitz domains are not sufficient for $L^p$-solvability of the Dirichlet problem for the heat equation, for any $p < \infty$; in fact, caloric measure and surface measure can be mutually singular. The obstruction has no elliptic analogue, since it arises entirely from oscillations in the time variable of a parabolic Lipschitz graph. Identifying the correct extra regularity (namely, that the half-order time derivative of the parametrization of the boundary lies in BMO) took twenty years \cite{LM, HL}, and showing that this extra condition is also necessary for $L^p$-solvability has taken another thirty \cite{BHMN_graph, BHMN}. These works show that parabolic uniform rectifiability (Subsection~\ref{subsec:UR}), the quantitative condition capturing this additional temporal regularity, plays exactly the role that (elliptic) uniform rectifiability plays in the elliptic theory: it is the geometric condition under which boundary value problems for the heat equation satisfy the expected solvability properties. It is this condition that we characterize here.

In the elliptic setting, the analogous PDE characterization, i.e. that Carleson measure estimates for bounded harmonic functions are equivalent to uniform rectifiability, is due to Martell, Mayboroda and the third author \cite{HMM} and Garnett, Mourgoglou and Tolsa \cite{GMT}. Our result is the parabolic analogue of this equivalence, completing the same circle of ideas to understand the relationship between PDE and geometry well beyond graphical domains. As remarked above, however, the passage from elliptic to parabolic here is not routine, and requires several new ideas that we describe below.


\subsection{Main result}

In this paper, we characterize parabolic uniform rectifiability by a purely interior PDE condition. Together with the recent work of Martell, Nystr\"om and the first and third authors in \cite{BHMN}, where this geometric property was shown to arise naturally in the study of quantitative solvability of the Dirichlet problem, our results further highlight the central role of parabolic uniform rectifiability in the analysis of rough space-time domains.

Our PDE property is a Carleson measure estimate on the gradients of bounded solutions to the heat equation, in the same sense used in the elliptic theory of \cite{HMM, GMT}. A bounded caloric function can oscillate only by developing large gradients, and the estimate controls how much these oscillations can accumulate over scales and locations. The weight $\delta_\Omega$ makes the estimate meaningful despite the possible blow-up of $\abs{\nabla u}$ near $\pom$ and, along with the normalization by $r^{n+1}$ in the right hand side, makes the condition scale-invariant.
For the definitions of cylinders $\C_r$ and the distance to the boundary $\delta_\Omega$, see Subsection~\ref{sec:notation}. 

\begin{definition}[Carleson measure estimates, p-CME] \label{def:CME}
	Let $\Omega \subset \ree = \R^n \times \R$ be an open set in space-time. We say that \textit{bounded caloric functions in $\Omega$ satisfy Carleson measure estimates}, which we frequently abbreviate by ``p-CME holds in $\Omega$'', if there exists $C_{\mathrm{CME}} > 0$ such that
	for every $(y, s) \in \pom$ and $r > 0$, and for every bounded caloric function $u$ in $\Omega$ (i.e. satisfying $\partial_t u - \Delta u = 0$ in $\Omega$), it holds 
	\begin{equation*}
		\iiint_{\Omega \cap \C_r(y, s)} \abs{\nabla u(X, t)}^2 \delta_{\Omega}(X, t) \, dX \, dt 
		\leq
		C_{\mathrm{CME}} \norm{u}_{L^\infty(\Omega)}^2 r^{n+1}.
	\end{equation*}
\end{definition}

Carleson measure estimates have played a central role in harmonic analysis since the work of Fefferman on the $H^1$-BMO duality \cite{FS} and later became fundamental in the study of boundary value problems, where they provide a natural endpoint notion of solvability for the Dirichlet problem, see e.g. \cite{KKoPT, DKP, KKiPT, HMM, GMT, HLe, GH_BMO}.

The main result of the paper is the following, stating that parabolic uniform rectifiability can be characterized by the fact that solutions to the heat equation satisfy the Carleson measure estimates defined right above, under weaker background hypotheses than those of \cite{BHMN}. For precise definitions, we refer to Section~\ref{sec:preliminaries}. 

\begin{theorem}[p-CME implies p-UR] \label{th:main}
	Let $\Omega \subset \ree = \R^n \times \R$ be an open set in space-time. Assume that $\Omega$ satisfies the interior corkscrew condition, the time-symmetric capacity density condition (TSCDC), and that $\Sigma := \pom$ is parabolic Ahlfors-David regular (p-ADR). Then, bounded caloric functions in $\Omega$ satisfy Carleson measure estimates if, and only if, $\Sigma$ is parabolic uniformly rectifiable (p-UR). 
\end{theorem}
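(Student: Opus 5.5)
The direction p-UR $\Rightarrow$ p-CME is the main result of \cite{BHHLN}; I will check that its hypotheses are implied by ours (interior corkscrews, TSCDC, p-ADR) or can be reduced to them by a localization argument. So the substance of Theorem~\ref{th:main} is p-CME $\Rightarrow$ p-UR. I would follow the route (i) $\Rightarrow$ (ii) $\Rightarrow$ (iii) from the abstract, modelled on Garnett--Mourgoglou--Tolsa \cite{GMT}.

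For (i) $\Rightarrow$ (ii), first note that the TSCDC gives Hölder continuity at the boundary of caloric functions with continuous data, uniformly in the scale. I would fix a parabolic dyadic cube $Q\subset\Sigma$ and the caloric measure $\omega^{X}$ with pole at a time-forward corkscrew point, and split $Q$ into ``high density'' and ``low density'' subcubes by comparing $\omega^X(Q')/\sigma(Q')$ with that of the parent. On each maximal stopping cube $Q_j$, take $u$ to be the caloric extension of a suitable $\{0,1\}$-valued indicator of a neighbourhood of the stopping cubes. Boundary Hölder continuity then forces $u$ to oscillate by a fixed amount between the corkscrew regions of $Q_j$ and of its ancestors. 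Using Poincaré and Caccioppoli on Whitney regions together with interior corkscrews, this oscillation gives a lower bound for $\iiint |\nabla u|^2\delta$ over the relevant Carleson boxes, and p-CME turns that into a packing condition $\sum_{Q_j\subset R}\sigma(Q_j)\lesssim\sigma(R)$ for the stopping cubes.

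Here time-lag is the main difficulty. Caloric measure seen from a pole $(X,t)$ only charges the past, so the corkscrew points must be taken time-forward with a gap comparable to $r^2$. The density comparisons will need a backward-in-time Harnack chain, which is available from the time-symmetric CDC together with interior corkscrews after passing to large-enough cylinders. I expect this packing/stopping-time argument to be the main obstacle: making the oscillation argument work with only boundary Hölder continuity and no Harnack chain condition. The corona decomposition is then ``$\omega$ is doubling-comparable to $\sigma$ on trees, and tree tops pack.''

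For (ii) $\Rightarrow$ (iii), the elliptic proof in \cite{GMT} passes through characterizations of UR that are unavailable parabolically, for instance via Riesz transforms or big pieces of Lipschitz graphs. Instead, on each tree where $\omega\approx\sigma$ up to constants, I would use the $A_\infty$-type control within the tree to get big pieces on which caloric measure is comparable to surface measure. I would then invoke the result of \cite{BHMN}, localized to a tree, that such $A_\infty$ control implies p-UR under their hypotheses. Its proof should work with our weaker assumptions, since it uses only boundary Hölder continuity and corkscrews. Finally, the geometric lemma for parabolic $\beta$-numbers sums over the trees plus the packing of tops, which yields p-UR through the parabolic corona characterization of p-UR.
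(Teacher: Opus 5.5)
Citing \cite{BHHLN} for p-UR $\Rightarrow$ p-CME is fine. The converse, however, has a genuine gap at (ii) $\Rightarrow$ (iii), which is exactly where the parabolic difficulty lies. A corona decomposition for $\omega$ (Definition~\ref{def:corona}) only gives, for cubes $Q$ \emph{in} a tree $\S$, that $\omega^{\mbf{Y}_\S}(Q)/\sigma(Q)$ is comparable to its value at $Q(\S)$, plus an $L^{1/2}$ maximal-function bound. Since trees may stop at every scale, this yields neither $A_\infty$ inside a tree nor a big piece on which $d\omega/d\sigma$ is controlled, so there is nothing to feed into \cite{BHMN}. What does transfer from \cite{BHMN} under TSCDC and p-ADR (Proposition~\ref{prop:WHSA}) is the construction of very flat interior $\Lip(1,1/2)$ graphs from the normalized adjoint Green function $\widehat u$. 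Such approximations alone do not give p-UR (\cite[Observation 4.19]{BHHLN_22a}). The step in \cite{BHMN} that makes the graphs \emph{regular}, i.e.\ gives $D^t_{1/2}\psi_{\S^*}\in\BMO_{\parab}$ (\cite[Proposition 10.4]{BHMN}), uses their $A_\infty$ hypothesis, which you do not have. No $\beta$-number argument supplies this half-derivative bound either.

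The missing idea is Theorem~\ref{th:push_cme}: prove p-CME directly in each graph domain $\Omega_{\S^*}$. Near the non-contact part of the boundary, cover a graphical strip by small cylinders on which $\psi_{\S^*}$ agrees with a regular graph (Lemmas~\ref{lem:DDH} and \ref{lem:ur_implies_cme}). Above the auxiliary graph $\psi_{\S^*}+c_2D$, write $2|\nabla u|^2=-H(u^2)$ and integrate by parts against $\widehat u\,\Psi_N$. This uses $\delta_{\Omega_{\S^*}}\le\delta_\Omega\lesssim\widehat u$, $\delta_\Omega\lesssim\delta_{\Omega_{\S^*}}$ there, and $|\nabla\widehat u|+\widehat u/\delta_\Omega\lesssim 1$ (Lemma~\ref{lem:u_upper_bounds}). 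Then \cite{HW} gives $A_\infty$ in $\Omega_{\S^*}$ and \cite{BHMN_graph} gives regularity. The packing of graph trees together with Theorem~\ref{th:corona_implies_UR} concludes.

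Your (i) $\Rightarrow$ (ii) sketch would not close as written either. A caloric extension of an indicator near the stopping cubes, oscillating between corkscrews of $Q_j$ and of its ancestors, spreads that oscillation over about $\log(\ell(R)/\ell(Q_j))$ Whitney scales. So it gives no lower bound $\gtrsim\sigma(Q_j)$ on a region of bounded overlap, and p-CME applied to one bounded function gives only $\lesssim\ell(R)^{n+1}$. The paper instead works with the root $Q$ of each LD stopping. It takes a pole $\mbf X_0$ at depth $\approx\varepsilon'\ell(Q)$, and $E_Q=Q\setminus\mathcal L(Q)$ then carries almost all of $\omega^{\mbf X_0}(Q)$. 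Proposition~\ref{prop:GMT} builds $u_Q$ from a TBCDC capacitary potential times $\mathbf 1_{E_Q}$, oscillating between depths $\approx\tau\varepsilon'\ell(Q)$ and $\approx\varepsilon'\ell(Q)$. The comparison with the pole is made by a time-increasing Harnack chain along a parabola. Since the $E_Q$ are pairwise disjoint, Rademacher sums $\sum\pm u_Q$ stay uniformly bounded, and p-CME packs the LD cubes.

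Your ``backward-in-time Harnack chain'' is not available here: TSCDC and corkscrews give no connectivity, and interior Harnack only bounds earlier values by later ones. Nor can you use a single time-forward corkscrew pole at scale $\ell(Q)$, since then $\omega^{\mbf X}(\Sigma\setminus Q)$ is not small, and the restriction to $E_Q$ requires that. Proposition~\ref{prop:GMT} needs a near-boundary pole, while $\widehat u$ in the second half needs a far-future pole. Theorem~\ref{th:corona} resolves this conflict with a two-pole stopping time: $\LD$ with respect to $\mbf X_0$, $\HD$ with respect to $\mbf X_1=\mbf X_0+(0,\lambda\ell(Q_0)^2)$, the two linked by forward Harnack and Bourgain. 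It then discards the top $J$ generations of each tree.
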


In this paper, we will only prove the direction that p-CME in $\Omega$ implies that $\Sigma$ is parabolic uniformly rectifiable, because the other implication had already been shown in a paper of Hoffman, Luna-García, Nystr\"{o}m and the first and third authors \cite[Theorem 1.3]{BHHLN}. We note that our result is of the nature of a free boundary problem: we are showing regularity of the boundary of a domain, given only information about solutions of an associated PDE. 

Theorem~\ref{th:main} is established in two steps, each of independent interest: first, we show that p-CME implies a corona decomposition for caloric measure (Theorem~\ref{th:corona}); second, we prove that such a corona decomposition implies parabolic uniform rectifiability (Theorem~\ref{th:main2}). Roughly speaking, this corona decomposition provides a quantitative multiscale description of caloric measure, asserting that caloric measure and surface measure (equivalently, the Poisson kernel in rough domains) are quantitatively comparable on a large collection of scales and locations, with the exceptional scales satisfying a Carleson packing condition (see Definition~\ref{def:corona}).

\begin{theorem}[Corona decomposition for caloric measure implies p-UR] \label{th:main2}
	Let $\Omega \subset \ree$ be an open set in space-time.	Assume that $\Omega$ satisfies the time-symmetric capacity density condition, and that $\Sigma := \pom$ is parabolic Ahlfors-David regular. If the caloric measure for $\Omega$ admits a corona decomposition as in Definition \ref{def:corona}, then $\Sigma$ is parabolic uniformly rectifiable.
\end{theorem}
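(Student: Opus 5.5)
The plan is to follow the elliptic strategy of Garnett, Mourgoglou and Tolsa, and to replace the elliptic ingredients with parabolic ones. Since the $\beta$-number and big-pieces characterizations are either unavailable or false parabolically, we aim instead at the characterization of p-UR through a corona decomposition of $\Sigma$ by regular Lip(1,1/2) graphs. Concretely, we will show that the dyadic cubes $\mathbb{D}(\Sigma)$ can be partitioned into trees $\mathcal{T}$ with Carleson-packed tops. For each tree we will find a regular parabolic Lipschitz graph $\Gamma_{\mathcal{T}}$, with $\partial_t^{1/2}$ of its parametrization in $\BMO$, that approximates $\Sigma$ within $\epsilon\,\ell(Q)$ at every cube $Q\in\mathcal{T}$.

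The first step is a stopping-time construction. We start from the caloric-measure corona decomposition of Definition~\ref{def:corona}. In each tree with top $Q_0$ we use a pole $X_{Q_0}$, which sits at a time lag above $Q_0$ and is supplied by TSCDC/Hölder continuity. Along the tree, $\omega^{X_{Q_0}}$ and surface measure $\sigma$ are comparable with density in $[C^{-1},C]$. We refine the trees with further stopping rules that detect large oscillation of the density or of averaged Green function gradients, and we use the packing from Definition~\ref{def:corona} to keep the new tops Carleson. Within a refined tree we compare the Green function $G(X_{Q_0},\cdot)$, normalized by $\sigma(Q_0)/\ell(Q_0)^{n+1}$ (up to the ADR normalization), to a function comparable to $\delta_\Omega$. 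To do this we use the Riesz-type formula $\iint G\,(-\partial_t-\Delta)\varphi=\int\varphi\,d\omega$ together with boundary Hölder continuity (again from TSCDC) and ADR. The output should be that in Whitney regions of the tree, $G \approx \delta_\Omega$ and $|\nabla G|\approx 1$ in an averaged sense. The adjoint equation is needed here, since $G(X_{Q_0},\cdot)$ solves the backward heat equation.

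The second step, which is the main obstacle, is to turn this nondegeneracy of $G$ into a graph. In the elliptic case one uses $|\nabla G|\gtrsim 1$ together with small oscillation of $\nabla G$ to show that level sets of $G$ are Lipschitz graphs; this relies on a square-function/Carleson estimate for $\nabla^2 G$. Parabolically we must also control $\partial_t G$ and the half time derivative, since the graph produced by the level sets must be regular in time and not merely Lip(1,1/2). We would first try to derive, from the Carleson bound on caloric-measure density oscillation and interior parabolic estimates, a Carleson estimate of the form
\[
\iint |\nabla^2 G|^2\delta_\Omega + |\partial_t G|^2\delta_\Omega^3 \lesssim \sigma(Q_0).
\]
Using the implicit function theorem on a smoothed level set $\{G=\lambda\,\delta\}$, we would then build a Lip(1,1/2) graph over a hyperplane with time-direction. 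Its parametrization $\psi$ would satisfy a Carleson condition on $|\partial_t \psi|$-type quantities, which gives $\partial_t^{1/2}\psi\in\BMO$, exactly as in the Hofmann–Lewis–Nyström characterization of regular graphs. The time lag complicates matters because the pole must sit above the region where the graph is constructed, so the stopping regions must be time-shifted consistently; we would handle this using the time-symmetric form of the capacity condition.

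Finally, we check that each tree cube lies close to $\Gamma_{\mathcal{T}}$ and conversely, using ADR and corkscrews. This verifies the bilateral approximation. Combined with the packing of tops, it gives a corona decomposition of $\Sigma$ by regular parabolic Lipschitz graphs, which implies p-UR.
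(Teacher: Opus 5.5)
Your overall architecture matches the paper's. The target is a corona decomposition of $\Sigma$ by regular $\Lip(1,1/2)$ graphs, followed by Theorem~\ref{th:corona_implies_UR}, and the graphs come from an adjoint Green function with pole in the future, as in Proposition~\ref{prop:WHSA} (imported from \cite{BHMN}).

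The gap is your second step, which is exactly where the difficulty lies. You assert that the parametrization $\psi$ satisfies a Carleson condition giving $D^t_{1/2}\psi\in\BMO_\parab$, but you supply no mechanism for it. Your displayed estimate cannot provide one:
\begin{itemize}
\item As written it is not scale invariant. The bound $G\lesssim\delta$ already forces $|\partial_t G|\lesssim\delta^{-1}$ by interior estimates, so the left-hand side is automatically $\lesssim\ell(Q_0)^{2}\sigma(Q_0)$ and the estimate says nothing.
\item With the scale-invariant weight $\delta$ in place of $\delta^3$, the time-derivative term is dominated by the $\nabla^2G$ term, since $\partial_tG=-\Delta G$.
\end{itemize}
So an interior square-function bound of this type carries no additional time information. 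The real problem is transferring interior information to the boundary parametrization. On the contact set, $\psi$ coincides with $\Sigma$ at all small scales, and the time regularity of $\Sigma$ is precisely what is unknown; a smoothed level set at a fixed height does not see those scales. Even when $\Omega$ itself is a $\Lip(1,1/2)$ graph domain, passing from good behaviour of caloric measure or of the Green function to regularity of the graph is the main theorem of \cite{BHMN_graph}. It is not an implicit-function-theorem computation.

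The missing idea is to verify a PDE property on the approximating graph domains and then use the graph theory as a black box. The paper proves p-CME in $\Omega_{\S^*}$ (Theorem~\ref{th:push_cme}) in two regions:
\begin{itemize}
\item In thin strips over the Whitney cubes $I_i$ off the contact set, $\psi_{\S^*}$ agrees locally with regular graphs (Lemma~\ref{lem:DDH}), so Lemma~\ref{lem:ur_implies_cme} applies.
\item Above $\psi_{\S^*}+c_2D$, one has $\delta_{\Omega_{\S^*}}\approx\delta_\Omega\lesssim_\varepsilon\widehat u$. The identity $\iiint|\nabla u|^2\widehat u\,\Psi_N=\frac12\iiint u^2H^*(\widehat u\Psi_N)$ then reduces everything to the upper bounds $|\nabla\widehat u|+\widehat u/\delta_\Omega\lesssim1$ of Lemma~\ref{lem:u_upper_bounds}.
\end{itemize}
Then \cite{HW} and \cite{BHMN_graph} give regularity of the graphs.

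There is a second gap. The new tops produced by stopping on oscillation of $\nabla G$ are \emph{not} packed by Definition~\ref{def:corona}. In the paper, following \cite[Section~11]{BHMN}, their packing is derived only after regularity is known. It uses local $N\lesssim S$ estimates in regular graph domains together with the square function for $\nabla^2\widehat u$. Your order (pack, then build graphs, then prove regularity) is therefore circular as written.

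Two minor points:
\begin{itemize}
\item This theorem does not assume interior corkscrews, and Definition~\ref{def:unilateralcorona} only needs one-sided approximation. Your bilateral check via corkscrews is therefore both unavailable and unnecessary.
\item Lower bounds $G\gtrsim\delta$ and $|\nabla G|\gtrsim1$ are obtained only in selected Whitney components, not in all Whitney regions of a tree.
\end{itemize}
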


The preceding results yield the following equivalence, which shows that parabolic uniform rectifiability admits three equivalent descriptions: a geometric one, a measure-theoretic one through caloric measure, and a purely interior PDE one through Carleson measure estimates.

\begin{theorem}[p-CME $\iff$ corona for caloric measure $\iff$ p-UR] \label{th:all}
	Let $\Omega \subset \ree = \R^n \times \R$ be an open set in space-time. Assume that $\Omega$ satisfies the interior corkscrew condition, the time-symmetric capacity density condition, and that $\Sigma := \pom$ is parabolic Ahlfors-David regular. Then, the following are equivalent:
	\begin{enumerate}
		\item p-CME holds in $\Omega$.
		\item The caloric measure admits a corona decomposition as in Definition \ref{def:corona}.
		\item $\Sigma$ is parabolic uniformly rectifiable.
	\end{enumerate}
\end{theorem}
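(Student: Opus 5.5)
Theorem~\ref{th:all} will follow by chaining the implications already isolated in the introduction: we prove (1) $\Rightarrow$ (2) $\Rightarrow$ (3) $\Rightarrow$ (1). The background hypotheses (interior corkscrew, TSCDC, p-ADR) are the same throughout, and each step uses at most these.

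\emph{(1) $\Rightarrow$ (2).} This is Theorem~\ref{th:corona}, stated later in the paper. It says that p-CME in $\Omega$ yields a corona decomposition for caloric measure in the sense of Definition~\ref{def:corona}.

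\emph{(2) $\Rightarrow$ (3).} This is Theorem~\ref{th:main2}. It needs only TSCDC and p-ADR, so it applies a fortiori under our hypotheses.

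\emph{(3) $\Rightarrow$ (1).} This is \cite[Theorem 1.3]{BHHLN}. The one point to check is that its hypotheses are implied by ours: p-UR of $\Sigma$, p-ADR, and whatever interior conditions that paper uses (corkscrew/TSCDC). If \cite{BHHLN} works in $\ree\setminus\Sigma$, or under slightly different standing assumptions, we will verify that the Carleson estimate transfers to $\Omega$. The argument is standard: a bounded caloric function in $\Omega$ is handled by the same square-function/$\varepsilon$-approximability machinery, since only local interior estimates and p-UR of the boundary are used.

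The real content lies in Theorems~\ref{th:corona} and \ref{th:main2}. Here the proof reduces to bookkeeping of hypotheses, and the only potential obstacle is matching the assumptions of \cite{BHHLN} with the present ones.
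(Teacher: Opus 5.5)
Your proposal is correct and is exactly the paper's proof: (1)$\Rightarrow$(2) is Theorem~\ref{th:corona}, (2)$\Rightarrow$(3) is Theorem~\ref{th:main2}, and (3)$\Rightarrow$(1) is \cite[Theorem 1.3]{BHHLN}, with the hypotheses matching as you note. The one point you flag for (3)$\Rightarrow$(1) is settled more simply than by rerunning the square-function machinery: if the cited result is phrased in $\mathbb{R}^{n+1}\setminus\Sigma$, extend a bounded caloric $u$ on $\Omega$ by zero on $\mathbb{R}^{n+1}\setminus\overline{\Omega}$. This extension is caloric on $\mathbb{R}^{n+1}\setminus\Sigma$, because $\Sigma=\partial\Omega$ makes that set the disjoint union of the open sets $\Omega$ and $\mathbb{R}^{n+1}\setminus\overline{\Omega}$. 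It has the same sup norm as $u$, and $\delta_\Omega\approx\dist(\cdot,\Sigma)$ on $\Omega$, so the Carleson estimate transfers directly.
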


As mentioned above, (3) $\implies$ (1) was shown in \cite[Theorem 1.3]{BHHLN}, and we prove (1) $\implies$ (2) and (2) $\implies$ (3) in Theorems~\ref{th:corona} and \ref{th:main2}. We emphasize that the additional background assumptions in Theorem~\ref{th:all} are only needed for the implications proved in the present paper, since (3) $\implies$ (1) holds under the sole assumption of p-ADR, as in the elliptic theory (see \cite[Appendix A]{AHMMT}).

More broadly, Theorem~\ref{th:all} provides several equivalent perspectives on parabolic uniform rectifiability. Analogous characterizations have played a central role in the development of the elliptic theory, and we expect them to provide a flexible framework for further investigations of caloric measure and boundary value problems in rough space-time domains.

Although Theorem~\ref{th:all} mirrors its elliptic counterpart, the proofs of the implications (1) $\implies$ (2) and (2) $\implies$ (3) require several new ideas reflecting the genuinely parabolic nature of the problem:
\begin{itemize}
	\item First, we construct caloric functions whose oscillations are localized at prescribed scales in Section~\ref{sec:GMT}. The construction is inspired by the elliptic argument of \cite{GMT}, but its extension to the parabolic setting requires new ideas. In particular, we introduce a geometric construction that simultaneously permits repeated applications of the time-directed interior Harnack inequality while approaching the rough boundary.
	
	\item  Second, we develop a stopping-time argument involving two simultaneous poles for caloric measure in Section~\ref{sec:corona}. Indeed, the construction in Section~\ref{sec:WHSA} requires poles to be sufficiently far into the future, and the argument in Section~\ref{sec:GMT} requires them to be very close to the boundary. Since no single pole can satisfy both requirements, we overcome this obstacle by running the stopping-time argument simultaneously with two poles.
	
	\item Third, we establish the p-CME property for the approximating subdomains constructed in Section~\ref{sec:WHSA}. This requires combining local regularity theory for parabolic Lipschitz graphs with a careful quantitative analysis of stopping-time distances, along with a delicate integration by parts argument adapted to rough geometries.
	
	\item A further important feature of our approach is that Theorem~\ref{th:all} is established under substantially weaker background assumptions than those in \cite{BHMN}: we assume only the TSCDC and parabolic (non time-directed) ADR, rather than the stronger time-symmetric ADR condition. The TSCDC is a quantitative analogue of the classical Wiener criterion for the heat equation \cite{EG_Wiener} and is essentially optimal among assumptions ensuring Hölder continuity of caloric functions up to the boundary, just as the corresponding quantitative Wiener condition is in the elliptic setting \cite{Aikawa,CHMPZ}. The remaining background assumption, i.e. the interior corkscrew condition, is already required in the elliptic analogue of our results (see \cite[Remark 5.17]{CHM}). 
\end{itemize} 


\subsection{Historical context} 

Assuming that $\Omega$ lies above a parabolic Lipschitz graph $\psi$, the condition that $D_t^{1/2} \psi \in \BMO_\parab$ (see Subsection~\ref{subsec:UR}) was found to be sufficient for the solvability of the heat equation with boundary data in $L^p$ by Lewis and Murray in \cite{LM}. This was inspired by earlier work of Murray in \cite{M}, and the sharp exponent $p=2$ was obtained shortly after by Lewis and the third author in \cite{HL} under an additional smallness assumption. It took 30 more years to see that this condition is also necessary for solvability of the $L^p$ Dirichlet problem for some $p > 1$, which is the recent work of Martell, Nystr\"{o}m and the first and third authors in \cite{BHMN_graph}. More recently,  in the non-graphical setting, the necessity of parabolic uniform rectifiability of the boundary for $L^p$ solvability of the heat equation has been obtained in \cite{BHMN}. The graphical result \cite{BHMN_graph} was also extended to some suitable variable coefficient operators in \cite{BFHH}. There are several other relevant works dealing with non-graphical settings and/or variable coefficient equations, like \cite{HL01, HL05, DPP, NS, BHHLN_22b, GH_RH, GH_BMO}.

In the elliptic setting, the characterization of uniform rectifiability by Carleson measure estimates was completed through the works of Martell, Mayboroda and the third author \cite{HMM} and Garnett, Mourgoglou and Tolsa \cite{GMT}. The present paper completes the analogous program for the heat equation: we establish the parabolic analogue of \cite{GMT}, namely the free-boundary implication p-CME implies p-UR, whereas the converse implication was obtained by Hoffman, Luna-García, Nystr\"{o}m and the first and third authors in \cite{BHHLN}.

Nevertheless, the proof is not a direct adaptation of \cite{GMT}. While our construction of the corona decomposition is inspired by that work (yet it requires several nontrivial adaptations), the remainder of the argument follows a fundamentally different philosophy. In \cite{GMT}, the corona decomposition for harmonic measure is converted into uniform rectifiability through the boundedness of suitable Riesz transforms and the deep theorem of Nazarov, Tolsa and Volberg \cite{NTV}. No analogous machinery is currently available in the parabolic setting, and recent work of Hoffman and Jaye \cite{HJ} suggests that such an approach would face significant additional obstacles. Moreover, the alternative elliptic proof of \cite{AGMT}, which avoids singular integrals, is likewise unavailable because it relies on characterizations of elliptic uniform rectifiability that are known to fail in the parabolic setting \cite[Observation 4.19]{BHHLN_22a}. 

Consequently, after constructing the corona decomposition, we pursue a completely different, more geometric strategy inspired by Le, Martell, Nystr\"om and the third author \cite{HLMN} (in turn based on work of Lewis and Vogel \cite{LV1, LV2}), relying on explicit approximating Lipschitz graphs obtained through Green functions. This approach was recently adapted to the parabolic setting by Martell, Nyström and the first and third authors \cite{BHMN}, and our work builds on those ideas.

Finally, we remark that the time-symmetric capacity density condition is not merely a technical assumption, but rather arises naturally in the theory of boundary regularity for parabolic equations. It is closely related to the Wiener criterion for the heat equation established in \cite{EG_Wiener} (building on \cite{L} and the original elliptic \cite{Wiener}), with important extensions to variable-coefficient operators in \cite{GL,FGL}. Quantitative versions of these capacitary conditions have recently played a central role in several problems involving rough domains \cite{MP,HHK}.


\subsection{Outline of the proof and the paper}

The proof of Theorem~\ref{th:all} is divided into four main steps. See also Section~\ref{sec:proof} and Figure~\ref{fig:roadmap}. 

\begin{figure}[h]
	\centering
	\resizebox{\textwidth}{!}{%
		\begin{tikzpicture}[
			box/.style={draw, rounded corners, align=center, minimum width=2.8cm, minimum height=1.3cm,
				font=\small, text width=2.5cm},
			side/.style={draw, rounded corners, align=center, minimum width=2.9cm, minimum height=1.3cm,
				font=\small, fill=gray!8, text width=2.6cm},
			arr/.style={-Latex, thick},
			node distance=0.7cm
			]
			\node[box] (h) {p-CME holds in $\Omega$ \\ \scriptsize (hypothesis)};
			\node[box, right=of h] (c) {Corona decomp.\ for $\omega$ \\ \scriptsize (Section \ref{sec:corona})};
			\node[box, right=of c] (g) {Approx. by \ Lip$(1,1/2)$ subdomains \\ \scriptsize (Section \ref{sec:WHSA})};
			\node[box, right=of g] (p) {p-CME on subdomains \\ \scriptsize (Section \ref{sec:pushing_CME})};
			\node[box, right=of p] (u) {$\Sigma$ is p-UR \\ \scriptsize (Section \ref{sec:proof})};
			
			\node[side, above=0.7cm of c] (osc) {Oscillating solutions \\ \scriptsize (Section \ref{sec:GMT})};
			\node[side, above=0.7cm of u] (reg) {Graph regularity \\ \scriptsize \cite{HW, BHMN_graph}};
			
			\draw[arr] (h) -- (c);
			\draw[arr] (c) -- (g);
			\draw[arr] (g) -- (p);
			\draw[arr] (p) -- (u);
			\draw[arr] (osc) -- (c.north);
			\draw[arr] (reg) -- (u.north);
	\end{tikzpicture}}
	\caption{Proof roadmap for Theorem~\ref{th:main}.}
	\label{fig:roadmap}
\end{figure}

We first show that the p-CME assumption yields a corona decomposition for caloric measure (Theorem~\ref{th:corona}). The main difficulty is to control the low-density cubes, i.e. those at which caloric measure $\omega$ is much smaller than surface measure $\sigma$. Following the strategy of Garnett, Mourgoglou and Tolsa \cite{GMT}, we construct caloric functions oscillating at prescribed scales, but the construction requires substantial new ideas in the parabolic setting (Proposition~\ref{prop:GMT}). A further difficulty is that later on in the paper, we require poles to lie far into the future, whereas the construction of oscillating solutions requires poles close to the boundary. This is overcome by introducing the two-pole stopping-time argument described in Section~\ref{sec:corona}.

After having established this corona decomposition for $\omega$ with respect to $\sigma$, we adapt the geometric approximation machinery developed in \cite{BHMN} to construct very flat Lip(1,1/2) graphs which approximate $\Sigma$ from the interior of $\Omega$ (that is, some parabolic version of the so-called WHSA property, but with the additional property of the graphs being very flat). This is the content of Proposition~\ref{prop:WHSA}. 

The main remaining obstacle is to prove that these approximating graphs are \textbf{regular} Lip(1,1/2). Unlike in \cite{BHMN}, we cannot rely on the $A_\infty$ property of caloric measure. Instead, we establish p-CME on the approximating subdomains in Section~\ref{sec:pushing_CME}, using a new argument based on the corona decomposition, quantitative control of stopping-time distances, and an integration by parts argument.

Once p-CME has been established in the approximating subdomains, the recent characterization of the third and fourth authors \cite{HW} in graphical domains yields the $A_\infty$ condition. We may then invoke the graphical characterization of \cite{BHMN_graph} to conclude that the approximating graphs are regular, completing the proof of Theorem~\ref{th:all}. The graphical nature of the approximating domains is therefore essential in the final step.

Many constants will appear along the proofs. We summarize their dependencies in Remark~\ref{rem:dependencies}.



\section{Notation and preliminaries} \label{sec:preliminaries}

\subsection{Notation} \label{sec:notation}

\begin{itemize}
	
	\item We shall use \textbf{lower case letters} to denote (the spatial component of) points over boundaries (like $\partial\Omega$), and \textbf{capital letters} for generic points in $\R^{n+1}$ (in particular those in open sets like $\Omega$). Time coordinates will always be written in lower case.
	
	\item We also use \textbf{boldface letters} to denote points and sets in space-time $\R^{n+1} = \Rn \times \R$ to alleviate notation when the argument does not really require to distinguish the spatial and time coordinates of the point of interest, as in 
	\[\qquad\quad  \mbf{X}=(X,t), \, \Y = (Y, s) \in \Rn \times \R,
	\quad\mathrm{and}\quad
	\mbf{x}=(x,t), \, \mbf{y} = (y, s) \in \pom \subset \Rn \times \R.\]
	Similarly, we also denote the space-time origin by $\mbf{0} = (0, 0) \in \Rn \times \R$. 
	
	\item In Section~\ref{sec:pushing_CME}, there will be a further decomposition of the spatial variables because the first one will be distinguished (graphs will be parametrized in this direction), and we will follow the above conventions, but thinking on the coordinates 
	\begin{equation*}
		(x_0, x, t) \in \R \times \R^{n-1} \times \R = \R^{n+1}.
	\end{equation*}
	
	\item We denote solid integrals, taken over subsets of space-time $\R^{n+1}$, by $\iiint$. Integrals taken over a codimension 1 object, like a boundary of an open set or the last coordinates in $\R^{n-1} \times \R$ in Section~\ref{sec:pushing_CME}, will be denoted by $\iint$. Lastly, we reserve the notation $\int$ for integrals taken over only one of these variables. The symbols $\fint, \bariint$ denote averages.
	
	\item Given a time $t \in \R$, we denote the restriction to the past/future/present by 
	\[
	\qquad 
	\T_{< t} := \Rn \times (-\infty, t), 
	\qquad 
	\T_{= t} := \Rn \times \{t\}, 
	\qquad 
	\T_{> t} := \Rn \times (t, +\infty).
	\]
	
	\item Given $A\subset\R^{n+1}$, we denote initial and terminal times by 
	\[
	\qquad\;\;
	T_{\min}(A)
	:= \inf \big\{t \in \R: A\cap \T_{=t} \neq\emptyset\big\}, 
	\quad
	T_{\max}(A):=\sup\big\{t \in \R: A\cap \T_{=t} \neq\emptyset \big\},
	\]
	which may take infinite values.
	
	\item (Parabolic distance)
	Given $(X,t)\in\R^{n}\times\R$ in space-time, we say its \textit{parabolic norm} is
	\[\lVert (X, t) \rVert : = \max \big\{ |X|, |t|^{1/2} \big\}, \]
	although other equivalent choices like $|X| + |t|^{1/2}$ would lead to the very same conclusions.
	The distance induced by this norm will be called \textit{parabolic distance}. Further, given $A\subset\R^{n+1}$, its diameter with respect to the parabolic distance is
	\[\diam_{\text{par}}(A):=\sup_{(\mbf{X},\mbf{Y})\in A\times A}\lVert\mbf{X}-\mbf{Y}\rVert.\]
	
	\item (Space-time cubes) 
	Given $(X,t)\in\R^{n}\times\R$ in space-time and $r>0$, we define the (space-time) \textit{parabolic cube} centered at $(X, t)$ with radius $r$ by
	\[\C_r(X,t):=\big\{(Y,s)\in\R^{n}\times\R:\lVert(X,t)-(Y,s)\rVert<r\big\}.\]
	Further, we define the \textit{time-backward} and \textit{time-forward} parabolic cubes by
	\[\C_r^-(X,t):= \C_r(X, t) \cap \T_{< t}, \qquad 
	\C_r^+(X,t):= \C_r(X, t) \cap \T_{> t}.\]
	
	In Section~\ref{sec:pushing_CME}, we will work in $\R^n = \R^{n-1} \times \R$, where the first spatial coordinate of $\R^{n+1}$ is deleted. In this case, we denote cylinders by 
	\[
	C'_r(x, t) := \big\{(y,s)\in\R^{n-1}\times\R:\lVert(x,t)-(y,s)\rVert<r\big\},
	\]
	where $(x, t) \in \R^{n-1} \times \R$ and $r > 0$.
	
	\item (Purely spatial cubes) 
	We also use the notation of cubes for purely spatial regions around $X \in \Rn$ (note the lack of boldface fonts): 
	\[
	C_r(X) := \big\{ Y\in\R^n: | X-Y | <r \big\}.
	\]
	
	\item  If $A \subset \R^{n+1}$, we denote the (parabolic) distance to its boundary by
	\begin{equation*}
		\delta_A(\mbf{X}) 
		:=
		\inf \big\{ r > 0 : \overline{\C_r(\mbf{X})} \cap \partial A \neq \emptyset \},
		\qquad \mbf{X} \in \R^{n+1}.
	\end{equation*}
	In the special case that we consider our distinguished open set $\Omega$, we abbreviate 
	\begin{equation*}
		\delta(\mbf{X}) := \delta_\Omega(\mbf{X}) \approx \dist(\mbf{X}, \Sigma), 
		\qquad \mbf{X} \in \R^{n+1}.
	\end{equation*}
	
	\item We denote by $\Gamma$ the heat kernel (the fundamental solution of the heat equation), namely
	\begin{equation} \label{eq:heat_kernel}
		\Gamma(X, t; Y, s)
		=
		(4 \pi (t-s))^{-n/2} e^{-\frac{\abs{X-Y}^2}{4(t-s)}} \mathbf{1}_{t > s},
		\qquad (X, t), (Y, s) \in \R^{n+1},
	\end{equation}	
	which concretely satisfies (see e.g. \cite[(A.8)]{GH_RH}, it is an easy real variable exercise)
	\begin{equation} \label{eq:bound_gamma}
		\Gamma(X, t; Y, s)
		\lesssim 
		\norm{(X, t) - (Y, s)}^{-n}.
	\end{equation} 
	
	\item The symbol $\nabla$ denotes the gradient only in the spatial variables, and $\Delta$ is the Laplacian in the spatial variables only, too, i.e. $\Delta = \sum_{j=1}^n \partial_{x_j x_j}$.
\end{itemize}


\subsection{Geometric framework} \label{subsec:geometry}

The two underlying geometric assumptions along the whole paper, and concretely in Theorem~\ref{th:main}, are that $\Sigma$ is p-ADR and $\Omega$ satisfies the TSCDC. These assumptions are classical in the area, and ensure that $\Sigma$ has ``codimension 1'' in a quantitative way, and that the complement of $\Omega$ is thick enough in order for caloric functions to behave well near the boundary. However, in the parabolic setting, the definitions are very delicate because of the special role of time, so let us introduce all the necessary concepts.

\begin{definition}[Parabolic Hausdorff measure and	dimension] \label{def:Hausdorff}
	Given $\alpha \geq 0$, we define a \textit{parabolic Hausdorff measure of {homogeneous}
	dimension $\alpha$}, denoted $\H_{\text{par}}^\alpha$, by first setting, for $\delta>0$ and $E\subset \R^{n+1}$,
	\begin{equation*}
		\H_{\text{par},\delta}^\alpha(E)
		:= 
		\inf 
		\left\{ \sum_{k \in \N} (\diam_{\text{par}}(E_k))^\alpha : E \subset \bigcup_{k \in \N} E_k, \; \diam_{\text{par}}(E_k)\leq \delta \text{ for all } k \in \N \right\},
	\end{equation*}
	where diameters are measured with respect to the parabolic distance\footnote{
		That is, this parabolic Hausdorff measure is defined in the same way that one defines standard (Euclidean) Hausdorff measure, but instead using the {parabolic} distance.
	}, and then we define
	\begin{equation*}
		\H_{\text{par}}^\alpha (E) 
		:= 
		\lim_{\delta\to 0^+} \H_{\text{par},\delta}^\alpha(E)\,.
	\end{equation*}
	As is the case of classical Hausdorff measure, $ \H_{\text{par}}^\alpha$ is a Borel regular measure.
	We refer the reader to \cite[Chapter 2]{EG} for a discussion of the basic properties of standard Hausdorff measure, which readily adapt to the parabolic counterpart $\H_{\text{par}}^\alpha$.
	In particular, one obtains a measure equivalent to $\H_{\text{par}}^\alpha$ if one defines $\H_{\text{par},\delta}^\alpha$ in terms of coverings by parabolic cylinders, rather than
	arbitrary sets of parabolic diameter at most $\delta$.  As in the classical setting, we define the \textit{parabolic homogeneous dimension} of a set $A\subset \R^{n+1}$ by
	\[\dim_{\H_{\text{par}}}(A):= \inf \left\{ 0\leq \alpha<\infty: \,\H_{\text{par}}^\alpha(A)=0\right\}.
	\]
	We observe that $\dim_{\H_{\text{par}}}(\ree)=n+2$.
\end{definition}

\begin{definition}[Surface measure]
	Given a closed set $\Sigma \subset \R^{n+1}$, we define a \textit{surface measure} on $\Sigma$ as the restriction of $\H_{\text{par}}^{n+1}$ to $\Sigma$, i.e.,
	\begin{equation*} 
		\sigma = \sigma_\Sigma:=  \H_{\text{par}}^{n+1}|_\Sigma\,.
	\end{equation*}
\end{definition}

\begin{definition}[Ahlfors-David regular, p-ADR] \label{def:ADR}
	We say that the closed set $\Sigma \subset \R^{n+1}$ is (parabolic) \textit{Ahlfors-David regular} (p-ADR for short) if there exists $C_{\mathrm{ADR}} \geq 1$ such that
	\begin{equation*}
		C_{\mathrm{ADR}}^{-1} r^{n+1} \leq \sigma(\C_r(\mbf{x}) \cap \Sigma) \leq C_{\mathrm{ADR}} r^{n+1}, 
		\qquad 
		\forall \, \mbf{x} \in \Sigma, \, 0 < r < \diam(\Sigma).
	\end{equation*}
\end{definition}

\begin{remark}
	Under the ADR assumption, there are other equivalent ways to define surface measure. For instance, one that has been frequently used in the literature consists in defining a product-like measure by measuring time with $\H^1$ (Euclidean Hausdorff measure), and using $\H^{n-1}$ (again Euclidean Hausdorff measure) in each time slice. For more details and a rigorous proof of the equivalence, we refer to Remark 2.8 and Appendix B in \cite{BHHLN_corona}.
\end{remark}

The ADR property makes $\Sigma$ a space of homogeneous type, whence classical constructions in harmonic analysis give a very handy decomposition of $\Sigma$ on nested cubes, that resemble the standard dyadic cubes in the Euclidean space. The following construction follows those from \cite{Christ, DS1, DS2, HytKai}.

\begin{lemma}[Dyadic cubes] \label{lem:cubes}  
	Assume that $\Sigma  \subset \mathbb R^{n+1}$ is (parabolic) ADR, with constant $C_{\mathrm{ADR}}$. Then $\Sigma$ admits a parabolic dyadic decomposition in the sense that there exists $C_{\mathrm{cub}} > 0$ depending only on $n$ and $C_{\mathrm{ADR}}$, such that the following holds. There exists a grid of cubes 
	\begin{equation*}
		\mathbb{D}= \bigcup_{k \in \Z} \D_k,
	\end{equation*}
	where each generation $\D_k$, $k \in \Z$, is composed of (dyadic) cubes
	\begin{equation*}
		\mathbb{D}_k:=\big\{ Q_{j}^k\subset \Sigma: j\in \mathfrak{I}_k \big\},
	\end{equation*}
	 ($\mathfrak{I}_k$ is just the countable set of indices) such that the following hold:
	\begin{list}{$(\theenumi)$}{\usecounter{enumi}\leftmargin=1cm \labelwidth=1cm \itemsep=0.2cm \topsep=.2cm \renewcommand{\theenumi}{\roman{enumi}}}
		\item (each generation covers $\Sigma$) $\Sigma=\bigcup_{j \in \mathfrak{I}_k} Q_{j}^k\,$ for each $k\in{\mathbb Z}$,
		
		\item (cubes are nested) if $m \geq k$, then either $Q_{i}^{m}\subset Q_{j}^{k}$ or $Q_{i}^{m}\cap Q_{j}^{k}=\emptyset$,
		
		\item (ancestors are unique) for each $m > k \in \Z$ and $i \in \mathfrak{I}_m$, there is a unique $j \in \mathfrak{I}_k$ such that $Q_{i}^m\subset Q_{j}^k$,
		
		\item (size within a generation is $\approx 2^{-k}$) for each $k \in \Z$ and $j \in \mathfrak{I}_k$, it holds  \begin{equation*}
			\diam\big(Q_{j}^k\big)\leq C_{\mathrm{cub}} 2^{-k},
		\end{equation*}
		and there exists $\mbf{x}^k_{j} \in \Sigma$, which we call \textit{center of} $Q_k^j$, such that 
		\begin{equation*}
			\C_{C_{\mathrm{cub}}^{-1} 2^{-k}} (\mbf{x}^k_j) \cap \Sigma \subset Q_j^k.
		\end{equation*}
		In fact, we define 
		\begin{equation*}
			\ell(Q_k^j) := 2^{-k}, 
		\end{equation*}
		so that it holds $C_{\mathrm{cub}}^{-1} \ell(Q_k^j) \leq \diam(Q_k^j) \leq C_{\mathrm{cub}} \ell(Q_k^j)$.
	\end{list}
\end{lemma}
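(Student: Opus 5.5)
The plan is to run the Christ / Hytönen–Kairema construction in the metric measure space $(\Sigma, d_{\parab}, \sigma)$, where $d_{\parab}$ is the parabolic distance. First I would check that this is a space of homogeneous type. The parabolic norm gives a genuine metric: $|t|^{1/2}$ is subadditive, and the maximum of two metrics is a metric. The p-ADR bound of Definition~\ref{def:ADR} then gives doubling of $\sigma$ on parabolic balls, namely $\sigma(\C_{2r}(\mbf{x})\cap\Sigma) \le 2^{n+1}C_{\mathrm{ADR}}^2\, \sigma(\C_r(\mbf{x})\cap\Sigma)$ for $r<\diam(\Sigma)/2$. At larger scales there are two cases. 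If $\Sigma$ is bounded, every ball of radius $\ge \diam \Sigma$ is all of $\Sigma$, so doubling is automatic there. If $\Sigma$ is unbounded, ADR holds at all scales.

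Next, for each $k\in\Z$ I would choose a maximal $2^{-k}$-separated net $\{\mbf{x}^k_j\}_{j\in\mathfrak{I}_k}\subset\Sigma$, using Zorn's lemma, or a countable greedy selection since $\Sigma$ is separable. I want these nets to be nested, $\{\mbf{x}^k_j\}\subset\{\mbf{x}^{k+1}_j\}$, which I get by extending each net to the next finer one. Then I define a parent relation: each point of generation $k+1$ is assigned to one nearest point of generation $k$, with ties broken by a fixed well-ordering. This gives a tree. The cube $Q^k_j$ is then built as in Christ/Hytönen–Kairema: take the closure (or a suitable half-open version) of the set of all descendants of $\mbf{x}^k_j$, and remove overlaps by the tie-breaking order so that each generation is a partition of $\Sigma$. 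With this construction, properties (i)–(iii) are formal consequences of the tree structure. Each generation partitions $\Sigma$, cubes of different generations are nested or disjoint, and ancestors are unique.

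For (iv), the diameter bound comes from a geometric series. A descendant at generation $m>k$ lies within $\sum_{i\ge k}2^{-i}\le 2^{1-k}$ of $\mbf{x}^k_j$, so $\diam Q^k_j\le 4\cdot 2^{-k}$. The inner ball needs a little more care. Take a point $\mbf{y}$ within $c\,2^{-k}$ of $\mbf{x}^k_j$, with $c$ small, and trace its chain of nearest net points back to generation $k$. Each step moves the point by at most $2^{-i}$, while the separation between distinct generation-$k$ points is $2^{-k}$. So the chain's generation-$k$ ancestor must be $\mbf{x}^k_j$, provided the net points at generation $k+1$ near $\mbf{x}^k_j$ choose it as their parent. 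The standard way to guarantee this is to define the parent as the nearest point, which forces points within $2^{-k}/2$ to choose $\mbf{x}^k_j$. Summing the tail then gives the bound with $c=1/8$, say.

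I expect this inner-ball step to be the main obstacle. Boundary effects arising from the closure or half-open conventions can threaten the containment $\C_{c2^{-k}}(\mbf{x}^k_j)\cap\Sigma\subset Q^k_j$. I would handle this exactly as in \cite{HytKai}, whose construction proves the inner-ball property in any geometrically doubling quasi-metric space. Geometric doubling of $\Sigma$ follows from ADR by a volume-counting argument. Finally, ADR is used to guarantee that the constants depend only on $n$ and $C_{\mathrm{ADR}}$, and to set $\ell(Q)=2^{-k}$.
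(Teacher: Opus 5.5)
Your overall route is the one the paper intends: the paper gives no proof and simply cites Christ, David--Semmes and Hyt\"onen--Kairema. Your treatment of the metric, of geometric doubling from p-ADR, and of the diameter upper bound is fine. There is, however, a genuine gap in the inner-ball part of (iv), which is the step you flagged as the obstacle.

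\textbf{Why the inner-ball argument fails at ratio $1/2$.} A maximal $2^{-i}$-separated net only has covering radius $<2^{-i}$. So the ancestor chain of a net point approximating $\mbf{y}$ can travel up to $\sum_{i>k}2^{-i}=2^{-k}$ before it reaches generation $k+1$. The generation-$(k+1)$ ancestor is therefore only known to lie within $(1+c)2^{-k}$ of $\mbf{x}^k_j$. Your nearest-point rule needs it within $2^{-k}/2$, so no $c>0$ works, in particular not $c=1/8$.

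This is a real failure, not just a lossy estimate. Take $n=2$ and $\Sigma=\{X_1=0\}$, identified with $\R_x\times\R_t$ with metric $\max\{|x-y|,|t-s|^{1/2}\}$; this set is p-ADR. Use the nested maximal nets $N_l=2^{-l}\Z\times 4^{-l}\Z$.
\begin{itemize}
\item The point $(2^{-m},0)\in N_m$ has exactly two nearest points in $N_{m-1}$, namely $(0,0)$ and $(2^{-(m-1)},0)$.
\item Suppose your well-ordering lists $(2^{-j},0)$, $j\ge k$, before $(0,0)$. Then for every $m>k$ the chain is $(2^{-m},0)\mapsto(2^{-(m-1)},0)\mapsto\cdots\mapsto(2^{-k},0)$.
\item So points converging to $(0,0)$ lie in the generation-$k$ cube of $(2^{-k},0)$. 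The cube attached to $(0,0)$ then contains no set $\C_{c2^{-k}}((0,0))\cap\Sigma$.
\end{itemize}
Deferring to Hyt\"onen--Kairema does not rescue ratio $1/2$: their construction assumes $12C_0\delta\le c_0$ in a metric space. For maximal nets $c_0=C_0$, so this forces $\delta\le 1/12$. The closure versus half-open conventions are not the real issue.

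\textbf{Repair.} This is standard.
\begin{itemize}
\item Run the construction at ratio $\delta=2^{-M}$ for a fixed $M$; $M=4$ satisfies the Hyt\"onen--Kairema hypothesis.
\item Your nearest-parent argument now closes. The tail is $\sum_{i>j}\delta^i=\delta^j/15$, and $1/4+1/15<1/2$, so $c=1/4$ works.
\item With a small margin, every net point near $\mbf{y}$ has its chain ending at $\mbf{x}^j_\alpha$. This also keeps $\mbf{y}$ out of the closures of the other generation-$j$ cubes.
\item Then reindex by $\D_k:=\D^{\delta}_{\lfloor k/M\rfloor}$, so each $\delta$-generation is repeated $M$ times. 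Properties (i)--(iii) survive, and the factor $2^M$ is absorbed into $C_{\mathrm{cub}}$.
\end{itemize}

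\textbf{Two smaller points.} First, extending each net to the next finer one only gives nestedness for $k\ge k_0$. To cover all $k\in\Z$ when $\Sigma$ is unbounded you would also have to coarsen, which worsens the covering constant. Nestedness is not needed by Christ or Hyt\"onen--Kairema anyway. Second, removing overlaps by the tie-breaking order separately in each generation does not automatically give (ii). A boundary point can be assigned to one cube at generation $k$ and to a child of a different cube at generation $k+1$. Handling this is exactly what the Hyt\"onen--Kairema construction of half-open cubes does, so cite it there rather than calling (ii) a formal consequence of the tree.
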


We will also need that our domain is open in some quantitative sense, as expressed by the following classical condition. 

\begin{definition}[Interior corkscrews] \label{def:corkscrew}
	Given an open set $\Omega \subset \R^{n+1}$, we say that $\Omega$ satisfies the \textit{interior corkscrew condition} if there exists $C_{\mathrm{CKS}} \geq 1$ such that if $\mbf{x} \in \Sigma := \pom$ and $r > 0$, there exists $\mbf{Y} \in \Omega$ such that 
	\begin{equation*}
		\C_{r / C_{\mathrm{CKS}}} (\mbf{Y}) \subset \Omega \cap \C_r (\mbf{x}).
	\end{equation*}
\end{definition}


\subsection{Parabolic potential theory}

On the other hand, we want the exterior of $\Omega$ to be sufficiently thick in order for caloric functions to decay appropriately when approaching the boundary. Instead of relying on the strong time directed versions of ADR (TBADR, TFADR, TSADR, see e.g. \cite{GH_RH}), we will use much weaker time directed capacitary assumptions for this (whose power has been shown recently e.g. in \cite{MP} or \cite{HHK}), which we define next.

\begin{definition}[(Thermal) capacity] \label{def:capacity}
	Given a Borel measure $\mu$ on $\R^{n+1}$, we define its \textit{heat potential} by
	\begin{equation*}
		\Gamma \mu(\mbf{X}) 
		:=
		\iiint_{\R^{n+1}} \Gamma(\mbf{X}; \mbf{Y}) \, d\mu(\mbf{Y}),
	\end{equation*}
	where $\Gamma$ is the heat kernel (recall \eqref{eq:heat_kernel}). Then, given a compact set $K \subset \R^{n+1}$, we define its thermal capacity to be
	\begin{equation*}
		\capac(K)
		:=
		\sup \big\{ \mu(K) : \; \Gamma \mu \leq 1 \text{ in } \R^{n+1}, \; \mu \geq 0, \; \textrm{spt}(\mu) \subset K \big\}.
	\end{equation*}
\end{definition}

With this, we can define our main assumption regarding the thickness of the complement of our domain of interest. It will be necessary to have time-directed versions of it.

\begin{definition}[Capacity density conditions: TBCDC, TFCDC and TSCDC] \label{def:TBCDC}
	Let $\Omega \subset \R^{n+1}$ be an open set. We say that $\Omega$ satisfies the \textit{time-backward capacity density condition} (TBCDC for short) if there exist $a\in(0,1)$ and $c_{\mathrm{CDC}}>0$ such that
	\begin{equation*} 
		\frac{ \capac \left( \big( \overline{C_r(x_0)} \times [t_0-r^2,t_0-(ar)^2] \big) \, \cap \, \Omega^c\right)} {\capac\left(\overline{C_r(x_0)}\times [t_0-r^2,t_0-(ar)^2] \right)} 
		\geq 
		c_{\mathrm{CDC}}, 
		\qquad 
		\forall \, (x_0, t_0) \in \Sigma, \, r > 0.
	\end{equation*}
	Similarly, we say that $\Omega$ satisfies the \textit{time-forward capacity density condition} (TFCDC) if
	\begin{equation*} 
		\frac{ \capac \left( \big( \overline{C_r(x_0)} \times [t_0+(ar)^2,t_0+r^2] \big) \, \cap \, \Omega^c\right)} {\capac\left(\overline{C_r(x_0)}\times [t_0+(ar)^2,t_0+r^2] \right)} 
		\geq 
		c_{\mathrm{CDC}},
		\qquad 
		\forall \, (x_0, t_0) \in \Sigma, \, r > 0.
	\end{equation*}
	We say that $\om$ satisfies the \textit{time-symmetric capacity density condition} (TSCDC) if it satisfies both the TBCDC and TFCDC conditions.	
\end{definition}

We will sometimes alleviate notation by denoting $C_{\mathrm{CDC}} := \max \{ a^{-1}, c_{\mathrm{CDC}}^{-1} \}$ to remember dependencies on the CDC constants.

\begin{remark} \label{rem:capac_cylinders}
	As shown in \cite[Lemma 3.11]{HHK}, the denominators right above are actually simple because the capacity of cylinders satisfies an elementary estimate: if $0 < a < b \leq 1$,
	\begin{equation*}
		\capac \big(\overline{C_r(x)} \times [t-(br)^2, t-(ar)^2] \big) \approx r^n, 
		\qquad 
		\forall \, (x, t) \in \R^{n+1},
	\end{equation*}
	where the implicit constant depends only on $n, a, b$.
\end{remark}

\begin{remark} \label{rem:boundaries}
	Under the TSCDC assumption, it is easy to check (see \cite[Section 3.3]{HHK}) that the quasi-lateral boundary $\Sigma$ coincides with the topological boundary $\pom$, and also with the essential boundary $\partial_e \Omega$ because the abnormal boundary $\partial_a \Omega$ is empty (for the definitions of the different parts of a parabolic boundary, check e.g. the exposition in \cite[Section 2.2]{HHK} and the references therein). Thus, along the text, we frequently simply identify $\Sigma = \pom$ without further comments.
\end{remark}

We will need the following elementary lemma later (it extends \cite[Lemma 2.6]{BHMN}).

\begin{lemma} \label{lem:2.6BHMN}
	Let $\Omega \subset \R^{n+1}$ be an open set satisfying the TSCDC. Let $\mbf{Y} = (Y, s) \in \Omega$, and consider $\mbf{x} = (x, t) \in \Sigma$ a closest point, satisfying $\norm{\mbf{Y} - \mbf{x}} = \delta(\mbf{Y})$. Then it holds $\mbf{x} \in \partial C_{\delta(\mbf{Y})}(Y) \times [s-\delta(\mbf{Y})^2, s+\delta(\mbf{Y})^2]$.
\end{lemma}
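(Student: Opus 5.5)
The plan is to use the definition of the parabolic norm to reduce to one bad case, and to exclude that case with the capacity density condition. Write $\delta := \delta(\mbf{Y})$. Since $\norm{(X,t)} = \max\{|X|, |t|^{1/2}\}$, the identity $\norm{\mbf{Y} - \mbf{x}} = \delta$ gives $|x - Y| \le \delta$ and $|t - s| \le \delta^2$, with equality in at least one of them. The claim $\mbf{x} \in \partial C_{\delta}(Y) \times [s-\delta^2, s+\delta^2]$ is exactly the assertion $|x-Y| = \delta$. So it suffices to rule out the case
\begin{equation*}
|x - Y| < \delta \quad\text{and}\quad |t-s| = \delta^2,
\end{equation*}
that is, $\mbf{x}$ lying in the relative interior of the top or bottom face of $\overline{\C_\delta(\mbf{Y})}$.

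First I record that the open cube $\C_\delta(\mbf{Y})$ is contained in $\Omega$. By definition of $\delta$, this cube does not meet $\pom$. It is connected and contains $\mbf{Y} \in \Omega$, so it lies entirely in $\Omega$.

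Next, suppose $t = s + \delta^2$ and $|x-Y| < \delta$, so that $\mbf{x}$ lies on the future face. I apply the TBCDC at $\mbf{x} \in \Sigma$ with a small radius $r$ satisfying $r < \delta - |x - Y|$ and $r^2 < 2\delta^2$. Then $\overline{C_r(x)} \subset C_\delta(Y)$. Moreover,
\begin{equation*}
[t - r^2, t - (ar)^2] \subset (s - \delta^2, s + \delta^2),
\end{equation*}
because $t - (ar)^2 < t = s + \delta^2$ and $t - r^2 > s + \delta^2 - 2\delta^2$. Hence the compact set $\overline{C_r(x)} \times [t-r^2, t-(ar)^2]$ is contained in $\C_\delta(\mbf{Y}) \subset \Omega$. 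Its intersection with $\Omega^c$ is therefore empty, and so it has zero capacity. By Remark~\ref{rem:capac_cylinders}, the denominator in the TBCDC is comparable to $r^n > 0$, so the ratio equals $0 < c_{\mathrm{CDC}}$. This is a contradiction.

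The bottom-face case $t = s - \delta^2$ is symmetric: I use the TFCDC on the cylinder $\overline{C_r(x)} \times [t + (ar)^2, t + r^2]$, which again sits inside $\C_\delta(\mbf{Y})$ for the same choice of small $r$. This is exactly where time-symmetry is needed, since each face is excluded by the capacity condition in the opposite time direction.

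The only delicate point is the bookkeeping of open versus closed sets: the capacity condition is stated for closed cylinders, so I must check that they fit inside the open cube. This is handled by the strict inequalities in the choice of $r$. No real obstacle is expected.
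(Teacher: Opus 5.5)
Your proof is correct and matches the paper's argument. Both observe that $\C_{\delta}(\mbf{Y})\subset\Omega$, split $\partial\C_{\delta}(\mbf{Y})$ into lateral, future and past faces, and rule out the future face with the TBCDC and the past face with the TFCDC at a small scale, because the capacity cylinder would then lie inside $\Omega$. Your explicit choice of $r$, which makes the closed cylinder fit inside the open cube, is slightly more careful than the paper's bare assertion $\C^+_\varepsilon(\mbf{x})\subset\C_d(\mbf{Y})$.
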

\begin{proof}
	Abbreviate $d := \delta(\mbf{Y})$. Since it clearly holds $\mbf{x}\in \partial \C_d(\mbf{Y})$, a space-time cube, we just need to check that it belongs to its lateral face, and not its back or front faces (the extrema in time). Indeed, $\partial \C_d(\mbf{Y}) = \Gamma_{\mathrm{lat}} \cup \Gamma_{\mathrm{back}} \cup \Gamma_{\mathrm{front}}$, where 
	\begin{equation*}
		\Gamma_{\mathrm{lat}} 
		:=
		\partial C_d(Y) \times [s-d^2, s+d^2], 
		\quad 
		\Gamma_{\mathrm{back}} 
		:=
		C_d(Y) \times \{ s-d^2\}, 
		\quad 
		\Gamma_{\mathrm{front}} 
		:=
		C_d(Y) \times \{ s+d^2\}.
	\end{equation*}
	Let us show that  $\mbf{x}$ cannot belong to  $\Gamma_{\mathrm{back}}$. Indeed, if it happened that $\mbf{x} = (x, t) \in \Gamma_{\mathrm{back}}$, then there would exist $\varepsilon = \varepsilon(\mbf{x}) > 0$ very small (smaller than the distance from $x$ to $\partial C_d(Y)$) such that $\C^+_\varepsilon (\mbf{x}) \subset \C_d(\mbf{Y}) \subset \Omega$ (the last inclusion is by definition of $d$), which is a contradiction with the TFCDC applied at $\mbf{x} \in \Sigma$ with scale $\varepsilon$. Analogously, one can rule out that $\mbf{x}$ belongs to $\Gamma_{\mathrm{front}}$ using the TBCDC, so it must hold $\mbf{x} \in \Gamma_{\mathrm{lat}}$, which is what we wanted.
\end{proof}


\subsection{Whitney regions} \label{subsec:whitney}

In the following we let $\mathcal{W} := \mathcal{W}(\Omega) := \{\mbf{I}\}_{\mbf{I} \in \mathcal{W}}$ denote a collection of (closed) dyadic (parabolic) Whitney cubes of $\Omega$, which
form a covering of $\Omega$ with non-overlapping interiors, such that
\begin{equation*}
	4\, {\rm{diam}}(\mbf{I})\leq \dist(4 \mbf{I},\pom) \leq  \dist(\mbf{I},\pom) \leq 40 \, {\rm{diam}}(\mbf{I}),
	\qquad \mbf{I} \in \mathcal{W},
\end{equation*}
and also, with an implicit constant only depending on $n$, it holds
\begin{equation*}
	\diam(\mbf{I}_1)\approx\diam(\mbf{I}_2), \mbox{ whenever $\mbf{I}_1, \mbf{I}_2 \in \mathcal{W}$ meet.}
\end{equation*}

We next associate Whitney regions to the dyadic cubes on the boundary. Assuming that $\Sigma=\pom$ is parabolic ADR, and given $Q\in \mathbb{D}$, we introduce, for a given parameter $K_0\gg 1$ (its value will be fixed in Section~\ref{sec:WHSA} and later used in Section~\ref{sec:pushing_CME}),
\begin{equation*}
	\mathcal{W}_Q
	:=
	\mathcal{W}_Q(K_0)
	:= 
	\big\{\mbf{I}\in \mathcal{W}:\,K_0^{-1} \ell(Q)\leq \ell(\mbf{I})
	\leq K_0\,\ell(Q),\, {\rm and}\, \dist(\mbf{I},Q)\leq K_0\, \ell(Q)\big\}.
\end{equation*}
This allows us to introduce the \textit{Whitney region associated to $Q \in \D$} by
\begin{equation*}
	\mbf{U}_Q:= \bigcup_{I\in \mathcal{W}_Q} (1+\vartheta)I,
\end{equation*}
where $\vartheta > 0$ is a small dilation parameter so that the enlarged Whitney regions $\{(1+10\vartheta) \mbf{I}\}_{\mbf{I} \in \mathcal{W}}$ still retain the properties of Whitney decompositions.
We observe that the Whitney region $\mbf{U}_Q$ may have more than one connected component (we will denote them by $\{\mbf{U}_Q^i\}_i$), but that the number of distinct components is uniformly bounded, depending only upon $n$, $K_0$ and $\vartheta$. Note that by construction
\begin{equation}\label{eq:whitney_bounded}
	\dist(\mbf{Y}, Q) \le C(n)K_0 \, \ell(Q), 
	\quad 
	\forall \, \mbf{Y} \in \mbf{U}_Q.
\end{equation}

Finally, let us define some enlarged Whitney regions. In the following $\varepsilon>0$ is a small but fixed parameter (its value will also be fixed in Section~\ref{sec:WHSA} and later used in Section~\ref{sec:pushing_CME}). Then, given $Q\in\mathbb{D}$,
we write
\begin{equation}\label{epschain}
	\mbf{X} \sim_{\varepsilon,Q} \, \mbf{Y}
\end{equation}
if $\mbf{X}$ may be connected to $\mbf{Y}$ by a chain of
at most $\varepsilon^{-1}$ parabolic ``Whitney cylinders", i.e., cylinders of the form
$\C_k:=\C_{\delta(\mbf{Y}_k)/2}(\mbf{Y}_k)$, with
$1\leq k \leq N\leq \varepsilon^{-1}$, such that
$\C_k \cap \C_{k+1} \neq \emptyset$ when $1\leq k\leq N-1$, and
\begin{equation*}
	\varepsilon^3\ell(Q)
	\leq
	\delta(\mbf{Y}_k)
	\leq 
	\varepsilon^{-3}\ell(Q), 
	\quad 
	\text{for $1 \leq k \leq N$.}
\end{equation*}

With this in mind, we define the \textit{enlarged Whitney region associated to $Q \in \D$} by 
\begin{equation*}
	\widetilde{\mbf{U}}_Q
	:=
	\bigcup_i \widetilde{\mbf{U}}^i_Q,
\end{equation*}
where the union is taken over all the connected components $\mbf{U}_Q^i$ of $\mbf{U}_Q$, and each enlarged region $\widetilde{\mbf{U}}^i_Q$ is defined as 
\begin{equation*}
	\widetilde{\mbf{U}}^i_Q
	:=
	\widetilde{\mbf{U}}^{i}_Q(\varepsilon)
	:= 
	\left\{\mbf{X} \in\Omega:\, \mbf{X} \sim_{\varepsilon,Q} \mbf{Y} \  \text{ for some } \mbf{Y} \in \mbf{U}^{i}_Q \right\}.
\end{equation*}
We note here that, although the connected components $\mbf{U}_Q^i$ of $\mbf{U}_Q$ do not intersect, their enlarged versions $\widetilde{\mbf{U}}_Q^i$ may intersect. Nevertheless, this is harmless for our arguments.

It is important to also note that the ``chain regions'' $\widetilde{\mbf{U}}_Q$, in the parabolic setting, should \textit{not} be thought of as Harnack chain regions, as opposed to their elliptic counterparts. Indeed, in the definition of the relation \eqref{epschain}, the time directedness of the chain of points $\mbf{Y}_k$ is not taken into account, whereas true parabolic Harnack chains (allowing for successive application of Harnack's inequality as in Lemma~\ref{lem:harnack}) are necessarily time directed. Accordingly, we will indeed never compare values of caloric functions across arbitrary points of the $\widetilde{\mbf{U}}_Q$ regions.


\subsection{Parabolic uniform rectifiability} \label{subsec:UR}

Let $\Sigma \subset \mathbb R^{n+1}$ be a closed set, and assume that $\Sigma$ is {parabolic Ahlfors-David regular} with constant $M\geq 1$ (see Definition~\ref{def:ADR}). Then, the parabolic version of the ``beta"-numbers of P. Jones are:
\begin{equation*}
	\beta(\mbf{z}, r  )
	:=
	\beta_\Sigma( \mbf{z}, r ) 
	:=
	\inf_{P}  
	\biggl ( \, \bariint_{  \C_r ( \mbf{z}) \cap \Sigma } \, \biggl(\frac {\dist ( \mbf{y}, P )}{r}\biggr)^2  d \sigma (\mbf{y} )\biggr )^{1/2},
	\qquad \mbf{z} \in \Sigma, \; 0 < r < \diam(\Sigma),
\end{equation*}
where the infimum ranges over the set of all $ n $-dimensional
hyperplanes $ P $ containing a line
parallel to the $ t $ axis (that is, $t$-independent planes).

We also introduce the following measure on $(\mbf{z}, r) \in \Sigma \times (0,\diam(\Sigma))$:
\begin{equation*}
	d \nu( \mbf{z}, r  ) 
	:=
	d \nu_\Sigma ( \mbf{z}, r)
	:=
	\big(\beta_\Sigma  ( \mbf{z}, r)\big)^2 \, d \sigma ( \mbf{z}) \,\frac{dr}{r}
\end{equation*}

\begin{definition}[Parabolic uniform rectifiability, p-UR]\label{def:UR}
	Assume that $\Sigma  \subset \mathbb R^{n+1}$ is parabolic ADR with constant $C_{\mathrm{ADR}}$. Let $\nu=\nu_\Sigma$ be defined as above. We say that \textit{$\Sigma$ is parabolic uniformly rectifiable} with constants $(C_{\mathrm{ADR}},C_{\mathrm{UR}})$ if
	\begin{equation*}
		\sup_{\substack{\mbf{x} \in\Sigma \\ 0<\rho<\diam(\Sigma)}}  \rho^{ -(n+1) } \, \nu \big( (\C_\rho(\mbf{x}) \cap \Sigma ) \times ( 0, \rho) \big) \leq
		C_{\mathrm{UR}}.
	\end{equation*}
\end{definition}

In the case that $\Omega$ is the open set lying above the graph of a function $\psi$, it is well-known that $\pom$ is \textit{elliptic} uniformly rectifiable whenever $\psi$ is a Lipschitz function. The greatest difference and difficulties when working in the parabolic setting arise from the fact that $\psi$ being (parabolic) Lipschitz does not suffice for $\pom$ to be \textit{parabolic} uniformly rectifiable: some extra regularity property is needed, which we explain next. In view of the result of \cite{BHMN_graph}, this extra regularity property is actually necessary in order for parabolic uniform rectifiability to be a relevant notion for applications to PDE.

\begin{definition}[Lip(1,1/2) and regular Lip(1,1/2) functions] \label{def:rlip}
	A function $\psi:\mathbb R^{n-1}\times\mathbb R\to \mathbb R$ is called Lip(1,1/2) with constant $b_1$, if it is Lipschitz with respect to the parabolic distance:
	\begin{align}\label{1.1}
		|\psi(x,t)-\psi(y,s)|\leq b_1 \big(|x-y|+|t-s|^{1/2}\big),
		\qquad 
		\forall \, (x,t)\in\mathbb R^{n}, (y,s)\in\mathbb R^{n}.
	\end{align}
	Furthermore, let us denote by $D_{1/2}^t \psi  (x, t) $
	the half derivative in $ t $ of $ \psi ( x, \cdot )$ with  $x $ fixed, defined by the Fourier multiplier $|\tau|^{1/2}$ or 
	\begin{align*} 
		D_{1/2}^t  \psi (x, t):= \widehat c \; \mathrm{p.v.} \int_{ \mathbb R }
		\, \frac{ \psi ( x, s ) - \psi ( x, t ) }{ | s - t |^{3/2} } \, d s,
	\end{align*}
	for $ \widehat c$  properly chosen\footnote{Both formulations give rise to the same definition of regular Lip(1,1/2) function, as can be seen in \cite{HL}. Here one should view the half order time derivative as a distribution acting on the set of smooth functions with compact support with zero mean. The Lip(1,1/2) property is enough to ensure this distribution is well defined.}. We say that $\psi$ is a {regular} Lip(1,1/2)  function with parameters $b_1$ and $b_2$, if $\psi$ satisfies \eqref{1.1} and if
	\begin{align*} 
		D_{1/2}^t\psi\in \text{BMO}_{\text{par}}(\mathbb R^n), \; \text{and actually} \; \|D_{1/2}^t\psi\|_{\text{BMO}_{\text{par}}} \leq b_2<\infty,
	\end{align*}
	where $ \| \cdot \|_{\text{BMO}_{\text{par}}} $ denotes the semi-norm in parabolic $\text{BMO}(\mathbb R^{n})$ (that is, taking the standard definition of BMO, but replacing cubes by parabolic cylinders).
\end{definition}

Indeed, it turns out that if $\Omega$ is the open set lying above the graph of a function $\psi$, namely $\Omega = \big\{ (x_0, x, t) \in \R \times \R^{n-1} \times \R : x_0 > \psi(x, t) \big\}$, then $\pom$ is \textit{parabolic} uniformly rectifiable if and only if $\psi$ is regular Lip(1,1/2) (see \cite[Remark 2.14]{BFHH}). Thus, by the result of \cite{BHMN_graph}, regular Lip(1,1/2) graphs are the model of parabolic boundaries in which PDE are well-behaved. The extra regularity condition that $D_{1/2}^t\psi\in \text{BMO}_{\text{par}}$ is what makes the parabolic setting substantially more challenging than the elliptic counterpart.

The goal in our main result (Theorem~\ref{th:main}) is to show that $\Sigma=\pom$ is parabolic uniformly rectifiable. It will be very convenient to use the following criterion to prove p-UR, which basically reduces the proof to finding regular Lip(1,1/2) graphs which approximate $\Sigma$ at most scales, making use of the dyadic decomposition from Lemma~\ref{lem:cubes}. It is very important that in Theorem~\ref{th:corona_implies_UR} the constant $\eta$ need not be small, which provides flexibility for the proofs.

\begin{definition}[Corona decomposition by regular Lip(1,1/2) graphs] \label{def:unilateralcorona}
	Suppose that the closed set $\Sigma \subset \ree$ is parabolic ADR with constant $C_{\mathrm{ADR}}$. We say that \textit{$\Sigma$ has a Corona decomposition by regular $\Lip(1,1/2)$ graphs} if there exist constants
	$\eta > 0$ and  $K > 1$, and a disjoint decomposition
	$\mathbb D = \G\cup\B$, such that the following hold:
	\begin{list}{$(\theenumi)$}{\usecounter{enumi}\leftmargin=1cm \labelwidth=1cm \itemsep=0.2cm \topsep=.2cm \renewcommand{\theenumi}{\roman{enumi}}}
		
		\item  The collection $\G$ can be subdivided into a family of disjoint stopping time re\-gimes, $$\G = \bigsqcup\limits_{\S \in \mathcal{S}} \S,$$ such that each such tree $\S$ is semi-coherent, that is, each $\S$ has a maximal cube $Q(\S)$ (with respect to containment) and whenever $Q_1, Q_2, Q_3 \in \D$ satisfy $Q_1 \subset Q_2 \subset Q_3$ and $Q_1, Q_3 \in \S$, then it also holds $Q_2 \in \S$.
		
		\item The maximal cubes $\{Q({\S})\}$ and the cubes in $\B$ satisfy a Carleson
		packing condition,
		\begin{equation*}
			\sum_{\substack{\S \in \mathcal{S} \\ Q({\S})\subset R}} \sigma\big(Q({\S})\big)
			+
			\sum_{\substack{Q\in\B \\ Q\subset R}} \sigma(Q)
			\leq\, 
			C(n,C_{\mathrm{ADR}},\eta,K)\, \sigma(R),
			\quad \forall R\in \D.
		\end{equation*}
		
		\item For each $\S \in \mathcal{S}$, there exists a coordinate system, and a
		regular  Lip(1,1/2)  function $ \psi_{\S} = \psi_\S ( x, t ) : \mathbb R^{n-1}\times\mathbb R\to \mathbb R$, with
		parameters $b_1\leq\eta $ and $b_2$, with $b_2$ depending at most on $n$, $C_{\mathrm{ADR}}$, $\eta$ and $K$, such that if we define
		$\Gamma_{\S}:=\{(\psi_{\S}(x,t),x,t): (x,t)\in \mathbb R^{n-1}\times\mathbb R\}$ (in the coordinate system associated to $\S$), then
		\begin{equation*}
			\sup_{\mbf{x} \in KQ} \dist(\mbf{x},\Gamma_{{\S}} )
			\leq
			\eta\,\diam(Q),
			\qquad 
			\forall \, Q \in \S.
		\end{equation*}
	\end{list}
\end{definition}

The following criterion is the result of \cite[Theorems 1.2 and 4.16]{BHHLN_22a} (see also \cite[Theorem 4.15]{BHMN}).

\begin{theorem}[Corona by regular graphs implies p-UR]\label{th:corona_implies_UR}
	Suppose that $\Sigma \subset \ree$ is parabolic ADR with constant $C_{\mathrm{ADR}}$. Assume that $\Sigma$ has, for some $\eta>0$ and $K>1$, an  $(\eta,K)$-Corona decomposition by regular $\Lip(1,1/2)$ graphs with constants $b_1\leq\eta$ and $b_2$ (see Definition~\ref{def:rlip}), in the sense of Definition \ref{def:unilateralcorona}.
	Then $\Sigma$ is parabolic uniformly rectifiable with constants $(C_{\mathrm{ADR}},C_{\mathrm{UR}})$ for some $C_{\mathrm{UR}}=C_{\mathrm{UR}}(n,C_{\mathrm{ADR}},\eta,K,b_1,b_2)$.
\end{theorem}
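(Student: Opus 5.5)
The plan is to reduce p-UR, via Definition~\ref{def:UR}, to a dyadic Carleson estimate for $\beta$-numbers, and to prove that estimate tree by tree using the approximating graphs. Set $\beta(Q) := \beta_\Sigma(\mbf{x}_Q, C\ell(Q))$ for $Q \in \D$, with $\mbf{x}_Q$ the center of $Q$. By Lemma~\ref{lem:cubes} and ADR, it suffices to show
\[
\sum_{Q\subset R} \beta(Q)^2 \sigma(Q) \lesssim \sigma(R), \qquad \forall\, R \in \D .
\]
Cubes in $\B$ and the top cubes $Q(\S)$ are harmless: $\beta \lesssim 1$ trivially, and these cubes pack by Definition~\ref{def:unilateralcorona}(ii). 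So the whole matter is to show, for each tree $\S$,
\[
\sum_{Q\in \S} \beta(Q)^2\sigma(Q) \lesssim \sigma(Q(\S)).
\]
Summing this over the trees with $Q(\S) \subset R$ and using the packing condition again gives the claim. Trees whose top cube is not contained in $R$ only contribute their part inside $R$, which has a truncated top cube, so the same bound applies.

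For a fixed tree $\S$, I will use a stopping distance rather than the crude bound $\dist(\mbf{y},\Gamma_\S)\le \eta\,\diam(Q)$. That crude bound only gives $\beta(Q)\lesssim\eta$ per cube, which is not summable, and $\eta$ need not be small. Define
\[
d_\S(\mbf{y}) := \inf_{Q'\in\S} \big(\dist(\mbf{y},Q')+\ell(Q')\big).
\]
Semi-coherence and property (iii) of Definition~\ref{def:unilateralcorona}, applied to a near-minimal $Q'$ (with $K$ large enough that $\mbf{y}\in KQ'$), give $\dist(\mbf{y},\Gamma_\S)\lesssim \eta\, d_\S(\mbf{y})$ for $\mbf{y}$ near $Q(\S)$. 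For $Q\in\S$, take $P_Q$ a near-optimal $t$-independent plane for $\Gamma_\S$ in $\C_{C'\ell(Q)}(\mbf{x}_Q)$. Let $\mbf{y}'\in\Gamma_\S$ be a point nearest to $\mbf{y}\in \Sigma\cap\C_{C\ell(Q)}(\mbf{x}_Q)$. Then
\[
\beta(Q)^2 \lesssim \bariint_{\C_{C\ell(Q)}(\mbf{x}_Q)\cap\Sigma}\Big(\frac{\dist(\mbf{y}',P_Q)}{\ell(Q)}\Big)^2 d\sigma(\mbf{y}) + \eta^2\bariint_{\C_{C\ell(Q)}(\mbf{x}_Q)\cap\Sigma}\Big(\frac{d_\S(\mbf{y})}{\ell(Q)}\Big)^2 d\sigma(\mbf{y}).
\]

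The second term sums well. Note that $d_\S(\mbf{y})\lesssim \ell(Q)$ for these $\mbf{y}$. Interchanging sum and integral, each $\mbf{y}$ sees a geometric series $\sum_{\ell(Q)\gtrsim d_\S(\mbf{y})} (d_\S(\mbf{y})/\ell(Q))^2\lesssim 1$, with bounded overlap at each scale by ADR. Hence this part is $\lesssim \eta^2\sigma(Q(\S))$, with no smallness of $\eta$ needed.

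For the first term, I will use that $\Gamma_\S$ is p-UR with constants depending on $b_1, b_2$ (regular Lip(1,1/2) graphs are p-UR, as noted after Definition~\ref{def:rlip}; see \cite{BFHH}). The task is then to transfer the integral over $\sigma$ on $\Sigma$ to surface measure on $\Gamma_\S$. The map $\mbf{y}\mapsto\mbf{y}'$ moves points by at most $\eta\, d_\S(\mbf{y})$. I will show its push-forward of $\sigma|_{\Sigma\cap \C_{C\ell(Q)}}$ is bounded by $\lesssim$ surface measure on $\Gamma_\S$, up to smearing at scale $d_\S(\mbf{y})$. To do this, decompose $\Sigma$ near $Q(\S)$ into the regions where $d_\S\approx 2^{-k}$, which are essentially unions of minimal cubes of $\S$ and their neighbours. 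Each such region of size $2^{-k}$ has $\sigma$-mass $\approx 2^{-k(n+1)}$ by ADR, and is mapped into a $2^{-k}$-neighbourhood on $\Gamma_\S$ of comparable graph measure. Replacing $\dist(\mbf{y}',P_Q)$ by its average over that graph patch costs another term of the form $(d_\S/\ell(Q))^2$, handled as above. The first term is thus dominated by an $L^2$ $\beta$-number of $\Gamma_\S$ at scale $\approx\ell(Q)$, whose Carleson sum over $Q\in\S$ is $\lesssim\sigma(Q(\S))$ because $\Gamma_\S$ is p-UR.

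I expect the main obstacle to be this transfer step: controlling the multiplicity of the nearest-point map from $\Sigma$ to $\Gamma_\S$ and the parabolic geometry of the graph patches (time-direction anisotropy). If the direct argument breaks down, I would fall back on proving that $\Sigma$ has big pieces of p-UR sets inside each tree and invoke a parabolic big-pieces stability result, as is done in \cite{BHHLN_22a}.
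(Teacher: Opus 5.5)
Your reduction to a tree-by-tree dyadic estimate is fine. So are the bound $\dist(\mbf{y},\Gamma_\S)\lesssim \eta\, d_\S(\mbf{y})$ and your treatment of the term involving $d_\S$. One small correction: $K>1$ is given and may be close to $1$. Instead of ``taking $K$ large'', pass from a near-minimal $Q'$ to its ancestor of side $\approx d_\S(\mbf{y})/(K-1)$, which stays in $\S$ by semi-coherence. The constants then depend on $K$, and the first few generations below $Q(\S)$ can be put into $\B$.

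The genuine gap is the transfer step for the first term, and it fails exactly in the regime the theorem must cover. The nearest-point map moves $\mbf{y}$ by up to $\approx\eta\, d_\S(\mbf{y})$, and $\eta$ is not small. In this paper the statement is applied with $K=2$ and $\eta\approx C_{\mathrm{LA}}K_0$ large (see \eqref{eq:Gamma_approximates} and the proof of Theorem~\ref{th:main2}); this is why the authors stress that $\eta$ need not be small. Once $\eta\gtrsim 1$, the displacement exceeds the scale $d_\S(\mbf{y})$ of the region being moved. Regions at \emph{different} scales can then be sent onto the same part of $\Gamma_\S$. For example, take $\Gamma_\S$ flat and, for every $k$, a piece of $\Sigma$ with $d_\S\approx 2^{-k}$ at height $\approx\eta\, 2^{-k}$ above a common point $\mbf{w}_0$. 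Your per-scale count (mass $\approx 2^{-k(n+1)}$ sent to a patch of comparable graph measure) is correct. However, the smeared push-forward of $\sigma$ has density comparable to the number of scales whose patches contain a given $\mbf{w}\in\Gamma_\S$, and this is unbounded (logarithmically, near $\mbf{w}_0$). So that measure is not $\lesssim \H^{n+1}_{\parab}|_{\Gamma_\S}$, and ``dominated by an $L^2$ $\beta$-number of $\Gamma_\S$'' does not follow. All you get is $\beta$-numbers of $\Gamma_\S$ integrated against an unbounded weight.

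Your argument does close when $\eta$ is small compared with $K-1$ and the structural constants. In that case, averaging $\dist(\cdot,P_Q)$ over $\Gamma_\S\cap\C_{c\,d_\S(\mbf{y})}(\mbf{y}')$ only involves points where $d_\S\approx d_\S(\mbf{y})$, because $d_\S$ is $1$-Lipschitz, and this gives bounded overlap across scales. For general $\eta$ you would need input your sketch does not contain. One option is $\beta_q$-Carleson bounds with $q>2$ for regular Lip(1,1/2) graphs, combined with $L^p$ control of the overlap function; that function only has bounded averages, since it comes from a disjoint Whitney-type family dilated by $\approx\eta$.

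The paper does not reprove the statement; it imports it from \cite[Theorems 1.2 and 4.16]{BHHLN_22a}. There the coronization is turned into big pieces of big pieces of the approximating graphs, using the Carleson packing of the tops and bad cubes, and p-UR follows from that. This measure-theoretic route never transfers $\beta$-numbers pointwise, so it is insensitive to the size of $\eta$. Your fallback points in that direction, but as stated (``big pieces of p-UR sets inside each tree'') it is not what is needed. One needs uniform big pieces in every surface ball, and producing them from the packing condition is the actual content of the cited result.
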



\subsection{PDE framework} Let us recall some elementary properties of solutions of the heat equation (and its adjoint equation), and some finer ones making it explicit that the TBCDC is a sufficient condition in order for solutions to behave nicely close to the boundary of our domain. We start with some definitions.

\begin{definition}[Caloric and subcaloric functions]
	Let $\Omega \subset \R^{n+1}$ be an open set. We say that $u = u(X, t)$ is \textit{caloric} in $\Omega$ if 
	\begin{equation*}
		Hu(X, t)
		:=
		\partial_t u(X, t) - \Delta u(X, t)
		=
		0
		\qquad 
		\forall \, (X, t) \in \Omega.
	\end{equation*}
	Similarly, we say that $u = u(X, t)$ is \textit{adjoint caloric} in $\Omega$ if 
	\begin{equation*}
		H^*u(X, t)
		:=
		\partial_t u(X, t) + \Delta u(X, t)
		=
		0
		\qquad 
		\forall \, (X, t) \in \Omega.
	\end{equation*}
	Analogously, $u$ is subcaloric in $\Omega$ if $H u \leq 0$ in $\Omega$, and adjoint subcaloric if $H^* u \leq 0$ in $\Omega$. 
\end{definition}

\begin{definition}[Caloric measure]
	Let $\Omega \subset \R^{n+1}$ be an open set satisfying the TBCDC. Whenever $f \in C_c(\pom)$, there is a solution to the continuous Dirichlet problem 
	\begin{equation*}
		\begin{cases}
			(\partial_t - \Delta) u = 0 \quad & \text{in } \Omega, \\ 
			u = f & \text{on } \pom,
		\end{cases}
	\end{equation*}
	given by the Perron-Wiener-Brelot procedure (see \cite[Chapter 8]{Watson}). This construction actually verifies that, if we fix $\X \in \Omega$, the mapping $C_c(\pom) \ni f \mapsto u(\X) \in \R$ is positive and bounded (maximum principle), and also linear. Thus, the Riesz representation theorem yields the existence of a probability measure $\omega^\X$, supported on $\pom$, such that 
	\begin{equation*}
		u(\mbf{X}) = \iint_\pom f(\mbf{y}) \, d\omega^\X(\mbf{y}), 
		\qquad 
		\forall \, \mbf{X} \in \Omega, \, f \in C_c(\pom).
	\end{equation*}
	Similarly, if $\Omega$ satisfies the TFCDC, there is an associated adjoint caloric measure, satisfying the same properties above, but for solutions of $(\partial_t + \Delta) u = 0$.
\end{definition}

\begin{definition}[Green's function] \label{def:green}
	Let $\Omega \subset \R^{n+1}$ be an open set satisfying the TBCDC. We define the Green's function with pole at $\mbf{Y} \in \Omega$ by 
	\begin{equation*}
		G(\mbf{X}; \mbf{Y})
		:=
		\Gamma(\mbf{X}; \mbf{Y}) - \iint_\pom \Gamma(\mbf{z}, \mbf{Y}) \, d \omega^{\mbf{X}}(\mbf{z}),
		\qquad 
		\mbf{X} \in \Omega,
	\end{equation*}
	where we recall that $\Gamma$ is the heat kernel and $\omega^{\mbf{X}}$ the caloric measure with pole at $\mbf{X}$. Similarly, if $\Omega$ satisfies the TFCDC, we define the adjoint Green's function with pole at $\mbf{X} \in \Omega$ by 
	\begin{equation*}
		\widehat{G}(\mbf{X}; \mbf{Y})
		:=
		\Gamma(\mbf{X}; \mbf{Y}) - \iint_\pom \Gamma(\mbf{X}, \mbf{z}) \, d \widehat{\omega}^{\mbf{Y}}(\mbf{z}),
		\qquad 
		\mbf{Y} \in \Omega,
	\end{equation*}
	where $\widehat{\omega}$ denotes the adjoint caloric measure.
\end{definition}

\begin{remark}
	The TBCDC is actually not needed for the resolubility of the heat equation (same for TFCDC for the adjoint equation), and hence for the definitions of caloric measure and Green's functions, but we have preferred this exposition to avoid technicalities dealing with the essential boundary (see Remark~\ref{rem:boundaries}). In any case, let us mention that the TBCDC assumption allows for solvability of the continuous Dirichlet problem (and hence existence of the associated parabolic measure) for parabolic operators with bounded measurable coefficients, well beyond the heat equation (see \cite[Theorem 1.9]{HHK}), as a consequence of the parabolic version of the Wiener's criterion from \cite{EG_Wiener}.
\end{remark}

The following well-known interior estimates for caloric functions can be found, for instance, in \cite[Theorem H]{A} and \cite[Section 0]{FGS}.

\begin{lemma}[{Harnack's inequality}] \label{lem:harnack}
	Let $u$ be caloric and non-negative in $D \times (T_{\min}, T_{\max} )$ for a bounded Lipschitz domain $D \subset \Rn$. Let $D' \subset D$ be a convex subdomain of $D$. Then, for every $X, Y \in D'$, $T_{\min} < s < t \leq T_{\max}$ we have 
	\begin{equation*}
		u(Y, s)
		\leq 
		u(X, t) \exp \left[ C(n) \left( \frac{\abs{X-Y}^2}{t-s} + \frac{t-s}{\min\{1, s - T_{\min}, \dist(D', \partial D)^2\}} + 1 \right) \right].
	\end{equation*}
\end{lemma}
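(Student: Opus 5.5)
The argument I would give is the classical one for this weak Harnack form, built from the local Harnack inequality of Moser and a chain of small cylinders following the segment from $(Y,s)$ to $(X,t)$. It is cited from \cite{A} and \cite{FGS}. The basic ingredient is the scale-invariant local Harnack inequality. If $v\geq 0$ is caloric in $\C_{2\rho}(Z,\tau)$, then
\[
\sup_{\C_{\rho/2}(Z,\tau-\rho^2)} v \leq C_0(n)\, \inf_{\C_{\rho/2}(Z,\tau+\rho^2)} v ,
\]
where the cylinders are placed so that the first lies strictly before the second in time. This follows from Moser's iteration, or for the heat equation alone from explicit representation formulas. I take it as known and iterate it.

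Fix $X,Y\in D'$ and $T_{\min}<s<t\le T_{\max}$. Set $d:=\min\{1,(s-T_{\min})^{1/2},\dist(D',\partial D)\}$. Convexity of $D'$ puts the segment $\gamma(\lambda)=(Y+\lambda(X-Y),\,s+\lambda(t-s))$, $\lambda\in[0,1]$, inside $D'\times[s,t]$. Cylinders of radius $\lesssim d$ centred on $\gamma$ then stay inside $D\times(T_{\min},T_{\max}]$, after a harmless adjustment near the final time $t$, where one uses backward cylinders only. I choose $N$ steps and radius $\rho$ subject to two constraints. The time increment $(t-s)/N$ should be $\approx\rho^2$, so that consecutive cylinders are correctly time-ordered. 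The space increment $|X-Y|/N$ should be $\lesssim\rho$, so that consecutive cylinders overlap. The choice is
\[
\rho\approx\min\Big\{d,\ \sqrt{(t-s)/N}\Big\},\qquad N\approx \frac{|X-Y|^2}{t-s}+\frac{t-s}{d^2}+1 .
\]
With this, $\rho^2\approx (t-s)/N$ and $|X-Y|/N\lesssim \rho$, since $|X-Y|^2/N^2\le |X-Y|^2/(N\cdot |X-Y|^2/(t-s))=(t-s)/N$. Applying the local Harnack inequality $N$ times gives $u(Y,s)\le C_0^{N}u(X,t)$, and $C_0^N=\exp(C(n)N)$ is exactly the bound claimed.

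The main obstacle is bookkeeping near the edges. Near the initial time the cylinders must lie above $T_{\min}$, and this is why $s-T_{\min}$ enters $d$. Near the final time $t$, which may equal $T_{\max}$, the last cylinder must reach $(X,t)$ from the past only. This is handled by the time-ordering in the Harnack inequality, which needs $v$ only on a backward neighbourhood of the later point. The factor $\min\{1,\cdot\}$ is only a normalization that makes $d\le1$. Checking that all the cylinders stay inside $D\times(T_{\min},T_{\max}]$ is routine once $\rho\le d/10$.
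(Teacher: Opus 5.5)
Your Harnack-chain argument is correct. The paper does not prove this lemma but quotes it from \cite[Theorem H]{A} and \cite{FGS}. Your route is the standard derivation behind that statement: iterate Moser's scale-invariant local Harnack inequality along the segment from $(Y,s)$ to $(X,t)$, using $N \approx |X-Y|^2/(t-s) + (t-s)/d^2 + 1$ steps with $\rho^2 \approx (t-s)/N$. The one thing to state from the outset, rather than as an afterthought, is the backward form of the local Harnack inequality, in which the later sub-cylinder touches the top of the enclosing cylinder. The centred version you wrote extends $3\rho^2$ past the later point, so it cannot serve for the steps near $t$ when $T_{\max}-t \lesssim \rho^2$.
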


\begin{lemma}[{Caccioppoli's estimate}] \label{lem:caccioppoli}
	Let $u$ be non-negative and subcaloric in $\C_{2r}(\mbf{X}_0)$. Then 
	\begin{equation*}
		\iiint_{\C_r(\mbf{X}_0)} \abs{\nabla u(\mbf{X})}^2 d\mbf{X}
		\leq 
		\frac{C(n)}{r^2} \iiint_{\C_{2r}(\mbf{X}_0)} u(\mbf{X})^2 d\mbf{X}.
	\end{equation*}
\end{lemma}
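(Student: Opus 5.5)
The estimate follows from a weighted energy identity for $u^2$, with a cutoff adapted to the backward-in-time support of subcaloric information. We may assume $u$ is smooth, which holds by interior regularity for the heat equation in the classical caloric case. In the merely subcaloric case we would first mollify, since mollification preserves $Hu\le 0$, and pass to the limit at the end. Fix a cutoff $\varphi\in C_c^\infty(\C_{2r}(\mbf{X}_0))$ with $0\le\varphi\le1$ and $\varphi\equiv1$ on $\C_r(\mbf{X}_0)$. We take it of product form $\varphi(X,t)=\eta(X)\chi(t)$, with
\[
\abs{\nabla\eta}\lesssim r^{-1}\qquad\text{and}\qquad \abs{\chi'}\lesssim r^{-2}.
\]

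The key pointwise identity is
\[
H(u^2)=2u\,Hu-2\abs{\nabla u}^2,
\]
so that
\[
\abs{\nabla u}^2=-\tfrac12 H(u^2)+u\,Hu\le-\tfrac12 H(u^2),
\]
where the inequality uses $u\ge0$ and $Hu\le0$. We multiply this by $\varphi^2$ and integrate over $\C_{2r}(\mbf{X}_0)$. Integrating by parts in space, and in time, against the compactly supported $\varphi^2$ moves every derivative off $u^2$:
\[
-\iiint\varphi^2\,\partial_t(u^2)=\iiint u^2\,\partial_t(\varphi^2),
\qquad
\iiint\varphi^2\Delta(u^2)=-\iiint 2u\,\nabla u\cdot\nabla(\varphi^2).
\]
Thus
\[
\iiint\varphi^2\abs{\nabla u}^2\le\tfrac12\iiint u^2\abs{\partial_t\varphi^2}+2\iiint\varphi\,u\,\abs{\nabla u}\,\abs{\nabla\varphi}.
\]

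The last term is absorbed by Young's inequality, which bounds it by
\[
\tfrac12\iiint\varphi^2\abs{\nabla u}^2+2\iiint u^2\abs{\nabla\varphi}^2.
\]
Moving the first piece to the left and inserting the bounds $\abs{\partial_t\varphi^2}\lesssim r^{-2}$ and $\abs{\nabla\varphi}^2\lesssim r^{-2}$ gives
\[
\iiint_{\C_r(\mbf{X}_0)}\abs{\nabla u}^2\le\iiint\varphi^2\abs{\nabla u}^2\le\frac{C(n)}{r^2}\iiint_{\C_{2r}(\mbf{X}_0)}u^2,
\]
which is the claim.

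The only real obstacle is justifying this computation when $u$ is only a weak (or distributional) subsolution rather than smooth. In that case one uses Steklov averages in time, the standard device in parabolic energy estimates, or the mollification mentioned above. The sign condition $u\,Hu\le0$ survives either regularization because $u\ge0$, and Fatou's lemma together with $L^2$ convergence passes the inequality to the limit. For the application in the paper, where $u$ is caloric or a smooth subcaloric function like $\abs{v}$ regularized, the smooth computation above suffices.
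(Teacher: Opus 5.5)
Your proof is correct. The identity $H(u^2)=2u\,Hu-2\abs{\nabla u}^2$ and the sign $u\,Hu\le 0$ combine with integration by parts against $\varphi^2$ ($\varphi$ compactly supported in $\C_{2r}(\X_0)$) and absorption via Young's inequality to give the bound with $C(n)$. Your regularization remarks suffice for the classical notion of subcaloric used in the paper.

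The paper gives no proof of its own; it simply cites \cite[Theorem H]{A} and \cite[Section 0]{FGS}. Your energy argument is the standard one behind those references. It also shows that non-negativity is only used to discard $u\,Hu$, so the estimate holds for caloric functions of any sign, which is how the paper actually applies it to $\partial_{x_i} u_Q$.
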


The next results, about boundary behavior of caloric functions under the TBCDC, can be found in \cite[Lemmas 3.15 and 3.20]{MP} for the heat equation (see also \cite[Theorem 1.2]{HHK} for general parabolic operators and \cite[Lemmas 2.2 and 2.5]{GH_RH} under the TBADR).

\begin{lemma}[{Bourgain's estimate}] \label{lem:bourgain}
	If $\Omega \subset \R^{n+1}$ is an open set which satisfies the TBCDC, there exists $c_{\mathrm{B}} = c_{\mathrm{B}}(n, C_{\mathrm{CDC}}) > 0$ so that for every $\mbf{x}_0 \in \Sigma$ and $r > 0$, it holds 
	\begin{equation*}
		\omega^{\mbf{X}}(\mbf{C}_r(\mbf{x}_0) \cap \pom) \geq c_{\mathrm{B}}, 
		\qquad 
		\forall \, \mbf{X} \in C_{c_{\mathrm{B}} r}(\mbf{x}_0) \cap \Omega.
	\end{equation*}
\end{lemma}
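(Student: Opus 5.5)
The plan is a barrier argument via the Wiener-type boundary regularity that the TBCDC encodes. Fix $\mbf{x}_0=(x_0,t_0)\in\Sigma$ and $r>0$. Set $v(\mbf{X}):=1-\omega^{\mbf{X}}(\C_r(\mbf{x}_0)\cap\pom)$, or rather its continuous version obtained by replacing the indicator with a function $f\in C_c(\pom)$ such that $\mathbf 1_{\C_{r/2}(\mbf{x}_0)}\le f\le \mathbf 1_{\C_r(\mbf{x}_0)}$. Then $v$ is a nonnegative caloric function, bounded by $1$, whose boundary values on $\pom\cap\C_{r/2}(\mbf{x}_0)$ vanish. It suffices to show that $v\le 1-c_{\mathrm B}$ on $\C_{c_{\mathrm B}r}(\mbf{x}_0)\cap\Omega$. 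By translation and parabolic dilation we may take $\mbf{x}_0=\mbf{0}$ and $r=1$; the TBCDC and the statement are invariant under these operations.

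The key step is a one-scale decay estimate. Extend $v$ by $0$ on $\C_{1/2}(\mbf{0})\setminus\Omega$. The extension $\tilde v$ is then a nonnegative subcaloric function in $\C_{1/2}$, bounded by $1$; this uses that points of $\pom$ are regular, which follows from the TBCDC via the Wiener criterion \cite{EG_Wiener}. We compare $1-\tilde v$ with the capacitary potential. Let $K:=\big(\overline{C_{\rho}(0)}\times[-\rho^2,-(a\rho)^2]\big)\cap\Omega^c$ with $\rho$ a small fixed fraction of $1/2$. Let $\mu$ be its equilibrium measure, so that $\Gamma\mu\le 1$ and $\mu(K)=\capac(K)\ge c_{\mathrm{CDC}}\capac(\text{cylinder})\approx \rho^n$ by Remark~\ref{rem:capac_cylinders}. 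The potential $\Gamma\mu$ is caloric off $K$. By \eqref{eq:bound_gamma} and the time gap $(a\rho)^2$, it satisfies $\Gamma\mu\gtrsim \mu(K)\rho^{-n}\gtrsim c_{\mathrm{CDC}}$ on a small forward cylinder around $\mbf{0}$. To see this, bound $\Gamma(\mbf X;\mbf Y)$ from below by $\rho^{-n}$ when $\mbf X$ is at time $\ge 0$ and within $\rho$ of $0$ while $\mbf Y\in K$ has time lag between $(a\rho)^2$ and $2\rho^2$. On the other hand, $\Gamma\mu\lesssim \rho^n\cdot \mathrm{dist}^{-n}$ is small on the lateral and bottom parabolic boundary of $\C_{1/2}$ once $\rho$ is small.

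We then apply the maximum principle in $\Omega\cap\C_{1/2}$ to $w:=\tilde v-1+\big(\Gamma\mu-\sup_{\partial_p\C_{1/2}}\Gamma\mu\big)_{\text{normalized}}$. More precisely, set $\tilde v\le 1-(\Gamma\mu-\epsilon)/(1-\epsilon)$ on the parabolic boundary. On $\pom$ this holds because $\tilde v=0$ there and $\Gamma\mu\le 1$. On $\partial_p\C_{1/2}$ it holds because $\tilde v\le1$ and $\Gamma\mu\le\epsilon$. Hence the inequality propagates inside. Evaluating near $\mbf{0}$ gives $\tilde v\le 1-c(n,C_{\mathrm{CDC}})$ on $\C_{c}(\mbf{0})\cap\Omega$. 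This is the required bound with $c_{\mathrm B}$ equal to the minimum of the two constants.

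The main obstacle is the lower bound on $\Gamma\mu$ near $\mbf 0$. It must hold at points slightly in the future of $t_0$ as well as at points in the past. This requires the window $[-\rho^2,-(a\rho)^2]$ to lie strictly before the target cylinder, which forces the target cylinder radius to be $\ll a\rho$. The other delicate point is justifying the comparison on $\pom$. Here I would take $K$ compact inside $\Omega^c$ and apply the maximum principle on $\Omega\cap\C_{1/2}\setminus K$. Since $\Gamma\mu$ is only lower semicontinuous, I would approximate $\mu$ by measures supported on slightly shrunk compacts if continuity is needed.
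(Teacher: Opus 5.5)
Your capacitary-potential barrier argument is correct. It is the standard proof of this kind of estimate; the paper does not prove the lemma itself but imports it from \cite{MP} and \cite{HHK}. The technical worries at the end are unnecessary: any admissible $\mu$ with $\mu(K)\ge\frac12\capac(K)$ already satisfies $\Gamma\mu\le 1$ pointwise everywhere by definition, and that is all the comparison on $\pom$ needs. One small point: use the bound $\Gamma\mu\le\epsilon$ on all of $\partial\C_{1/2}$, top face included (it holds for the same reason), because points of $\pom$ on the boundary of $\C_{1/2}$ need not have $f=1$.
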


\begin{lemma}[{H\"{o}lder continuity at the boundary}] \label{lem:holder}
	Assume that $\Omega \subset \R^{n+1}$ is an open set which satisfies the TBCDC. Then there exist some $\gamma > 0$ and $C > 0$, depending only on $n$ and $C_{\mathrm{CDC}}$, such that the following holds. Let $\mbf{x}_0 \in \Sigma$ and $r > 0$ such that $u$ is caloric in $\Omega \cap \C_{2r}(\mbf{x}_0)$ associated to some continuous (and compactly supported) boundary datum which vanishes on $\mbf{C}_{2r}(\mbf{x}_0) \cap \Sigma$. Then,
	\begin{equation*}
		u(\mbf{X}) 
		\leq 
		C \left( \frac{\delta(\mbf{X})}{r} \right)^{\gamma} \sup_{\mbf{C}_{2r}(\mbf{x}_0) \cap \Omega} u, 
		\qquad 
		\forall \, \X \in \mbf{C}_r(\mbf{x}_0) \cap \Omega.
	\end{equation*}
\end{lemma}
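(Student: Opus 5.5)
We will derive the estimate from a geometric decay of the supremum at each boundary point, obtained by iterating Bourgain's estimate (Lemma~\ref{lem:bourgain}) together with the maximum principle. At the end we pass from distance to the centre to distance to the boundary. Throughout we may assume $M:=\sup_{\C_{2r}(\mbf{x}_0)\cap\Omega}u>0$; otherwise there is nothing to prove. If $u$ is not assumed non-negative, we run the argument below for $u^+$, which is subcaloric; the maximum principle comparison still applies to it.

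\textbf{Step 1 (one-step decay).} Fix $\mbf{x}\in\Sigma\cap\C_{r}(\mbf{x}_0)$ and $0<\rho\le r/2$, so that $\C_\rho(\mbf{x})\subset\C_{2r}(\mbf{x}_0)$. Set $M_\rho:=\sup_{\C_\rho(\mbf{x})\cap\Omega}u$. Work in $\Omega_\rho:=\Omega\cap\C_\rho(\mbf{x})$. On $\partial\Omega_\rho\cap\Sigma$ the datum of $u$ vanishes, and on the remaining part $\partial\C_\rho(\mbf{x})\cap\overline\Omega$ we have $u\le M_\rho$. The maximum principle (via the Perron--Wiener--Brelot solution in $\Omega_\rho$) then gives
\[
u(\mbf{X})\le M_\rho\bigl(1-\omega_{\Omega_\rho}^{\mbf{X}}(\Sigma\cap\overline{\C_{\rho/2}(\mbf{x})})\bigr),\qquad \mbf{X}\in\Omega_\rho .
\]
Next we need a lower bound $\omega_{\Omega_\rho}^{\mbf{X}}(\Sigma\cap\C_{\rho/2}(\mbf{x}))\ge c_{\mathrm B}$ for $\mbf{X}\in\C_{c_{\mathrm B}\rho/2}(\mbf{x})\cap\Omega$. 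This is Bourgain's estimate, but applied to $\Omega_\rho$ rather than $\Omega$, at the boundary point $\mbf{x}$ and scale $\rho/2$. It is legitimate for the following reason. The proof of Lemma~\ref{lem:bourgain} at a point and scale only uses the capacity of the time-backward pieces of the complement inside that cylinder, and these pieces are larger for $\Omega_\rho^c\supset\Omega^c$. Hence we obtain $M_{\theta\rho}\le(1-c_{\mathrm B})M_\rho$ with $\theta:=c_{\mathrm B}/2$. \textbf{This localization of Bourgain's estimate to the truncated domain is the step I expect to require the most care.} Concretely, one has to check that the capacitary lower bound from the TBCDC at scales $\le\rho/2$ survives the truncation, and that the comparison can be made on the non-smooth set $\Omega_\rho$ using PWB solutions.

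\textbf{Step 2 (iteration).} Iterating Step 1 gives $M_{\theta^k r/2}\le(1-c_{\mathrm B})^kM$. Choosing $k$ with $\theta^{k+1}r/2\le s<\theta^k r/2$, this yields
\[
\sup_{\C_s(\mbf{x})\cap\Omega}u\le C\,(s/r)^{\gamma}M,\qquad \gamma:=\frac{\log(1-c_{\mathrm B})}{\log\theta}>0 ,
\]
for all $0<s\le r/2$, with $C,\gamma$ depending only on $n$ and $C_{\mathrm{CDC}}$.

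\textbf{Step 3 (from the centre to $\delta(\mbf{X})$).} Let $\mbf{X}\in\C_r(\mbf{x}_0)\cap\Omega$. If $\delta(\mbf{X})\ge r/8$, the bound $u(\mbf{X})\le M\le 8^\gamma(\delta(\mbf{X})/r)^\gamma M$ is trivial. Otherwise, pick $\mbf{x}\in\Sigma$ with $\|\mbf{X}-\mbf{x}\|=\delta(\mbf{X})<r/8$. Then $\mbf{x}$ lies in a slight enlargement of $\C_r(\mbf{x}_0)$. To place $\mbf{x}$ in $\C_r(\mbf{x}_0)$ exactly, we run Steps 1--2 with $r$ replaced by a fixed fraction of $r$ and relabel constants. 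Finally, apply Step 2 at $\mbf{x}$ with $s=2\delta(\mbf{X})$. Since $\mbf{X}\in\C_s(\mbf{x})$, this gives $u(\mbf{X})\le C(\delta(\mbf{X})/r)^\gamma M$, which is the claim.
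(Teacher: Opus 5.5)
Your argument is correct and follows the standard route. The paper gives no self-contained proof; it cites \cite[Lemmas 3.15 and 3.20]{MP} (and \cite[Lemmas 2.2 and 2.5]{GH_RH}), where boundary H\"older decay is derived from Bourgain's estimate (Lemma~\ref{lem:bourgain}) by exactly this maximum-principle comparison in $\Omega\cap\C_\rho(\mbf{x})$ and geometric iteration, as Remark~\ref{rem:estimates_BHMN} also indicates. The localization step you flag is harmless: the capacitary subsolution in the proof of Bourgain's estimate at $(\mbf{x},\rho/2)$ only involves $\Omega^c$ and $\Omega$ inside $\C_{\rho/2}(\mbf{x})$, and there $\Omega_\rho$ and $\Omega$ coincide.
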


Actually, the result in \cite[Lemma 2.5]{GH_RH} is not exactly the one stated above. Nevertheless, it is easy to note that the integral in their RHS is controlled by our supremum. 

\begin{remark}
	All the results above are true also for adjoint solutions of the heat equation, just reversing appropriately the time variable, and using TFCDC instead of TBCDC.
\end{remark}

\begin{remark} \label{rem:estimates_BHMN}
	The fact that Bourgain's estimate holds (Lemma~\ref{lem:bourgain}) has a series of consequences, on top of Lemma~\ref{lem:holder}:
	\begin{itemize}
		\item Some of the finer estimates of Caffarelli-Fabes-Mortola-Salsa-type (concretely, the upper bounds for the Green's function in terms of some associated caloric measure) as in \cite[Lemma 5.6, Remark 5.7]{BHMN} also hold under the TBCDC assumption, and the same goes for the Caccioppoli inequality at the boundary from \cite[Lemma 5.10]{BHMN}. The reader can readily check that their proofs only use Lemma~\ref{lem:bourgain}, aside from elementary estimates. 
		
		\item Moreover, as explained in \cite[Remark 5.19]{BHMN}, the boundary H\"{o}lder continuity from Lemma~\ref{lem:holder} and the Riesz formula from \cite[Lemma 5.12]{BHMN} (which is true in just any open set, with no regularity needed) together imply some estimate as in Lemma~\ref{lem:holder}, but with averages in the right hand side, at least in the case when $u$ is an adjoint Green function (so we need to instead assume the TFCDC in order for the machinery to apply). This means that, under the TFCDC assumption, \cite[Corollary 5.17]{BHMN} is true.
		
		\item Lastly, \cite[Lemma 5.20]{BHMN} also holds under the TFCDC assumption, if the boundary is ADR, as can be readily verified from inspecting its proof (it uses the previous two points in this remark, applied to an adjoint Green's function).
	\end{itemize}
\end{remark}



\section{Construction of solutions oscillating at given scales} \label{sec:GMT}

To establish the packing condition for the low density cubes in the proof of Theorem~\ref{th:corona}, we construct caloric functions whose oscillations are localized at prescribed scales. While the construction is inspired by \cite[Lemma~3.3]{GMT}, its extension to the parabolic setting requires substantial new ideas. The main difficulty is that comparing the solutions at different points requires repeated applications of interior Harnack inequalities while simultaneously approaching the rough boundary at the desired scales. In the parabolic setting, the time-directed nature of Harnack's inequality makes these competing requirements significantly more delicate. Our weaker geometric assumptions also require several additional refinements of the construction.

\begin{proposition}[Existence of caloric functions oscillating at prescribed scales] \label{prop:GMT}
	Let $\Omega \subset \ree$ be an open set satisfying the TBCDC (see Definition~\ref{def:TBCDC}), so that $\Sigma := \pom$ is p-ADR (see Definition~\ref{def:ADR}). Then, there exists $\tau = \tau(n, C_{\mathrm{ADR}}, C_{\mathrm{CDC}}) > 0$ and $\varepsilon' = \varepsilon'(\tau) > 0$ small enough, such that the following holds (the implicit constants will be independent of $\tau$ and $\varepsilon'$):
	
	Let $Q \in \D$ (from Lemma~\ref{lem:cubes}) with $\ell(Q) \leq 1$, and assume that there is $\mbf{P}_Q \in \C_{c_{\mathrm{B}} \ell(Q)}(\mbf{x}_Q) \cap \Omega$ (recall Lemma~\ref{lem:bourgain}) such that
	\begin{equation}\label{eq:hyp_epsilon_1}
		\delta(\mbf{P}_Q) \approx \varepsilon' \ell(Q),
	\end{equation}
	and $E_Q \subset Q$ a Borel set such that 
	\begin{equation}\label{eq:hyp_epsilon_2}
		\omega^{\mbf{P}_Q} (E_Q) \geq (1-\varepsilon')\, \omega^{\mbf{P}_Q}(Q).
	\end{equation}
	Then there exists a non-negative measurable function $f_Q$ satisfying 
	\begin{equation*} 
		0 \leq f_Q \lesssim \mathbf{1}_{E_Q} 
	\end{equation*}
	such that if we denote $u_Q(\mbf{X}) := \iint_\pom f_Q(\mbf{y}) d\omega^{\mbf{X}}(\mbf{y})$ for $\X \in \Omega$, then $u_Q$ fulfills the bound 
	\begin{equation*}
		\iiint_{V_Q} \abs{\nabla u_Q}^2 \delta
		\geq 
		c(\tau, \varepsilon') \, \sigma(Q)
		,
	\end{equation*} 
	for $c(\tau, \varepsilon') \approx \tau^{n+5} (\varepsilon')^{n+1}$,
	where $\mbf{V}_Q \subset \Omega$ is some Borel set satisfying 
	\begin{equation} \label{eq:loc_V_Q}
		\mbf{V}_Q 
		\; \subset \; 
		\C_{\ell(Q)}(\mbf{x}_Q) \cap \big\{\mbf{X} \in \Omega : \delta(\mbf{X}) \geq \tau \delta(\mbf{P}_Q)\big\}.
	\end{equation}
\end{proposition}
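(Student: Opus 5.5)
The plan is to follow \cite[Lemma~3.3]{GMT}. First produce a bounded solution $u_Q=\omega^{\cdot}(F)$, with $F\subset E_Q$, that differs by an absolute amount at two points $\mbf{P}_Q$ and $\mbf{Z}$. Then join $\mbf{P}_Q$ to $\mbf{Z}$ by a chain of boundedly many Whitney-type cylinders inside $\{\delta\ge\tau\delta(\mbf{P}_Q)\}\cap\C_{\ell(Q)}(\mbf{x}_Q)$, and turn the oscillation along the chain into a lower bound for $\iiint|\nabla u_Q|^2\delta$ on one of the cylinders. Write $d:=\delta(\mbf{P}_Q)\approx\varepsilon'\ell(Q)$, and let $\mbf{x}_P\in\Sigma$ be a closest point to $\mbf{P}_Q$.

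\emph{Step 1 (mass of $E_Q$).} The cube $Q$ contains $\C_{C_{\mathrm{cub}}^{-1}\ell(Q)}(\mbf{x}_Q)\cap\Sigma$. Applying Lemma~\ref{lem:bourgain} at scale $C_{\mathrm{cub}}^{-1}\ell(Q)$, after shrinking $c_{\mathrm{B}}$ harmlessly, gives $\omega^{\mbf{P}_Q}(Q)\ge c_{\mathrm{B}}$. Hence \eqref{eq:hyp_epsilon_2} yields $\omega^{\mbf{P}_Q}(E_Q)\ge c_{\mathrm{B}}/2$.

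\emph{Step 2 (the set $F$ and the two points).} Fix a large $A$. The function $\mbf{X}\mapsto\omega^{\mbf{X}}(\Sigma\setminus\C_{Ad}(\mbf{x}_P))$ has data vanishing on $\C_{Ad}(\mbf{x}_P)\cap\Sigma$. By Lemma~\ref{lem:holder} it is $\lesssim A^{-\gamma}$ at $\mbf{P}_Q$. For $A$ large this forces $\omega^{\mbf{P}_Q}(F)\ge c_{\mathrm{B}}/4$ with $F:=E_Q\cap\C_{Ad}(\mbf{x}_P)$. Set $f_Q:=\mathbf 1_F$, so $u_Q(\mbf{P}_Q)\ge c_{\mathrm{B}}/4$.

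Next I will use p-ADR of $\Sigma$ at scale $\approx\ell(Q)$ around $\mbf{x}_P$ to pick $\mbf{y}\in Q$ with $\|\mbf{y}-\mbf{x}_P\|\approx 4Ad$. This needs $\varepsilon'$ small in terms of $A$, which is why $\varepsilon'=\varepsilon'(\tau)$. The datum then vanishes on $\C_{2Ad}(\mbf{y})\cap\Sigma$. Lemma~\ref{lem:holder} gives $u_Q\le C\tau^{\gamma}\le c_{\mathrm{B}}/8$ at every point $\mbf{Z}\in\C_{Ad}(\mbf{y})$ with $\delta(\mbf{Z})\approx\tau d$, once $\tau$ is small. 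Note that the boundary H\"older estimate is not time-directed, so this step costs nothing extra in the parabolic setting.

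\emph{Step 3 (local oscillation controls the gradient).} For a caloric $u$ in $\C_{2r}(\mbf{X}_0)\subset\Omega$ I will use the parabolic Poincaré-type estimate $\operatorname{osc}_{\C_r(\mbf{X}_0)}u\lesssim r\big(\bariint_{\C_{2r}(\mbf{X}_0)}|\nabla u|^2\big)^{1/2}$. It follows from interior estimates, writing $\partial_tu=\operatorname{div}\nabla u$ and applying Lemma~\ref{lem:caccioppoli} to $u-c$.

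Suppose a chain $\C_{r_k}(\mbf{Y}_k)$, $k=1,\dots,N$, with $2\C_{r_k}(\mbf{Y}_k)\subset\Omega$, $r_k\approx\tau d$, $\delta\gtrsim\tau d$ on the doubled cylinders, and consecutive cylinders intersecting, joins $\mbf{P}_Q$ to such a $\mbf{Z}$. Then $c_{\mathrm{B}}/8\le\sum_k\operatorname{osc}\le N\max_k\operatorname{osc}$. Hence some cylinder satisfies
\[
\iiint_{\C_{2r_k}(\mbf{Y}_k)}|\nabla u_Q|^2\delta\gtrsim N^{-2}(\tau d)^{n+1}.
\]
Taking $\mbf{V}_Q$ to be that cylinder gives \eqref{eq:loc_V_Q}. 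If $N\lesssim\tau^{-2}$, this lower bound is $\gtrsim\tau^{n+5}(\varepsilon')^{n+1}\ell(Q)^{n+1}\approx\tau^{n+5}(\varepsilon')^{n+1}\sigma(Q)$, which is the claimed constant.

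\emph{Main obstacle (Step 4): constructing the chain.} Without Harnack chain hypotheses, $\{\delta\ge\tau d\}$ need not connect $\mbf{P}_Q$ to any good $\mbf{Z}$. I do not need Harnack's inequality along the chain, only that each cylinder lies in $\Omega$. So I will build it geometrically by sliding from $\mbf{P}_Q$ toward $\mbf{y}$, shifting sideways whenever the path comes within $\tau d$ of $\Sigma$, and I will stop the chain as soon as it meets a point $\mbf{Z}$ with $u_Q(\mbf{Z})\le c_{\mathrm{B}}/8$.

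The key dichotomy at each stopping point $\mbf{W}$, near some $\mbf{w}\in\Sigma$ with $\delta(\mbf{W})\approx\tau d$, is the following. If $\|\mbf{w}-\mbf{x}_P\|\ge 2Ad$, the H\"older estimate already gives smallness and we stop. Otherwise we run Step 2 again with $\mbf{P}_Q$ replaced by $\mbf{W}$. The count $N\lesssim\tau^{-2}$ should come from the fact that all the cylinders lie in a region of parabolic size $\approx Ad$ at depth $\approx\tau d$, together with p-ADR. Making this dichotomy and the resulting count rigorous is where I expect the real work to lie, together with the extra care forced by having only the TBCDC (for instance, closest points may sit on back faces, unlike in Lemma~\ref{lem:2.6BHMN}).
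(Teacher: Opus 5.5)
Your Steps 1--3 are sound. Step 4, however, is a genuine gap, and with your choice of $f_Q$ it cannot be closed. You fix $F=E_Q\cap\C_{Ad}(\mbf{x}_P)$ before choosing any path. The only small values of $u_Q$ you can then certify lie near a boundary point $\mbf{y}$ at distance $\approx 4Ad$ from $\mbf{x}_P$.

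The hypotheses (TBCDC and p-ADR only, with no Harnack chain or connectivity condition) give no way to reach such points from $\mbf{P}_Q$ through $\{\delta\gtrsim\tau d\}$. For instance, $\mbf{P}_Q$ may lie in a bounded component $U$ of $\Omega$ of diameter $\approx d$. One example is $\Omega=\{x_0>0\}\setminus\partial U$ with $U=\{(X,t): x_0>0,\ |X|^2<t<2\rho^2-|X|^2\}$, where $\rho\approx d$ and $x_0$ is the first coordinate of $X$. If $E_Q=Q$ and $A$ is large, then $\partial U\subset F$. Hence $u_Q\equiv 1$ on $U$, $\nabla u_Q\equiv 0$ there, and no chain starting at $\mbf{P}_Q$ ever meets a value $\le c_{\mathrm{B}}/8$.

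Your proposed dichotomy does not help, because $u_Q$ is already fixed: re-running Step 2 at $\mbf{W}$ only produces another distant target that is just as unreachable. The count $N\lesssim\tau^{-2}$ is also unsupported, since a chain wandering through a region of size $Ad$ at resolution $\tau d$ may need $\approx (A/\tau)^{n+2}$ cylinders; this is secondary.

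The missing idea is to choose the path first and localize the oscillating datum at its end. The paper proceeds as follows:
\begin{enumerate}
\item It runs a time-monotone parabola from a boundary point in the past of $\mbf{P}_Q$ (produced by the TBCDC) up to $\mbf{P}_Q$, and stops at the first point where $\delta=4\tau d$.
\item It takes $f_Q=g_Q\mathbf{1}_{E_Q}$, where $g_Q$ is the capacitary potential of $\Omega^c$ in a past cylinder of size $\tau d$ at the adjacent boundary point. Then $g_Q\gtrsim 1$ near that end of the path and $g_Q\lesssim\tau^n$ near $\mbf{P}_Q$.
\item It controls the loss $\omega^{\mbf{X}}(Q\setminus E_Q)\le C(\tau)\varepsilon'$ along the path by forward Harnack chains; this is why the path must be time-monotone.
\item It finishes with a Poincar\'e argument along translated parabolas.
\end{enumerate}

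Your own framework can also be repaired by keeping the large value at $\mbf{P}_Q$, where \eqref{eq:hyp_epsilon_2} is available, and removing data near the end of a path. Let $\mbf{W}$ be the first point of the segment from $\mbf{P}_Q$ to $\mbf{x}_P$ with $\delta(\mbf{W})=4\tau d$; this segment lies in $\C_d(\mbf{P}_Q)\subset\Omega$. Let $\mbf{w}\in\Sigma$ be a closest point to $\mbf{W}$, and set $F:=E_Q\setminus\C_{\sqrt{\tau}d}(\mbf{w})$.
\begin{itemize}
\item Lemma~\ref{lem:holder} at $\mbf{w}$ gives $u_Q(\mbf{W})\lesssim\tau^{\gamma/2}$.
\item Compare $\omega^{(\cdot)}(\C_{\sqrt{\tau}d}(\mbf{w})\cap\Sigma)$ with $C(\tau d^2)^{-1}$ times the heat potential of the indicator of $\C_{3\sqrt{\tau}d}(\mbf{w})$. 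This is a supersolution that is $\ge 1$ on $\C_{2\sqrt{\tau}d}(\mbf{w})$, so \eqref{eq:bound_gamma} gives $\omega^{\mbf{P}_Q}(\C_{\sqrt{\tau}d}(\mbf{w})\cap\Sigma)\lesssim\tau^{n/2}$. Hence $u_Q(\mbf{P}_Q)\ge c_{\mathrm{B}}/4$ by your Step 1.
\item The segment is covered by $\lesssim\tau^{-2}$ cylinders of radius $\tau d$ whose doubles lie in $\{\delta\ge 2\tau d\}$.
\end{itemize}
Your Step 3 then yields the claimed $\tau^{n+5}(\varepsilon')^{n+1}\sigma(Q)$ without any Harnack inequality.
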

\begin{proof}
	Write $\mbf{P}_Q = (P_Q, s_Q)$ and $\mbf{X}_Q = (X_Q, t_Q)$.	
	Denote $r := \delta(P_Q, s_Q)$, and note that $r \approx \varepsilon' \ell(Q) \ll \ell(Q)$ by \eqref{eq:hyp_epsilon_1}. We divide the proof into several steps for it to be easier to understand. Along the proof, it will be important to keep track of dependencies of constants on $\tau$ and $\varepsilon'$: otherwise we will leave them as implicit constants depending on $n, C_{\mathrm{ADR}}, C_{\mathrm{CDC}}$.
	
	\textbf{Step 1: geometrical construction.} In order to later apply Harnack inequalities, we need to develop a complicated construction to ensure that we stay in the interior at all times, and additionally move monotonically in time. The reader is advised to have Figure~\ref{fig:construction_initial} in mind during the whole proof.
		
	\textbf{Substep 1.1: some relevant points.}
	First, we construct a parabola within the domain that connects $(P_Q, s_Q)$ with a point $(y', s')$ which is on the boundary and, at the same time, sufficiently far backwards in time from $(P_Q, s_Q)$. This will be done in two cases, divided by the relationship between $(P_Q, s_Q)$ and any of its closest points at $\Sigma$, namely 
	\begin{equation*}
	\text{$(\hat{y},\hat{s}) \in \Sigma$ for which $\delta(P_Q, s_Q)=\norm{(P_Q, s_Q) - (\hat{y}, \hat{s})}$.}
	\end{equation*} 
	\begin{itemize}
		\item First, in the case that $\hat{s} < s_Q - (r/8)^2$, the point $(\hat{y},\hat{s})$ is sufficiently far backwards in time and we set  $(y', s') := (\hat{y}, \hat{s})$.
		
		\item Otherwise, we have that $\hat{s} \geq s_Q - (r/8)^2$ meaning that $\hat{s} - s_Q+(r/4)^2 > 0$, so it is possible to find $A > 0$ such that $\hat{s}-(aA r)^2=s_Q-(r/4)^2$, where $a > 0$ is the structural constant from the TBCDC condition (thus, $A \approx a^{-1}$, and we may treat it as a harmless constant). By such condition, applied at $(\hat{y}, \hat{s})$ with scale $A r$, we can find a point $(Z,\tau) \in \overline{C_{Ar}(\hat{y})} \times [\hat{s}-(A r)^2, \hat{s}-(aA r)^2] \cap \Omega^c$, i.e. $\tau \leq \hat{s}-(aA r)^2 = s_Q - (r/4)^2$. Additionally, the point $(P_Q, s_Q') := (P_Q, s_Q - (r/3)^2)$ resides in $\om$ (for its distance to $(P_Q, s_Q)$ is smaller than $r = \delta(P_Q, s_Q)$). Now, connecting the points $(P_Q, s_Q') \in \Omega$ and $(Z, \tau) \notin \Omega$ with a line segment, we necessarily find a point $(y', s') \in \pom$, and since both $s_Q'$ and $\tau$ are $\leq s_Q - (r/4)^2$, it must also be $s' \leq s_Q - (r/4)^2$. 
	\end{itemize}
	In either case the chosen point $(y', s')$ is sufficiently time-backward of $(P_Q, s_Q)$, namely
	\begin{equation} \label{def:y'}
		(y', s') \in \Sigma, \quad \norm{(P_Q, s_Q) - (y', s')} \lesssim r, \quad \text{and} \quad s' < s_Q - \left(\frac{r}{8}\right)^2.
	\end{equation}	
	
	Up to translation and rotation in only the spatial variables, we view
	\begin{equation*}
		(y', s') = (0, 0), 
		\qquad 
		(P_Q, s_Q) = (p_Q, 0, \ldots, 0, s_Q).
	\end{equation*}
	
	\begin{figure}[t!]
		\centering 
		\resizebox{0.9\textwidth}{!}{

		}
		\caption{Diagram of the construction. Remember that TBCDC enforces that every point in $\Sigma$ has a further point in $\Sigma$ in the near past, but we do not show this situation exactly in the picture because we are only showing 2 of the $n+1$ dimensions of the setting. Note that the TBCDC assumption does not rule out the case that $\Sigma$ may have lots of holes (the depicted situation is much better), hence the need for an argument to find $(y', s')$ as in \eqref{def:y'}.}
		\label{fig:construction_initial}
	\end{figure}
	
	Now consider the arc of parabola $\mathcal{P}$ (1D curve) with vertex at $(y', s') = (0, 0)$ passing through $(P_Q, s_Q)$ (which finishes there), defined by 
	\begin{equation*}
		\mathcal{P}
		:= 
		\Big\{ \bm{\gamma}(s) := (\gamma(s), 0, \ldots, 0, s) : 0 \leq s \leq s_Q \Big\}, 
		\quad 
		\text{where } 
		\gamma(s) 
		:= 
		\sqrt{\frac{p_Q^2}{s_Q}} \sqrt{s}
		=: 
		\alpha^{-1/2} \sqrt{s},
	\end{equation*}
	and it will be important for the forthcoming arguments that the ``aperture'' $\alpha$ of the parabola satisfies the following non-degeneracy estimate, which follows from \eqref{def:y'}, 
	\begin{equation} \label{eq:aperture}
		\alpha 
		=
		\frac{s_Q}{p_Q^2}
		=
		\frac{s_Q-s'}{\abs{P_Q - y'}^2}
		\gtrsim 
		\frac{(r/8)^2}{r^2} 
		\gtrsim
		1.
	\end{equation}

	Starting at $(P_Q, s_Q)$ and moving towards the past (in the pictures, to the left) along $\mathcal{P}$ towards $(y', s') = (0, 0)$, we find 
	\begin{equation*}
		\text{$(Z_0, t_0) \in \mathcal{P}$ the first point for which $\delta(Z_0, t_0) = 4\tau r$. }
	\end{equation*}
	Concretely, there must exist some $(y_0, s_0) \in \Sigma \cap \partial \C_{4\tau r}(Z_0, t_0)$.
	We also denote: 
	\begin{equation*}
		\text{$(Z_1, t_1)$ the point lying on $\mathcal{P}$ for which $t_1 := t_0 + (8\tau r)^2$,}
	\end{equation*}
	\begin{equation*}
		\text{$(Z_2, t_2) := \partial \C_{\frac14 r}(P_Q, s_Q) \cap \mathcal{P} \cap \{ t_2 \leq s_Q \}$.}
	\end{equation*}
	
	\textbf{Substep 1.2: some properties of the construction, and definition of $\mbf{V}_Q^1$ and $\mbf{V}_Q^2$.} 
	First, since $\delta(Z_0, t_0) = 4\tau r$, $(y', s') \in \Sigma$ and $(Z_0, t_0) \in \mathcal{P}$, we have
	\begin{equation*}
		4\tau r 
		\leq 
		\norm{(Z_0, t_0) - (y', s')}
		=
		\abs{Z_0} + t_0^{1/2}
		=
		\abs{\gamma(t_0)} + t_0^{1/2}
		=
		(1+\alpha^{-1/2}) t_0^{1/2},
	\end{equation*}
	which, along with \eqref{eq:aperture}, yields
	\begin{equation*} 
		t_0 \gtrsim (\tau r)^2.
	\end{equation*}
	This implies, remembering again \eqref{eq:aperture}, that for some uniform constant $M \geq 10$,
	\begin{equation*}
		\frac{d\gamma(t)}{dt}
		= 
		\alpha^{-1/2} \frac{1}{2\sqrt{t}}
		\leq 
		\frac{M}{\tau r} 
		\qquad 
		\text{whenever } t \geq t_0.
	\end{equation*}
	Therefore, the Lipschitz constant of $\mathcal{P}$ for $t \geq t_0$ is bounded above by $M / \tau r$ for some $M \geq 10$ which is independent of $\tau$. That is, 
	\begin{equation} \label{eq:lipschitz}
		\abs{X''-X'} \leq \frac{M}{\tau r} (t''-t'),
		\qquad 
		\forall \, (X', t'), (X'', t'') \in \mathcal{P} \text{ such that } t_0 \leq t' \leq t''
	\end{equation}
	
	Therefore, if we define 
	\begin{equation*}
		\mbf{V}_Q^1 
		:=
		\left\{ (X, t) \in \ree : \abs{X-Z_1} < \tau r \text{ and } \abs{t-t_1} < \frac{(\tau r)^2}{2M} \right\},
	\end{equation*}
	the parabola $\mathcal{P}$ escapes $\mbf{V}_Q^1$ through its right-most face (which will be useful later to apply Harnack's inequality). Indeed, otherwise, there would exist some point $(\widetilde{X}, \widetilde{t}) \in \mathcal{P} \cap \overline{\mbf{V}_Q^1}$ with $|\widetilde{X} - Z_1| = \tau r$ and $\widetilde{t} \geq t_1$. Then, by \eqref{eq:lipschitz}, we would have 
	\begin{equation*}
		\widetilde{t} - t_1 
		\geq 
		\frac{\tau r}{M} |\widetilde{X} - Z_1|
		=
		\frac{(\tau r)^2}{M}
		> 
		\frac{(\tau r)^2}{2M},
	\end{equation*}
	which contradicts $(\widetilde{X}, \widetilde{t}) \in \overline{\mbf{V}_Q^1}$. \\
	
	Another consequence of \eqref{eq:lipschitz}, for the points $(Z_0, t_0)$ and $(Z_1, t_1)$ defined in the previous substep, that will be useful later is the following
	\begin{equation} \label{eq:V_Q^1_is_close}
		\norm{(Z_0, t_0) - (Z_1, t_1)} 
		=
		\abs{Z_0 - Z_1} + \abs{t_0 - t_1}^{1/2}
		\leq 
		\frac{M}{\tau r} \abs{t_0 - t_1} + \abs{t_0 - t_1}^{1/2}
		\lesssim 
		\tau r.
	\end{equation}
	
	On the other hand, we claim that, using the definitions of $(Z_2, t_2)$ and $\mathcal{P}$, along with \eqref{eq:aperture}, there exists some $\theta \in (0, 1/10)$ such that
	\begin{equation} \label{eq:s_Q-t_2}
		s_Q - t_2 \geq (\theta r)^2,
	\end{equation}
	which allows us to define
	\begin{equation*}
		\mbf{V}_Q^2 := \C_{\frac12\theta r}(Z_2, t_2).
	\end{equation*}
	
	Let us show \eqref{eq:s_Q-t_2}: the proof is based on the simple fact that the slopes of the tangents to the parabola between $(Z_2, t_2)$ and $(P_Q, s_Q)$ are bounded by the slope at $(Z_2, t_2)$. If $(X, t) \in \mathcal{P}$ lies between $(Z_2, t_2)$ and $(P_Q, s_Q)$, we have $t \geq t_2 \geq \left( \frac34 r \right)^2$ (because $\delta(Z_2, t_2) \geq \frac34 r$ for $\delta(P_Q, s_Q) = r$ and $\norm{(P_Q, s_Q) - (Z_2, t_2)} = r/4$). Thus, for some uniform $C \geq 1$, it holds
	\begin{equation*}
		\frac{d\gamma(t)}{dt} 
		=
		\alpha^{-1/2} \frac{1}{2\sqrt{t}}
		\leq
		C \frac{1}{r},
	\end{equation*}
	where we have also used \eqref{eq:aperture}. Therefore, by the fundamental theorem of calculus for $\gamma$ (in 1 variable), $\abs{P_Q - Z_2} \leq C \frac{s_Q - t_2}{r}$. At this point, we claim that it holds $s_Q - t_2 \geq \left( \frac{r}{10C} \right)^2$, which clearly gives \eqref{eq:s_Q-t_2}. Indeed, if this were not true, we would have, putting the above together, 
	\begin{multline*}
		\frac14 r
		=
		\norm{(P_Q, s_Q) - (Z_2, t_2)}
		=
		\abs{P_Q - Z_2} + \abs{s_Q - t_2}^{1/2}
		\leq 
		C \frac{s_Q - t_2}{r} + \abs{s_Q - t_2}^{1/2}
		\\ \leq 
		C \frac{r^2}{100C^2 r} + \frac{r}{10C}
		\leq 
		\frac{r}{10} + \frac{r}{10}
		=
		\frac{r}{5},
	\end{multline*}
	which is clearly false.


	\textbf{Step 2: oscillation of a capacitary potential.} We are going to define in an elementary way a function which has the oscillation that we seek in this lemma, forgetting for the moment the fact that it needs to be supported only inside $E_Q$. We will amend this later.
	
	\textbf{Substep 2.1: definition of the capacitary potential.}
	Denote 
	\begin{equation*}
		\mbf{K}^-_{\tau r}(y_0, s_0)
		:=
		\left( \overline{C_{\tau r}(y_0)} \times [s_0 - (\tau r)^2, s_0 - (a\tau r)^2] \right) \cap \Omega^c,
	\end{equation*}
	so that by the TBCDC (Definition~\ref{def:TBCDC}) and Remark~\ref{rem:capac_cylinders}, there exists a positive measure $\mu$, supported in $\mbf{K}^-_{\tau r}(y_0,s_0)$, such that $\mu(\mbf{K}^-_{\tau r}(y_0,s_0))\approx (\tau r)^n$.
	Define then, using the heat kernel and recalling Definition~\ref{def:capacity},
	\begin{equation*}
		g_Q(X, t) 
		:=
		\Gamma \mu (X, t)
		=
		\iiint_{\mbf{K}^-_{\tau r}(y_0, s_0)} \Gamma(X, t; Y, s) \,d\mu(Y, s), 
		\qquad 
		(X, t) \in \R^{n+1}.
	\end{equation*}
	By definition of capacity, it holds
	\begin{equation} \label{eq:bound_g_Q}
		\norm{g_Q}_\infty \leq 1.
	\end{equation}

	\textbf{Substep 2.2: $g_Q$ is ``large'' close to the boundary.} Fix $(X, t) \in \mbf{V}_Q^1$. Then, for any $(Y, s) \in \mbf{K}^-_{\tau r}(y_0, s_0)$ it holds, using the definitions of $\mbf{V}_Q^1, (y_0, s_0)$ and $\mbf{K}^-_{\tau r}(y_0, s_0)$, and \eqref{eq:V_Q^1_is_close},
	\begin{multline*}
		\norm{(X, t) - (Y, s)} 
		\leq 
		\norm{(X, t) - (Z_1, t_1)} + \norm{(Z_1, t_1) - (Z_0, t_0)} 
		\\ + \norm{(Z_0, t_0) - (y_0, s_0)} + \norm{(y_0, s_0) - (Y, s)}
		\lesssim 
		\tau r,
	\end{multline*} 
	and also, using the definitions of $\mbf{K}^-_{\tau r}(y_0, s_0), (Z_1, t_1), (y_0, s_0)$ and $\mbf{V}_Q^1$, and that $M \geq 10$,
	\begin{equation*}
		t - s
		\geq 
		t - s_0 
		\geq 
		t_1 - t_0 - \abs{t_0 - s_0} - \abs{t - t_1}
		\geq 
		(8\tau r)^2 - (4\tau r)^2 - \frac{(\tau r)^2}{2M}
		\geq
		(\tau r)^2.
	\end{equation*}
	Moreover, by the definitions of $\mbf{V}_Q^1, (Z_1, t_1), (y_0, s_0)$ and $\mbf{K}^-_{\tau r}(y_0, s_0)$, it holds
	\begin{equation*}
		\abs{t-s}
		\leq 
		\abs{t-t_1} + \abs{t_1-t_0} + \abs{t_0-s_0} + \abs{s_0-s}
		\lesssim 
		(\tau r)^2.
	\end{equation*}
	All these provide a lower bound for the heat kernel (see  \eqref{eq:heat_kernel}), so that our choice of $\mu$ yields
	\begin{equation*}
		g_Q(X, t) 
		\gtrsim 
		\iiint_{\mbf{K}^-_{\tau r}(y_0, s_0)} (\tau r)^{-n} \, d\mu(Y, s) 
		=
		(\tau r)^{-n} \mu(\mbf{K}^-_{\tau r}(y_0, s_0))
		\approx
		1.
	\end{equation*}
	
	\textbf{Substep 2.3: $g_Q$ is small away from the boundary.} Fix $\mbf{X} \in \mbf{V}_Q^2$. Then, 
	\begin{equation*}
		\delta(\mbf{X})
		\geq 
		\delta(Z_2, t_2) - \norm{\mbf{X} - (Z_2, t_2)}
		\geq 
		\frac34 r - \frac12 \theta r
		\geq 
		\frac12 r,
	\end{equation*}
	which implies that for any $\mbf{Y} \in \mbf{K}^-_{\tau r} (y_0, s_0)$ we have 
	\begin{equation*}
		\norm{\mbf{X} - \mbf{Y}} 
		\geq 
		\norm{\mbf{X} - (y_0, s_0)} - \norm{(y_0, s_0) - \mbf{Y}}
		\geq 
		\frac{r}{2} - 2 \tau r
		\geq 
		\frac{r}{4}
	\end{equation*}
	because $\tau \leq \tau_0 \leq 1/8$. Then, the bound \eqref{eq:bound_gamma} for the heat kernel and the choice of $\mu$ imply 
	\begin{equation*}
		g_Q(\mbf{X}) 
		\lesssim 
		\iiint_{\mbf{K}^-_{\tau r}(y_0, s_0)} r^{-n} \, d{\mu}(\mbf{Y}) 
		=
		r^{-n} \mu(\mbf{K}^-_{\tau r}(y_0, s_0))
		\approx  
		\tau^n.
	\end{equation*} 
	
	\textbf{Substep 2.4: $g_Q$ oscillates.} As an immediate consequence of Substeps 2.2 and 2.3, if $\tau$ is small enough, then it holds
	\begin{equation} \label{eq:oscillation_g}
		g_Q(\mbf{Z}_1') - g_Q(\mbf{Z}_2') 
		\gtrsim 
		1, 
		\qquad 
		\forall \; \mbf{Z}_1' \in \mbf{V}_Q^1, \; \mbf{Z}_2' \in \mbf{V}_Q^2.
	\end{equation}
	

	\textbf{Step 3: restricting to $E_Q$.} Although the potential $g_Q$ shows the desired oscillations, it is supported on the whole $Q$. We need a function which behaves similarly, but is supported only on $E_Q \subset Q$.
	
	\textbf{Substep 3.1: definition of $f_Q$ and $u_Q$.}	
	Define now 
	\begin{equation*}
		f_Q := g_Q \, \mathbf{1}_{E_Q}, 
		\qquad 
		u_Q(\mbf{X}) 
		:=
		\iint_\Sigma f_Q \, d\omega^{\mbf{X}}
		=
		\iint_{E_Q} g_Q \, d\omega^{\mbf{X}}, 
		\quad \mbf{X} \in \Omega.
	\end{equation*}
	Since $g_Q$ is continuous and caloric in $\Omega$ (because $\text{spt}(\mu) \subset \Omega^c$ and $\Gamma$ is caloric in its first entry), and $\Omega$ is Wiener regular (for it satisfies the TBCDC, see \cite[Lemma 3.12]{MP}, \cite[Lemma 3.24]{HHK}), we can recover it from its boundary values via integration against the caloric measure:
	\begin{equation*}
		g_Q(\mbf{X}) 
		=
		\iint_\Sigma g_Q \, d\omega^{\mbf{X}}, 
		\quad \mbf{X} \in \Omega.
	\end{equation*} 
	These allow us to estimate the difference between $u_Q$ and $g_Q$ by using \eqref{eq:bound_g_Q}, for $\mbf{X} \in \Omega$,
	\begin{equation} \label{eq:diff_u_g}
		\abs{u_Q(\mbf{X}) - g_Q(\mbf{X})}
		=
		\bigg| \iint_{\Sigma \setminus E_Q} g_Q \, d\omega^{\mbf{X}} \bigg|
		\leq
		\omega^{\mbf{X}} (\Sigma \setminus E_Q)
		=
		\omega^{\mbf{X}} (\Sigma \setminus Q) + \omega^{\mbf{X}} (Q \setminus E_Q).
	\end{equation}
	
	\textbf{Substep 3.2: smallness of the first term in \eqref{eq:diff_u_g}.}
	First note that H\"{o}lder continuity at the boundary (Lemma~\ref{lem:holder}) and the hypothesis \eqref{eq:hyp_epsilon_1} imply
	\begin{equation} \label{eq:small_holder}
		\omega^{\mbf{X}} (\Sigma \setminus Q) 
		\lesssim 
		(\varepsilon')^\gamma.
	\end{equation} 
	As a technical note, since Lemma~\ref{lem:holder} is stated only for compactly supported boundary data, one should apply it to the truncated sets $\Sigma \cap C_N \setminus Q$, and then let $N \to \infty$ because constants are independent of $N$.
	
	\textbf{Substep 3.3: smallness of the second term in \eqref{eq:diff_u_g}.}
	On the other hand, we claim that, using a Harnack chain argument (see Lemma~\ref{lem:harnack}) along $\mathcal{P}$, and later the hypothesis \eqref{eq:hyp_epsilon_2}, it holds
	\begin{equation} \label{eq:small_size_E_Q}
		\omega^{(X, t)}(Q \setminus E_Q)
		\leq
		C(\tau) \, \omega^{(P_Q, s_Q)}(Q \setminus E_Q)
		\leq 
		C(\tau) \, \varepsilon',
		\qquad 
		(X, t) \in \mbf{V}_Q^1 \cup \mbf{V}_Q^2.
	\end{equation}
	
	Let us show how this Harnack argument along a parabola works. The idea is to work in small enough time increments to always stay close to the parabola, and that we only have to apply Harnack's inequality a limited amount of times. We abbreviate $u(\mbf{X}) := \omega^{\mbf{X}}(Q \setminus E_Q)$.
	
	Indeed, consider first the case that $(X, t) \in \mbf{V}_Q^1$. Let $(\widetilde{X}_1, \widetilde{t}_1) \in \mathcal{P}$ the point at which $\mathcal{P}$ exits $\C_{\tau r}(Z_1, t_1)$, namely $||(\widetilde{X}_1, \widetilde{t}_1) - (Z_1, t_1)|| = \tau r$ and $\widetilde{t}_1 \geq t_1$. Let us show in all detail that 
	\begin{equation} \label{eq:harnack_chain}
		u(X, t) \leq C_{\mathrm{H}} u(\widetilde{X}_1, \widetilde{t}_1), 
		\quad \text{and} \quad 
		\widetilde{t}_1 - t_1 \geq \frac23 \frac{(\tau r)^2}{M},
	\end{equation}
	where $C_{\mathrm{H}} > 0$ only depends on $n$. 
	\begin{itemize}
		\item Let us consider the cylinder and its (spatial) subset
		\begin{equation*}
			D \times (T_{\min}, T_{\max}) := \C_{\frac32 \tau r} (Z_1, t_1)
			\quad  \text{and} \quad  
			D' := C_{\tau r} (Z_1).
		\end{equation*} 
		Then it is clear that $\dist (D', \partial D) \geq \tau r / 2$. Moreover, the cylinder is contained in $\Omega$ because $\delta(Z_1, t_1) \geq 2 \tau r$ by definition of $(Z_1, t_1)$ and our choice of $(Z_0, t_0)$. Hence $\omega^{(\cdot)}(Q \setminus E_Q)$ is caloric in the cylinder.
		
		\item Since, by definition of $\mbf{V}_Q^1$ with $M \geq 10$, it holds $\abs{t - t_1} \leq \frac{(\tau r)^2}{2M} \leq \frac{(\tau r)^2}{20}$, we have also 
		\begin{equation*}
			t-T_{\min}
			= 
			t - \left(t_1 - \Big(\frac32 \tau r\Big)^2\right) 
			\geq 
			\left(\frac94 - \frac{1}{20}\right) (\tau r)^2 
			\geq 
			(\tau r)^2.
		\end{equation*}
		
		\item Moreover, by definition of $\mbf{V}_Q^1$ and our choice of $(\widetilde{X}_1, \widetilde{t}_1)$, we can estimate
		\begin{equation*}
			|X - \widetilde{X}_1| + |t - \widetilde{t}_1|^{1/2} 
			\leq 
			\norm{(X, t) - (Z_1, t_1)} + \|(Z_1, t_1) - (\widetilde{X}_1, \widetilde{t}_1)\|
			\leq 
			3\tau r.
		\end{equation*}
		
		\item Furthermore, it also holds $\widetilde{t}_1 - t_1 \geq \frac23 \frac{(\tau r)^2}{M}$, which is a quantitative reflection of the fact that $\mathcal{P}$ exits $\mbf{V}_Q^1$ through its right face. Indeed, if this were false, then by \eqref{eq:lipschitz}, 
		\begin{align*}
			\|(\widetilde{X}_1, \widetilde{t}_1) - (Z_1, t_1)\|
			& =
			|\widetilde{X}_1 - Z_1| + |\widetilde{t}_1 - t_1|^{1/2}
			\leq 
			\frac{M}{\tau r} |\widetilde{t}_1 - t_1| + |\widetilde{t}_1 - t_1|^{1/2}
			\\ & <
			\frac23 \tau r + \left( \frac{2}{3M} \right)^{1/2} \tau r
			< 
			\tau r
		\end{align*}
		noting that $M \geq 10$, which would be a contradiction with the choice of $(\widetilde{X}_1, \widetilde{t}_1)$.
	\end{itemize}
	
	With all the above in mind (and recalling that $\ell(Q) \leq 1$), the time-directed Harnack's inequality from Lemma~\ref{lem:harnack} yields
	\begin{equation*}
		u(X, t)
		\leq 
		u(\widetilde{X}_1, \widetilde{t}_1) \exp \left[ C(n) \left( \frac{(2\tau r)^2}{ \frac23 \frac{(\tau r)^2}{M}} + \frac{(2\tau r)^2}{\min\left\{1, (\tau r)^2, (\frac{\tau r}{2})^2\right\}} + 1 \right) \right]
		\lesssim_n 
		u(\widetilde{X}_1, \widetilde{t}_1),
	\end{equation*}
	which finishes the proof of \eqref{eq:harnack_chain}.
	
	Now that this first step is clear, let us continue in a similar fashion. Indeed, find $(\widetilde{X}_2, \widetilde{t}_2) \in \mathcal{P}$ with $\widetilde{t}_2 \geq \widetilde{t}_1$ such that $\|(\widetilde{X}_2, \widetilde{t}_2) - (\widetilde{X}_1, \widetilde{t}_1)\| = \tau r$. By the very same arguments leading to \eqref{eq:harnack_chain}, it holds $u(\widetilde{X}_1, \widetilde{t}_1) \leq C_{\mathrm{H}} u(\widetilde{X}_2, \widetilde{t}_2)$ and $\widetilde{t}_2 - \widetilde{t}_1 \geq \frac23 \frac{(\tau r)^2}{M}$. 
	
	Iterating this, we construct a sequence of points $(\widetilde{X}_j, \widetilde{t}_j)$ for which 
	\begin{equation*}
		u(\widetilde{X}_j, \widetilde{t}_j) 
		\leq 
		C_{\mathrm{H}} u(\widetilde{X}_{j+1}, \widetilde{t}_{j+1})
		\quad \text{and} \quad 
		\widetilde{t}_{j+1} - \widetilde{t}_j \geq \frac23 \frac{(\tau r)^2}{M}
	\end{equation*}
	Moreover, since by definition of $(Z_0, t_0)$ and $(Z_1, t_1)$, and \eqref{def:y'}, it holds 
	\begin{equation*}
		s_Q - t_1
		\leq 
		s_Q - t_0
		\leq 
		s_Q - s'
		\lesssim 
		r^2, 
	\end{equation*}
	we only need $j_0 \lesssim \tau^{-2}$ steps for our sequence of $\widetilde{t}_{j_0}$ to surpass $s_Q$, i.e. $\widetilde{t}_{j_0} \geq s_Q$. Therefore, 
	\begin{equation*}
		u(X, t) 
		\leq 
		C_{\mathrm{H}} u(\widetilde{X}_1, \widetilde{t}_1) 
		\leq
		\ldots 
		\leq 
		C_{\mathrm{H}}^{j_0-1} u(\widetilde{X}_{j_0-1}, \widetilde{t}_{j_0-1}) 
		\leq 
		C_{\mathrm{H}}^{j_0} u(P_Q, s_Q) 
		= 
		C(\tau) \, u(P_Q, s_Q),
	\end{equation*}
	which finishes the justification of \eqref{eq:small_size_E_Q} for $(X, t) \in \mbf{V}_Q^1$.
	
	Now let us turn to the case that $(X, t) \in \mbf{V}_Q^2$. In this case, the argument is easier because only one application of Harnack's inequality is needed. Let us be more precise.
	\begin{itemize}
		\item Consider the cylinder $D \times (T_{\min}, T_{\max}) := \C_{r/2}(Z_2, t_2)$ and $D' := C_{r/3}(Z_2)$. Then, clearly $\dist (D', \partial D) \geq r/6$. Moreover, the cylinder is contained in $\Omega$ because 
		\begin{equation*}
			\delta(Z_2, t_2) \geq \delta(P_Q, s_Q) - \norm{(Z_2, t_2) - (P_Q, s_Q)} = r - \frac{r}{4} = \frac34 r.
		\end{equation*}
		Also, $P_Q \in D'$ because $\norm{(Z_2, t_2) - (P_Q, s_Q)} = r/4 < r/3$. The same applies to the spatial component of any point of $\mbf{V}_Q^2$ by definition of this set, because $\theta r / 2 < r/3$. 
		
		\item We also have, since $\theta < 1/10$, 
		\begin{equation*}
			t - T_{\min} = t - \left(t_2 - \left(\frac{r}{2}\right)^2\right) \geq \frac{r^2}{4} - \abs{t-t_2} \geq \frac{r^2}{4} - \left( \frac{\theta r}{2} \right)^2 \geq \frac{r^2}{8}.
		\end{equation*}
		
		\item Moreover, by definition of $\mbf{V}_Q^2$ and $(Z_2, t_2)$, it holds 
		\begin{equation*}
			\norm{(X, t) - (P_Q, s_Q)} \leq \norm{(X, t) - (Z_2, t_2)} + \norm{(Z_2, t_2) - (P_Q, s_Q)} \leq r.
		\end{equation*}
		
		\item Lastly, \eqref{eq:s_Q-t_2} yields 
		\begin{equation*}
			s_Q - t \geq s_Q - t_2 - \abs{t-t_2} \geq (\theta r)^2 - \left( \frac{\theta}{2} r \right)^2 = \frac34 (\theta r)^2.
		\end{equation*}
		This is the key estimate motivating the construction of $\mbf{V}_Q^2$. 
	\end{itemize}
	With all these, Harnack's inequality from Lemma~\ref{lem:harnack} yields $u(X, t) \lesssim u(P_Q, s_Q)$ with uniform constants, which finishes the proof of \eqref{eq:small_size_E_Q} also for points in $\mbf{V}_Q^2$.
	
	\textbf{Substep 3.4: $u_Q$ oscillates.}
	Putting together \eqref{eq:oscillation_g}, \eqref{eq:diff_u_g}, \eqref{eq:small_holder} and \eqref{eq:small_size_E_Q}, we have shown that  if we choose $\tau$ small enough, and then $\varepsilon' = \varepsilon'(\tau)$ small enough, it holds
	\begin{equation} \label{eq:u_Q_oscillates}
		u_Q(\mbf{Z}_1') - u_Q(\mbf{Z}_2') 
		\gtrsim 
		1, 
		\qquad 
		\forall \; \mbf{Z}_1' \in \mbf{V}_Q^1, \; \mbf{Z}_2' \in \mbf{V}_Q^2.
	\end{equation}
	

	\textbf{Step 4: creating the gradient and definition of $\mbf{V}_Q$.} We have successfully constructed an oscillating function $u_Q$. It remains to check that its gradient reflects these oscillations, in a region $\mbf{V}_Q$ that stays away from the boundary, as suggested in the statement.
	
	\textbf{Substep 4.1: Poincaré along the parabola.} 
	We claim that the estimate \eqref{eq:u_Q_oscillates} and a Poincaré-like argument integrating along parabolas parallel to $\mathcal{P}$ imply that 
	\begin{multline} \label{eq:poincare_parabola}
		\left(\frac{\tau r}{M} \right)^{n+2} 
		\leq
		\abs{\mbf{V}_Q^1} 
		\lesssim 
		\iiint_{\mbf{V}_Q^1} \abs{u_Q(X, t) - u_Q(X + Z_2 - Z_1, t + t_2 - t_1)}^2 dX dt
		\\ \lesssim 
		\tau^{-2} r^2 \iiint_{\widetilde{\mathcal{P}} + \C_{\tau r}} \abs{\nabla u_Q(X, t)}^2 dX dt 
		+ r^4 \iiint_{\widetilde{\mathcal{P}} + \C_{\tau r}} \abs{\partial_t u_Q(X, t)}^2 dX dt,
	\end{multline}
	where we denote 
	\begin{equation*}
		\widetilde{\mathcal{P}} 
		:=
		\{ (Z, t) \in \mathcal{P} : t_1 \leq t \leq t_2 \}.
	\end{equation*}
	
	Let us add more details on the proof of \eqref{eq:poincare_parabola}.
	First note that the second inequality is true because if $(X, t) \in \mbf{V}_Q^1$, then $(X, t) + (Z_2, t_2) - (Z_1, t_1) \in \mbf{V}_Q^2$ because the ``radius'' of $\mbf{V}_Q^2$ is larger than that of $\mbf{V}_Q^1$ ($\tau r < \frac12 \theta r$ if we choose $\tau$ small enough).
	Next, let us justify the last inequality in \eqref{eq:poincare_parabola}. Integrating along parabolas parallel to $\mathcal{P}$ using the fundamental theorem of calculus in 1 variable,
	\begin{align*}
		\iiint_{\mbf{V}_Q^1} & \abs{u_Q(X, t) - u_Q(X + Z_2 - Z_1, t + t_2 - t_1)}^2 dX dt
		\\ & =
		\iiint_{\mbf{V}_Q^1 - (Z_1, t_1)} \abs{u_Q(X + Z_1, t + t_1) - u_Q(X + Z_2, t + t_2)}^2 dX dt
		\\ & \lesssim 
		r^2 \iiint_{\mbf{V}_Q^1 - (Z_1, t_1)} \int_{t_1}^{t_2} \bigg( \nabla u_Q((X, t) + \bm{\gamma}(s)) \cdot \dot{\gamma}(s) + \partial_t u_Q((X, t)+\bm{\gamma}(s)) \bigg)^2 ds \, dX dt 
	\end{align*}	
	where in the inequality, we have used Cauchy-Schwarz and the bound $t_2-t_1 \leq s_Q - s' \lesssim r^2$ using the definitions of $(Z_2, t_2), (Z_1, t_1)$ and \eqref{def:y'}. We also denote $\dot{\gamma}(s) = d\gamma(s) / ds$, which can be estimated by \eqref{eq:lipschitz} to obtain	
	\begin{align*}
		\iiint_{\mbf{V}_Q^1} & \abs{u_Q(X, t) - u_Q(X + Z_2 - Z_1, t + t_2 - t_1)}^2 dX dt
		\\ & \lesssim 
		r^2 \! \iiint_{\mbf{V}_Q^1 - (Z_1, t_1)} \int_{t_1}^{t_2} \!\! \left( \abs{\nabla u_Q((X, t) + \bm{\gamma}(s))}^2 \abs{\dot{\gamma}(s)}^2 + \abs{\partial_t u_Q((X, t)+\bm{\gamma}(s))}^2 \right) ds \, dX dt 
		\\ & \lesssim
		\tau^{-2} \iiint_{\mbf{V}_Q^1 - (Z_1, t_1)} \int_{t_1}^{t_2} \abs{\nabla u_Q((X, t) + \bm{\gamma}(s))}^2 ds \, dX dt 
		\\ & \quad + r^2 \iiint_{\mbf{V}_Q^1 - (Z_1, t_1)} \int_{t_1}^{t_2} \abs{\partial_t u_Q((X, t)+\bm{\gamma}(s))}^2 ds \, dX dt
		\\ & \lesssim 
		\tau^{-2 }r^2 \iiint_{\widetilde{\mathcal{P}} + \C_{\tau r}} \abs{\nabla u_Q(X', t')}^2 dX' dt'
		+ r^4 \iiint_{\widetilde{\mathcal{P}} + \C_{\tau r}} \abs{\partial_t u_Q(X', t')}^2 dX' dt',
	\end{align*}
	where the last step is justified as follows. First, we have used the fact that the bundle of parabolas satisfies 
	\begin{equation*}
		\Big\{ (X, t) + \bm{\gamma}(s) : (X, t) \in V_Q^1 - (Z_1, t_1) \text{ and } s \in [t_1, t_2] \Big\}
		\subset 
		\widetilde{\mathcal{P}} + \C_{\tau r}
	\end{equation*}
	just by the definitions of $\mbf{V}_Q^1$ and $\widetilde{\mathcal{P}}$. These parabolas may overlap (their curvature changes with time), but we can control the integrals using Fubini and a change of variables
	\begin{align*}
		\iiint_{\mbf{V}_Q^1 - (Z_1, t_1)} \int_{t_1}^{t_2} & \abs{\nabla u_Q((X, t) + \bm{\gamma}(s))}^2 ds \, dX dt
		\\ & =
		\int_{t_1}^{t_2} \iiint_{\mbf{V}_Q^1 - (Z_1, t_1)} \abs{\nabla u_Q((X, t) + \bm{\gamma}(s))}^2  dX dt ds
		\\ & =
		\int_{t_1}^{t_2} \iiint_{\mbf{V}_Q^1 - (Z_1, t_1)  + \bm{\gamma}(s)} \abs{\nabla u_Q(X', t')}^2  dX' dt' ds
		\\ & \leq 
		\int_{t_1}^{t_2} \iiint_{\widetilde{\mathcal{P}} + \C_{\tau r}} \abs{\nabla u_Q(X', t')}^2  dX' dt' ds
		\\ & \lesssim 
		r^2 \iiint_{\widetilde{\mathcal{P}} + \C_{\tau r}} \abs{\nabla u_Q(X', t')}^2  dX' dt',
	\end{align*}
	where in the last step we have used $t_2-t_1 \lesssim r^2$ as before. Note that the key point is to use Fubini appropriately so that the changes of variable are mere translations. The term with time derivatives goes similarly, and this finishes the justification of \eqref{eq:poincare_parabola}. 
	
	On the other hand, using that both $u_Q$ and $\partial_{x_i} u_Q$ solve the heat equation, we can use Caccioppoli's estimate (Lemma~\ref{lem:caccioppoli}) to get, covering $\widetilde{\mathcal{P}} + \C_{\tau r}$ by a family of boxes $\{\mbf{I}\}_{\mbf{I} \in \mathcal{I}}$ with sidelength $\ell(\mbf{I}) = c(n) \tau r$, where $c(n) > 0$ is small enough so that $2 \mbf{I} \subset \widetilde{\mathcal{P}} + \C_{2 \tau r}$ and these dilated boxes have bounded overlap, so that we can compute
	\begin{align} \label{eq:caccioppoli_parabola}
		\iiint_{\widetilde{\mathcal{P}} + \C_{\tau r}} \abs{\partial_t u_Q(\mbf{X})}^2 d\mbf{X}
		& \leq 
		\sum_{\substack{\mbf{I} \in \mathcal{I} \\ \mbf{I} \cap (\widetilde{\mathcal{P}} + \C_{\tau r}) \neq \emptyset }} \iiint_{\mbf{I}} \abs{\partial_t u_Q(\mbf{X})}^2 d\mbf{X}
		\nonumber
		\\ & \lesssim
		\sum_{i=1}^n \sum_{\substack{\mbf{I} \in \mathcal{I} \\ \mbf{I} \cap (\widetilde{\mathcal{P}} + \C_{\tau r}) \neq \emptyset }} \iiint_{\mbf{I}} \abs{\partial_{x_i} \partial_{x_i} u_Q(\mbf{X})}^2 d\mbf{X}
		\nonumber
		\\ & \lesssim 
		\sum_{i=1}^n \sum_{\substack{\mbf{I} \in \mathcal{I} \\ \mbf{I} \cap (\widetilde{\mathcal{P}} + \C_{\tau r}) \neq \emptyset }} \ell(\mbf{I})^{-2} \iiint_{2\mbf{I}} \abs{\partial_{x_i} u_Q(\mbf{X})}^2 d\mbf{X}
		\nonumber
		\\ & \lesssim 
		(\tau r)^{-2} \iiint_{\widetilde{\mathcal{P}} + \C_{2 \tau r}} \abs{\nabla u_Q(\mbf{X})}^2 d\mbf{X}.
	\end{align}
	
	The estimates \eqref{eq:poincare_parabola} and \eqref{eq:caccioppoli_parabola} yield 
	\begin{equation} \label{eq:GMT_without_delta}
		\iiint_{\widetilde{\mathcal{P}} + \C_{2 \tau r}} \abs{\nabla u_Q(\mbf{X})}^2 d\mbf{X}
		\gtrsim 
		\tau^{n+4} r^n.
	\end{equation}
	
	\textbf{Substep 4.2: definition of $\mbf{V}_Q$.} We finally define (see Figure~\ref{fig:V_Q})
	\begin{equation*}
		\mbf{V}_Q := \widetilde{\mathcal{P}} + \C_{2 \tau r}.
	\end{equation*}
	
	\begin{figure}[h!]
		\centering 
		\resizebox{0.9\textwidth}{!}{

		}
		\caption{Diagram showing the shape of $\mbf{V}_Q$, adapted to the shape of the parabola $\mathcal{P}$ that goes from $\mbf{V}_Q^1$ to $\mbf{V}_Q^2$.}
		\label{fig:V_Q}
	\end{figure}

	Lastly, we are left to show the bounds for $\mbf{V}_Q$ asserted in \eqref{eq:loc_V_Q}. First, let us note that $\delta(\mbf{Z}) \geq 4 \tau r$ for each $\mbf{Z} \in \widetilde{\mathcal{P}}$ by the definition of $(Z_0, t_0)$, which trivially implies that $\delta(\mbf{X}) \geq  \tau r$ for every $\mbf{X} \in \mbf{V}_Q$. Moreover, this allows us to insert the distance to the boundary in the estimate \eqref{eq:GMT_without_delta}, obtaining
	\begin{equation*}
		\iiint_{\mbf{V}_Q} \abs{\nabla u_Q(\mbf{X})}^2 \delta(\mbf{X}) \, d\mbf{X}
		\gtrsim 
		\tau^{n+5} r^{n+1}
		=
		\tau^{n+5} (\delta(\mbf{P}_Q))^{n+1}
		\approx
		\tau^{n+5} (\varepsilon')^{n+1} \sigma(Q)
	\end{equation*}
	where we have also used the definition of $r$, \eqref{eq:hyp_epsilon_1} and p-ADR. This shows the main estimate claimed in the statement.
	
	On the other hand, for any $(Z, s) \in \widetilde{\mathcal{P}}$, we have, by \eqref{def:y'} and \eqref{eq:aperture}, recalling $(y', s') = (0, 0)$,
	\begin{equation*}
		\norm{(Z, s) - (y', s')} 
		=
		\abs{\gamma(s)} + s^{1/2}
		= 
		(1+\alpha^{-1/2}) \, s^{1/2} 
		\lesssim 
		s^{1/2}
		\leq
		s_Q^{1/2} 
		\leq 
		\norm{(P_Q, s_Q) - (y', s')} 
		\leq
		Cr.
	\end{equation*} 
	Therefore, for any $\mbf{X} \in \mbf{V}_Q$, we may find some $\mbf{Z} \in \widetilde{\mathcal{P}}$ with $\norm{\mbf{X} - \mbf{Z}} \leq 2 \tau r$ (by definition of $\mbf{V}_Q$), so that recalling also \eqref{eq:hyp_epsilon_1} and \eqref{def:y'}, it holds
	\begin{multline*}
		\norm{\mbf{X} - \mbf{x}_Q} 
		\leq 
		\norm{\mbf{X} - \mbf{Z}} + \norm{\mbf{Z} - (y', s')} + \norm{(y', s') - \mbf{P}_Q} + \norm{\mbf{P}_Q - \mbf{x}_Q}
		\\ \leq 
		2 \tau r + Cr + Cr + c_{\mathrm{B}} \ell(Q)
		\leq 
		C \varepsilon' \ell(Q) + c_{\mathrm{B}} \ell(Q)
		\leq 
		\ell(Q)
	\end{multline*}
	once we consider $\varepsilon'$ and $c_{\mathrm{B}}$ small enough. This finishes the proof of \eqref{eq:loc_V_Q}.
\end{proof}

\begin{remark}
	Under more favorable background hypotheses (see \cite[Section 3]{HHK}), like TBADR (time-backwards ADR, see \cite{GH_RH}) or TBHCC (time-backward Hausdorff content condition, see \cite{HHK}), the proof of Proposition~\ref{prop:GMT} may use layer potentials $g_Q$ in Step 2 that are maybe more recognizable for the reader:
	\begin{itemize}
		\item In presence of the TBADR assumption, one can define 
		\begin{equation*}
			g_Q(X, t) 
			:=
			\frac{1}{\tau r} \iint_{\C_{\tau r}(y_0, s_0) \cap \Sigma \cap \T_{<s_0}} \Gamma(X, t; Y, s) \, d\sigma(Y, s),
			\qquad 
			(X, t) \in \R^{n+1},
		\end{equation*}
		and the estimates in Substeps 2.2, 2.3 follow because $\sigma(\C_{\tau r}^-(y_0, s_0) \cap \Sigma \cap \T_{<s_0}) \approx (\tau r)^{n+1}$ by TBADR, and the estimate \eqref{eq:bound_g_Q} can be verified combining p-ADR (only the upper bound) and a decomposition in annuli (similar to \cite[p. 1558]{GH_RH}).
		
		\item In turn, in presence of the TBHCC with exponent $\varepsilon_0$ (as in \cite[Definition 3.5]{HHK}), one can define 
		\begin{equation*}
			g_Q(X, t) 
			:=
			\frac{1}{(\tau r)^{\varepsilon_0}} \iiint_{\mbf{K}_{\tau r}^-(y_0, s_0)} \Gamma(X, t; Y, s) \, d\widetilde{\mu}(Y, s),
			\qquad 
			(X, t) \in \R^{n+1},
		\end{equation*}
		where $\widetilde{\mu}$ is the Frostman measure naturally associated to the TBHCC condition, as in \cite[Lemma 3.13]{HHK}, so that it holds $\widetilde{\mu}(\mbf{K}_{\tau r}^-(y_0, s_0)) \approx (\tau r)^{n+\varepsilon_0}$, and therefore all the estimates in Step 2 are still true.
	\end{itemize}
\end{remark}



\section{The corona decomposition for the caloric measure} \label{sec:corona}

In this section, we show that if p-CME holds in $\Omega$, then we can construct a corona decomposition for $\omega$ with respect to $\sigma$. This is a multiscale condition asserting, roughly speaking, that caloric measure and surface measure are quantitatively comparable on a large collection of dyadic cubes.

The p-CME assumption enters precisely in the proof of the packing condition for the low density cubes. Once Proposition~\ref{prop:GMT} is available, the argument follows the combinatorial argument introduced in \cite{GMT}, where the oscillating caloric functions control the contribution of each individual cube, and p-CME allows one to sum across these cubes.

At this point, one might hope to proceed exactly as in \cite{GMT}. However, a fundamental new difficulty arises in the parabolic setting. For the subsequent construction of approximating domains in Proposition~\ref{prop:WHSA}, it is crucial that the poles of caloric measure lie far in the future, so that the associated adjoint Green function controls the entire stopping-time region. On the other hand, it is also essential for Proposition~\ref{prop:GMT} that poles are very close to the boundary. Consequently, the one-pole argument of \cite{GMT} cannot be adapted directly.

In Theorem~\ref{th:corona}, we solve this difficulty by running the stopping-time argument with two poles simultaneously. The two-pole construction can subsequently be reduced to a one-pole formulation through applications of the time-directed Harnack inequality.

\begin{definition}[Corona decomposition for caloric measure]\label{def:corona}
Let $\Omega \subset \ree$ be an open set such that $\Sigma := \pom$ is p-ADR (see Definition~\ref{def:ADR}). We say that the \textit{caloric measure admits a corona decomposition}\footnote{The reader should not confuse this corona decomposition with that from Definition~\ref{def:unilateralcorona}: both are corona decompositions, but one measures how frequently $\Sigma = \pom$ can be well approximated by regular Lip(1,1/2) graphs, and the other checks the scales in which, in some appropriate sense, $\omega$ and $\sigma$ are comparable.} if 
there exists $C_{\mathrm{cor}} > 0$ such that:
	\begin{enumerate}
		\item The dyadic grid $\D$ (from Lemma~\ref{lem:cubes}) can be decomposed as a disjoint union $\D = \G \sqcup \B$, where the good cubes are organized into subtrees $\G = \bigsqcup_{\S \in \mathcal{S}} \S$, and each tree $\S$ is semi-coherent (see Definition~\ref{def:unilateralcorona}),
		\item The maximal cubes $Q(\S)$ of the subtrees $\S$, and the bad cubes $\B$, satisfy the Carleson packing condition
		\begin{equation*}
			\sum_{\substack{\S \in \mathcal{S} \\ Q(\S) \subset Q}} \sigma(Q(\S)) + \sum_{\substack{Q' \in \B \\ Q' \subset Q}} \sigma(Q')
			\leq 
			C_{\mathrm{cor}} \, \sigma(Q), 
			\qquad 
			\forall \, Q \in \D.
		\end{equation*}
		\item For each tree $\S \in \mathcal{S}$ there exists a pole $\mbf{Y}_\S = (Y_\S, s_\S)$ such that 
		\begin{enumerate}
			\item The pole is far in the future
			\begin{equation*}
				s_\S \geq T_{\max}(Q(\S)) + (500\diam(Q(\S)))^2 ,
			\end{equation*}
			and accordingly far from the boundary
			\begin{equation*}
				\delta(\mbf{Y}_\S) \geq 100\diam(Q(\S)).
			\end{equation*}
			\item $\omega^{\mbf{Y}_\S}(Q(\S)) \geq C_{\mathrm{cor}}^{-1}$.
			\item The normalized measure 
			\begin{equation*}
				\mu := \frac{\sigma(Q(\S))}{\omega^{\mbf{Y}_\S}(Q(\S))} \omega^{\mbf{Y}_\S}
			\end{equation*}
			satisfies 
			\begin{equation*}
				\frac{1}{C_{\mathrm{cor}}}
				\leq 
				\frac{\mu(Q)}{\sigma(Q)} 
				\leq 
				\left( \iint_Q (\mathcal{M}_\sigma(\restr{\mu}{2Q(\S)})(\mbf{x}))^{1/2} d\sigma(\mbf{x}) \right)^2
				\leq 
				C_{\mathrm{cor}},
				\qquad 
				\forall Q \in \S,
			\end{equation*}
			where $\mathcal{M}_\sigma$ denotes the Hardy-Littlewood maximal operator with respect to the underlying measure $\sigma$, namely 
			\begin{equation*}
				\mathcal{M}_\sigma (\nu)(\mbf{x})
				:=
				\sup_{\substack{\mbf{y} \in \Sigma, r > 0 : \\ \mbf{x} \in \C_r(\mbf{y})}} \frac{\nu(\C_r(y))}{\sigma(\C_r(y))}
			\end{equation*}
			for a measure $\nu$ over $\Sigma$, and $\mbf{x} \in \Sigma$.
		\end{enumerate}
	\end{enumerate} 
\end{definition}

\begin{theorem}[p-CME implies corona decomposition for caloric measure] \label{th:corona}
	Let $\Omega \subset \ree$ be an open set satisfying the interior corkscrew condition (see Definition~\ref{def:corkscrew}) and the TBCDC (see Definition~\ref{def:TBCDC}), so that $\Sigma := \pom$ is p-ADR (see Definition~\ref{def:ADR}). If p-CME holds in $\Omega$ (see Definition~\ref{def:CME}), then the caloric measure admits a corona decomposition with constant $C_{\mathrm{cor}} = C_{\mathrm{cor}}(n, C_{\mathrm{CKS}}, C_{\mathrm{ADR}}, C_{\mathrm{CDC}}, C_{\mathrm{CME}})$. 
\end{theorem}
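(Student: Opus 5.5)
We follow the combinatorial scheme of \cite{GMT}, with the modification announced before Definition~\ref{def:corona}: the stopping-time argument is run with two poles simultaneously. Fix a top cube $R\in\D$ (and, by a standard reduction using ADR, it suffices to build the corona decomposition below each $R$ with uniform constants and concatenate). We attach to $R$ two poles: a \emph{far pole} $\mbf{Y}_R$ with $s_R\ge T_{\max}(R)+(500\diam R)^2$ and $\delta(\mbf{Y}_R)\ge 100\diam R$, obtained from the interior corkscrew condition at a larger scale in the future, and a \emph{near pole} $\mbf{P}_R\in\C_{c_{\mathrm B}\ell(R)}(\mbf{x}_R)\cap\Omega$ with $\delta(\mbf{P}_R)\approx\varepsilon'\ell(R)$, again given by corkscrews at scale $\approx\varepsilon'\ell(R)$. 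By Bourgain's estimate (Lemma~\ref{lem:bourgain}) $\omega^{\mbf{P}_R}(R)\gtrsim 1$, and by time-directed Harnack chains (Lemma~\ref{lem:harnack}, the pole $\mbf{Y}_R$ being far in the future) one gets $\omega^{\mbf{Y}_R}(F)\ge c(\varepsilon')\,\omega^{\mbf{P}_R}(F)$ for $F\subset R$, together with $\omega^{\mbf{Y}_R}(R)\gtrsim 1$ via Bourgain applied at a comparable scale and Harnack. Normalize $\mu_R:=\sigma(R)\omega^{\mbf{Y}_R}/\omega^{\mbf{Y}_R}(R)$ and similarly $\nu_R$ for $\mbf{P}_R$.

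Next we run the stopping time inside $R$: descending from $R$, stop at maximal $Q$ which are either \emph{high density}, $\mu_R(Q)\ge A\sigma(Q)$ (or the maximal-function quantity in (3)(c) exceeds $A$), or \emph{low density}, $\mu_R(Q)\le \varepsilon\sigma(Q)$, also checking the analogous condition for $\nu_R$. The tree $\S=\S(R)$ consists of the unstopped cubes; condition (3)(c) holds on $\S$ by construction (the upper bound for the $L^{1/2}$-averaged maximal function using Kolmogorov/weak-type $(1,1)$ of $\mathcal M_\sigma$ on the restriction to $2R$, after stopping when it becomes large), and the stopped cubes become new tops, restarting with fresh poles. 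The high density stopped cubes pack trivially: $\sum_{\HD}\sigma(Q)\le A^{-1}\sum\mu_R(Q)\le A^{-1}\sigma(R)$, and similarly for the maximal-function stopping via its weak-type bound.

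The main obstacle is packing the low-density stopping cubes, and here p-CME enters through Proposition~\ref{prop:GMT}. For $\mathrm{LD}$ cubes, the set $E_R:=R\setminus\bigcup_{Q\in\LD}Q$ carries most of the $\omega^{\mbf{P}_R}$-mass of $R$ if $\varepsilon$ is small (using the comparison between the two poles, so smallness for $\mu_R$ transfers to $\nu_R$ up to $c(\varepsilon')$). Following \cite{GMT}, one shows: if $\sum_{Q\in\LD(R)}\sigma(Q)\ge(1-\eta)\sigma(R)$ then, iterating along generations of tops $R_1\supset R_2\supset\cdots$ each with large low-density portion, Proposition~\ref{prop:GMT} applied to each $R_j$ with $E_{R_j}$ produces $f_{R_j}\lesssim\mathbf 1_{E_{R_j}}$ with $\iiint_{\mbf{V}_{R_j}}|\nabla u_{R_j}|^2\delta\gtrsim\sigma(R_j)$. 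Since the sets $E_{R_j}$ are disjoint and the regions $\mbf{V}_{R_j}$ live at scale $\approx\tau\varepsilon'\ell(R_j)$ near $R_j$, one forms $u=\sum_j\pm$ (random signs / Rademacher averaging) of the $u_{R_j}$, a bounded caloric function, and the cross terms are handled by Khintchine as in \cite[Lemma 3.3]{GMT}; p-CME for $u$ then bounds $\sum_j\sigma(R_j)\lesssim C_{\mathrm{CME}}\sigma(R_0)$. The delicate point, which I expect to be hardest, is controlling the interaction: $u_{R_j}$ must be nearly unaffected on $\mbf{V}_{R_j}$ by the functions $u_{R_i}$ from other cubes; for this I would use Hölder decay (Lemma~\ref{lem:holder}) for contributions of data supported far away or at much smaller scales, and the Harnack comparison \eqref{eq:small_size_E_Q}-type bounds for data on the complement of $E$. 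Finally, the tops whose low-density part is at most $(1-\eta)\sigma(R)$ pack by the usual geometric-series argument, yielding the Carleson packing (2) with $C_{\mathrm{cor}}$ depending only on the stated constants; the lower bound $\omega^{\mbf{Y}_\S}(Q(\S))\ge C_{\mathrm{cor}}^{-1}$ and the lower density bound in (3)(c) follow from the choice of $\mbf{Y}_R$ and the absence of low-density stopping inside $\S$.
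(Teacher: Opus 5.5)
There is a genuine gap, and it sits in your choice of the far pole. You take $\mbf{Y}_R$ with $s_R\ge T_{\max}(R)+(500\diam R)^2$ and $\delta(\mbf{Y}_R)\ge 100\diam R$ from a large-scale corkscrew. You then assert, ``by time-directed Harnack chains'', that $\omega^{\mbf{Y}_R}(F)\ge c(\varepsilon')\,\omega^{\mbf{P}_R}(F)$ for $F\subset R$ and that $\omega^{\mbf{Y}_R}(R)\gtrsim 1$.

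The hypotheses (interior corkscrews, TBCDC, p-ADR, p-CME) contain no Harnack chain condition. Nothing provides a time-directed chain from a point at distance $\approx\varepsilon'\ell(R)$ from $\Sigma$ to one at distance $\ge 100\diam R$ in its future. In fact $\Omega$ need not be connected: complements of finite unions of parallel $t$-independent hyperplanes satisfy every hypothesis. So the far pole may lie in a component whose caloric measure does not charge the part of $R$ seen by $\omega^{\mbf{P}_R}$. Bourgain's estimate cannot substitute for the chain, because for a pole with $\delta\ge 100\diam R$ it only gives mass to a surface ball much larger than $R$.

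Everything downstream rests on this comparison:
\begin{itemize}
\item transferring low-density smallness from $\mu_R$ to $\omega^{\mbf{P}_R}$, which you need to verify \eqref{eq:hyp_epsilon_2} in Proposition~\ref{prop:GMT};
\item the lower bound in (3)(c) for $\mbf{Y}_R$;
\item property (3)(b).
\end{itemize}
This is precisely the obstruction the paper's two-pole device is built to avoid.

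The missing idea is to never move the pole far, but to shrink the tops instead.
\begin{itemize}
\item The paper takes $\X_1=\X_0+(0,\lambda\ell(Q_0)^2)$ with $\lambda=\lambda(\varepsilon')$ small. A single application of Lemma~\ref{lem:harnack} in $\C_{\delta(\X_0)/2}(\X_0)$ then gives $\omega^{\X_0}(Q)\le C_{\mathrm H}\,\omega^{\X_1}(Q)$.
\item Low density is tested with $\X_0$, so Proposition~\ref{prop:GMT} applies verbatim. The $L^{1/2}$-maximal high-density condition is tested with $\X_1$. Harnack plus Bourgain then transfer the lower density bound to $\X_1$.
\item Property (a) is obtained afterwards. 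Every $Q$ in the tree has $\omega^{\X_0}(Q)>0$, and caloric measure lives in the past of its pole, so $T_{\min}(Q)\le t_0$ and hence $t_1-T_{\max}(Q)\ge\lambda\ell(Q(\S))^2-\diam(Q)^2$.
\item The top $J$ generations of each tree are discarded into $\B$; they pack because they lie within $J$ generations of tops that pack. The trees are restarted at the descendants $Q_j$ with $\ell(Q_j)=2^{-J}\ell(Q(\S))$.
\item This makes $\X_1$ far in the future of $Q_j$ and at distance $\gtrsim\varepsilon'\ell(Q(\S))\ge 100\diam(Q_j)$ from $\Sigma$. Property (b) follows from the tree's lower density bound together with $\sigma(Q_j)\approx_J\sigma(Q(\S))$.
\end{itemize}

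Two smaller remarks on the packing step. First, the ``interaction'' problem you flag as hardest does not arise. Rademacher orthogonality gives the pointwise identity $\int_0^1\bigl|\sum_Q r_{\mathrm{idx}(Q)}(t)\nabla u_Q\bigr|^2dt=\sum_Q|\nabla u_Q|^2$, so you only need $\|f_t\|_\infty\lesssim 1$ and the localization \eqref{eq:loc_V_Q}. Second, what does matter is that the sets $Q\setminus\mathcal{L}(Q)$ be disjoint. This holds along $\LD$-subtrees but fails across an $\HD$ step, since then $E_{R'}\subset E_R$. That is why the paper packs each $\LD_\all(R)$ by $\sigma(R)$ via p-CME and lets the $\HD$ layers decay geometrically with ratio $C'/N\le 1/2$, rather than sorting tops by the size of their low-density portion as you propose.
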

\begin{proof}
	Fix $\tau, \varepsilon' > 0$ from Proposition~\ref{prop:GMT}, and we will allow implicit constants to depend on them. We divide the proof into a series of steps.
	
	\textbf{Step 1: creation of a single stopping-time regime, with 2 poles.} Let us first explain how to run a single stopping-time argument, which we will later iterate. For that purpose, fix $Q_0 \in \D$ with $\ell(Q_0) \leq 1$. We consider a pole $\X_0 \in \C_{c_{\mathrm{B}}r_{Q_0}}(\mbf{x}_{Q_0})$ with $\delta(\X_0) \approx \varepsilon' \ell(Q_0)$; indeed, one may simply choose the corkscrew point associated to a cube $Q_0' \in \D$ containing the center of $Q_0$, namely $\mbf{x}_{Q_0} \in Q_0'$, satisfying $\ell(Q_0') \approx \varepsilon' \ell(Q_0)$ (note that $\varepsilon' \ll c_{\mathrm{B}}$). Then, Bourgain's estimate (Lemma~\ref{lem:bourgain}) yields $\omega^{\X_0}(Q_0) \gtrsim 1$. We also fix another pole a little forward in time:
	\begin{equation*}
		\X_1 := \X_0 + (0, \lambda \ell(Q_0)^2), 
	\end{equation*}
	where $\lambda = \lambda(\varepsilon') > 0$ is a small fixed number, to be determined later. The translation happens only in the time direction.
	
	Now fix a large constant $N = N(\tau, \varepsilon') \gg 1$. Subdivide $Q_0$ dyadically, and stop at any cube $Q$ whenever it holds
	\begin{equation*}
		\frac{\omega^{\X_0}(Q)}{\sigma(Q)} < \frac{1}{N} \frac{\omega^{\X_0}(Q_0)}{\sigma(Q_0)}, 
		\qquad \text{or} \qquad 
		\left( \frac{1}{\sigma(Q)} \iint_Q \left( \mathcal{M}_\sigma  \omega^{\X_1}  \right)^{1/2} \, d\sigma \right)^2 > N^2 \, \frac{\omega^{\X_1}(Q_0)}{\sigma(Q_0)}.
	\end{equation*}
	Cubes satisfying the first condition will be called $\mathrm{LD}(Q_0)$ (``low density''), and the ones satisfying the second but not the first one will be called $\mathrm{HD}(Q_0)$ (``high density''). Both families are comprised of disjoint cubes, because of maximality when choosing them in a stopping time.
	
	It is elementary to check that $\mathrm{HD}$ cubes pack: 
	\begin{multline} \label{eq:packing_HD}
		\sum_{Q \in \mathrm{HD}(Q_0)} \sigma(Q) 
		< 
		\frac{1}{N} \sqrt{\frac{\sigma(Q_0)}{\omega^{\X_1}(Q_0)}} \sum_{Q \in \mathrm{HD}(Q_0)} \iint_Q (\mathcal{M}_\sigma \omega^{\X_1})^{1/2} \, d\sigma
		\\ 
		\leq
		\frac{1}{N} \sqrt{\frac{\sigma(Q_0)}{\omega^{\X_1}(Q_0)}} \iint_{Q_0} (\mathcal{M}_\sigma \omega^{\X_1})^{1/2} \, d\sigma
		\lesssim 
		\frac{1}{N} \sqrt{\frac{\sigma(Q_0)}{\omega^{\X_1}(Q_0)}} \sqrt{\sigma(Q_0)\omega^{\X_1}(Q_0)}
		= 
		\frac{1}{N} \sigma(Q_0),
	\end{multline}
	where we have used the weak-(1, 1) boundedness of the maximal function $\mathcal{M}_\sigma$, which holds since the underlying measure $\sigma$ is doubling because of the p-ADR assumption; see the Appendix for more details on this computation.
	
	On the other hand, $\mathrm{LD}$ cubes naturally satisfy a packing condition with respect to the measure $\omega^{\X_0}$. Indeed, denoting 
	\begin{equation*}
		\mathcal{L}(Q_0)
		:= 
		\bigcup_{Q\in \LD(Q_0)} \!\!\! Q 
		\;\; \subset \pom,
	\end{equation*}
	which is the subset of the boundary covered by these $\LD$ cubes, it holds
	\begin{equation*}
		\omega^{\X_0}(\mathcal{L}(Q_0))
		=
		\sum_{Q \in \mathrm{LD}(Q_0)} \omega^{\X_0}(Q) 
		< 
		\frac{1}{N} \frac{\omega^{\X_0}(Q_0)}{\sigma(Q_0)} \sum_{Q \in \mathrm{LD}(Q_0)} \sigma(Q) 
		\leq 
		\frac{1}{N} \, \omega^{\X_0}(Q_0).
	\end{equation*}
	Concretely, this implies that $\omega^{\X_0}(Q_0 \setminus \mathcal{L}(Q_0)) \geq (1-\varepsilon') \, \omega^{\X_0}(Q_0)$ if $N$ is chosen large enough. Recalling our choice of $\X_0$, we can apply Proposition~\ref{prop:GMT} to deduce the existence of a function $0 \leq f_{Q_0} \lesssim \mathbf{1}_{Q_0 \setminus \mathcal{L}(Q_0)}$ such that $u_{Q_0}(\X) := \iint_\Sigma f_{Q_0} \, d\omega^\X$, $\X \in \Omega$, satisfies 
	\begin{equation} \label{eq:GMT_lemma_applied}
		\iiint_{\mbf{V}_{Q_0}} \abs{\nabla u_{Q_0}}^2 \delta \gtrsim_{\tau, \varepsilon'} \sigma(Q_0).
	\end{equation}


	\textbf{Step 2: iteration to create a full corona decomposition, using p-CME.}
	
	Now fix $Q^0 \in \D$ with $\ell(Q^0) \leq 1$. We will iterate the construction from Step 1 below $Q^0$. Let us start by defining $\Stop_0 := \{Q^0\}$, and inductively
	\begin{equation*}
		\Stop_{k+1} 
		:=
		\bigsqcup_{R \in \Stop_k} \big( \!\HD(R) \cup \LD(R) \big), 
		\qquad k \geq 0,
	\end{equation*}
	where $\HD$ and $\LD$ are defined by the procedure from Step 1 (using $Q_0 = R$).
	We also denote 
	\begin{equation*}
		\Stop := \bigsqcup_{k \geq 0} \Stop_k, 
		\quad \text{and} \quad 
		\Tree(R) := \D(R) \setminus \big(\!\HD(R) \cup \LD(R)\big)
		\quad \text{for $R \in \Stop$},
	\end{equation*}
	where $\D(Q) := \{ Q' \in \D : Q' \subset Q\}$ are the cubes below $Q$.
	This creates a structure of subcoherent trees, rooted at $\HD$ and $\LD$ cubes, that covers all the cubes below $Q^0$. We need to show that the family $\Stop$ satisfies a Carleson packing condition in order for this construction to constitute a meaningful corona decomposition. For that, we follow the original idea of \cite[Section 3]{GMT} (see also \cite[Section 3]{AGMT} or \cite[Section 3.3]{CHM}), relying strongly on Proposition~\ref{prop:GMT} through \eqref{eq:GMT_lemma_applied}. The argument follows closely these references (although we will modify the exposition), and we refer the reader to these papers for further information.
	
	\textbf{Substep 2.1: packing $\LD$ cubes.} The key idea to pack the $\Stop$ cubes is to arrange them into subtrees of $\LD$ cubes rooted at $\HD$ cubes. Indeed, fix any $R \in \Stop$ which was a $\HD$ cube for the previous $\Stop$ generation, i.e., $R \in \Stop_{k+1}$ is contained in some $\widehat{R} \in \Stop_k$ and $R \in \HD(\widehat{R})$. Consider all the $\LD$ cubes that lie below $R$ before encountering any $\HD$ cube:
	\begin{equation*}
		\LD_0(R) := \{R\}, 
		\quad 
		\LD_{k+1}(R) := \bigsqcup_{Q \in \LD_k(R)} \LD(Q) 
		\quad \text{for $k \geq 0$,}
		\quad 
		\LD_{\all}(R) := \bigsqcup_{k \geq 0} \LD_k(R),
	\end{equation*}
	where we are again using the construction of $\LD$ cubes below any given cube from Step 1. As argued in \eqref{eq:GMT_lemma_applied}, it holds 
	\begin{equation} \label{eq:LD_all_sigma}
		\iiint_{\mbf{V}_{Q}} \abs{\nabla u_{Q}}^2 \delta \gtrsim_{\tau, \varepsilon'} \sigma(Q), 
		\qquad 
		\forall \, Q \in \LD_{\all},
	\end{equation}
	where $u_{Q}(\X) := \iint_\Sigma f_{Q} \, d\omega^\X$ for $\X \in \Omega$, and $0 \leq f_{Q} \lesssim \mathbf{1}_{Q \setminus \mathcal{L}(Q)}$, where we recall that we defined $\mathcal{L}(Q) = \bigcup_{Q' \in \LD(Q)} Q'$.
	
	Now let $K \geq 1$, and consider $\{r_j(t)\}_{j \in \N}$ the Rademacher system in $[0, 1)$. For $t \in [0, 1)$, let
	\begin{equation*}
		f_t(\mbf{y})
		:=
		\sum_{k = 1}^K \sum_{Q \in \LD_k(R)} r_{\mathrm{idx}(Q)}(t) \, f_Q(\mbf{y}), 
		\qquad \mbf{y} \in R, 
	\end{equation*}
	where $\mathrm{idx} : \LD_\all(R) \to \N$ is an enumeration of the cubes in $\LD_\all(R)$, and the solutions
	\begin{equation*}
		u_t(\X) := \iint_\Sigma f_t(\mbf{y}) \, d\omega^\X(\mbf{y})
		=
		\sum_{k = 1}^K \sum_{Q \in \LD_k(R)} r_{\mathrm{idx}(Q)}(t) \, u_Q(\X), 
		\qquad \X \in \Omega.
	\end{equation*}
	It is elementary to check that $\{Q \setminus \mathcal{L}(Q)\}_{Q \in \LD_\all(R)}$ are pairwise disjoint\footnote{
		Indeed, assume that $Q, Q' \in \LD_\all(R)$ are distinct cubes satisfying $(Q \setminus \mathcal{L}(Q)) \cap (Q' \setminus \mathcal{L}(Q')) \neq \emptyset$. Concretely, $Q \cap Q' \neq \emptyset$, so without loss of generality $Q' \subsetneq Q$. Then pick $Q'' \in \LD_\all(R)$ maximal such that $Q' \subset Q'' \subsetneq Q$, such that in fact $Q'' \in \LD(Q)$. This means that $Q' \subset Q'' \subset \mathcal{L}(Q)$, which implies that $Q' \cap (Q \setminus \mathcal{L}(Q)) = \emptyset$, which is a contradiction.
	}, which implies that $\norm{f_t}_\infty \lesssim 1$ for every $t \in [0, 1)$, where the constant is independent of $t$ and $K$ (this just comes from the individual bound $0 \leq f_Q \lesssim \mathbf{1}_{Q\setminus \mathcal{L}(Q)}$). Then the maximum principle yields $\norm{u_t}_\infty \lesssim 1$ independently of $t$ and $K$, which implies that we can meaningfully use the p-CME assumption with $u_t$, along with the orthogonality of the Rademacher family and the crucial estimate \eqref{eq:LD_all_sigma}, to obtain
	\begin{align*}
		\sum_{k=1}^K \sum_{Q \in \LD_k(R)} \sigma(Q)
		& \lesssim_{\tau, \varepsilon'}
		\sum_{k=1}^K \sum_{Q \in \LD_k(R)} \iiint_{\mbf{V}_{Q}} \abs{\nabla u_{Q}(\mbf{X})}^2 \delta(\mbf{X}) 
		\\ & \leq 
		\iiint_{\bigcup\limits_{k=1}^K \bigcup\limits_{Q \in \LD_k(R)} \!\!\!\!\! \mbf{V}_Q} \;\;	\sum_{k=1}^K \sum_{Q \in \LD_k(R)} \abs{\nabla u_{Q}(\mbf{X})}^2 \delta(\mbf{X})
		\\ & =
		\iiint_{\bigcup\limits_{k=1}^K \bigcup\limits_{Q \in \LD_k(R)} \!\!\!\!\! \mbf{V}_Q} \;\;	\left( \int_0^1 \abs{\sum_{k=1}^K \sum_{Q \in \LD_k(R)} r_{\mathrm{idx}(Q)}(t) \nabla u_{Q}(\X)}^2 dt \right)\delta (\X) \, d\X
		\\ & =
		\iiint_{\bigcup\limits_{k=1}^K \bigcup\limits_{Q \in \LD_k(R)} \!\!\!\!\! \mbf{V}_Q} \;\;	\left( \int_0^1 \abs{\nabla u_t(\X)}^2 dt \right)\delta (\X) \, d\X
		\\ & \leq 
		\int_0^1 \left( \sum_{k=1}^K \sum_{Q \in \LD_k(R)} \iiint_{\mbf{V}_Q} \abs{\nabla u_t(\X)}^2 \delta (\X) \, d\X \right) \, dt
		\\ & \lesssim_{\tau, \varepsilon'} 
		\int_0^1 \iiint_{\Omega \cap \C_{2C_{\mathrm{cub}}\diam(R)}(\mbf{x}_R)} \abs{\nabla u_t(\X)}^2 \delta (\X) \, d\X \, dt
		\\ & \lesssim 
		\int_0^1 \norm{u_t}_\infty^2 \diam(R)^{n+1} \, dt
		\\ & \lesssim 
		\sigma(R),
	\end{align*}
	where in the third-to-last line we have also used the bounded overlap of the $\mbf{V}_Q$ regions because of their localization property from \eqref{eq:loc_V_Q}\footnote{
		By bounded overlap we mean that, given a fixed $Q \in \LD_\all(R)$, there is only a finite amount (depending on $\tau$ and $\varepsilon'$) of $Q' \in \LD_\all(R)$ such that $\mbf{V}_Q \cap \mbf{V}_{Q'} \neq \emptyset$. Indeed, if that is true, then \eqref{eq:loc_V_Q} and \eqref{eq:hyp_epsilon_1} imply that $\big(\C_{\ell(Q)}(\mbf{x}_Q) \cap \{ \delta \geq c\tau\varepsilon'\ell(Q) \}\big) \cap \big(\C_{\ell(Q')}(\mbf{x}_{Q'}) \cap \{ \delta \geq c\tau\varepsilon'\ell(Q') \}\big) \neq \emptyset$. This clearly implies that $\ell(Q') \approx_{\tau, \varepsilon'} \ell(Q)$ and that $\dist(Q, Q') \lesssim_{\tau, \varepsilon'} \ell(Q)$. The p-ADR property implies that, given $Q$, there is only finitely many (depending on $\tau, \varepsilon'$) cubes $Q'$ satisfying these properties.
	} and also the fact that for any $Q \in \LD_\all(R)$ it holds $\mbf{V}_Q \subset \C_{\ell(Q)}(\mbf{x}_Q) \subset \C_{\ell(R)}(\mbf{x}_Q) \subset \C_{C_{\mathrm{cub}}\diam(R)}(\mbf{x}_Q) \subset \C_{2C_{\mathrm{cub}}\diam(R)}(\mbf{x}_R)$ because of \eqref{eq:loc_V_Q} and $\mbf{x}_Q \in Q \subset R$, and the last estimate is due to the p-ADR. Since constants are uniform on $K$,
	\begin{equation} \label{eq:packing_LD_all}
		\sum_{Q\in \LD_\all(R)} \sigma(Q) 
		\lesssim_{\tau, \varepsilon'} 
		\sigma(R).
	\end{equation}
	
	\begin{figure}[h!]
		\centering 
		\resizebox{0.7\textwidth}{!}{
		
		}
		\caption{A sketch of the decomposition of $\D$ carried out during the argument of Step 2. We only draw the cubes below $Q^0$ which belong to the family $\Stop$ (so there are in fact many omitted cubes in the picture: all in $\D \setminus \Stop$): lines represent containment (size decreases as we go down), and blue lines mean that the small cube is a $\LD$ cube of its ancestor, as defined in Step 1 (resp. black lines represent $\HD$). The idea is to create subtrees fully comprised of $\LD$ cubes (the light blue ones) below each $\Stop$ cube (sometimes these subtrees are trivial because there are no $\LD$ cubes directly below the given $\Stop$ cube, so we do not represent them in the picture), which we call $\LD_\all(R)$ for a given $R \in \Stop$, and apply the argument in Substep 2.1 to estimate their size in terms of that of the root $R$. Then, we are left with $\HD$ cubes, which we stratify in layers $\HD_k$ (the green ones; note that these layers may not be at all in correspondence with the dyadic generations in $\D$, but are still stratified as we go down the grid) in order to apply the geometric series argument from Substep 2.2 to pack them in terms of the size of $Q^0$.}
		\label{fig:corona}
	\end{figure} 
	
	\textbf{Substep 2.2: packing $\HD$ cubes.} Denote by $\HD_\all$ all the $\HD$ cubes below $Q^0$, namely 
	\begin{equation*}
		\HD_\all
		:=
		\{Q^0\} \cup
 		\bigsqcup_{k \geq 0} \bigsqcup_{R \in \Stop_k} \HD(R).
	\end{equation*}
	It is easy to check that these constructions yield (see Figure~\ref{fig:corona} for the intuition behind this)
	\begin{equation} \label{eq:stop_decomp}
		\Stop 
		=
		\bigsqcup_{R \in \HD_\all(Q^0)} \LD_\all(R),
	\end{equation}
	that is, all the $\Stop$ cubes are stratified into cubes rooted at some $\HD_\all$ cube, which contain only $\LD$ cubes. Let us stratify the $\HD_\all$ cubes by defining $\HD_0 := \{Q^0\}$ and then
	\begin{equation*}
		\HD_{k+1} := \left\{ R \in \HD_\all : 
		\begin{array}{c}
			\text{there are exactly $k$ cubes $R' \in \HD_\all$}
			\\ \text{satisfying $R \subsetneq R' \subset Q^0$} 
		\end{array}
		\right\}, 
		\quad \text{for $k \geq 0$.}
	\end{equation*}
	Then, it is easy to note that, in fact, it holds 
	\begin{equation*}
		\HD_{k+1} 
		=
		\bigsqcup_{R' \in \HD_k}
		\bigsqcup_{Q \in \LD_\all(R')}
		\HD(Q),
	\end{equation*}
	that is, each cube $R$ in the new $\HD_{k+1}$ generation is in the $\HD$ set (defined in Step 1) of some $\LD_\all$ cube from the previous generation.\footnote{Recall that $\LD_\all(R)$ contains the cube $R$, too, which covers the case that there are no intermediate $\LD$ cubes between $R$ and $R'$.} This implies, along with the fact that the layers $\HD_k$ are comprised of disjoint cubes, \eqref{eq:packing_HD} and \eqref{eq:packing_LD_all}, that
	\begin{multline*}
		\sum_{R \in \HD_{k+1}} \sigma(R)
		=
		\sum_{R' \in \HD_k} \sum_{Q \in \LD_\all(R')} \sum_{R \in \HD(Q)} \sigma(R)
		\leq 
		\frac{C}{N} \sum_{R' \in \HD_k} \sum_{Q \in \LD_\all(R')} \sigma(Q)
		\\ \leq 
		\frac{C'}{N} \sum_{R' \in \HD_k} \sigma(R')
		\leq 
		\frac12 \sum_{R' \in \HD_k} \sigma(R'),
	\end{multline*}
	where we have chosen $N$ large enough (depending on $\tau, \varepsilon'$).
	This allows us to sum a geometric series to obtain, using also \eqref{eq:stop_decomp} and then \eqref{eq:packing_LD_all},
	\begin{equation*}
		\sum_{Q \in \Stop} \sigma(Q)
		=
		\sum_{k \geq 0} \sum_{R \in \HD_k} \sum_{Q \in \LD_\all(R)} \sigma(Q)
		\lesssim
		\sum_{k \geq 0} \sum_{R \in \HD_k} \sigma(R)
		\leq 
		2 \sigma(Q^0).
	\end{equation*} 
	
	\textbf{Substep 2.3: the whole corona.}
	By covering the whole $\Sigma$ by cubes $Q^0$ of the same generation with sidelength smaller than 1, and the arguments from the previous substeps, we have effectively found\footnote{
		This only produces a decomposition of the cubes in $\D$ whose sidelength is smaller than $\ell(Q^0)$, which are the relevant cubes for our problem because they take care of the small scales, so of the fine behavior near the boundary. One can easily complete the corona decomposition (see e.g. \cite[Proposition 2.39]{CHM}).
	} a decomposition $\D = \bigsqcup \S$, where each $\S$ is a semi-coherent stopping-time regime (the so-called $\Tree$ previously), such that the maximal cubes $Q(\S)$ of these subregimes (which correspond to $\Stop$ cubes with the notation above) satisfy 
	\begin{equation*}
		\sum_{Q(\S) \subset \widetilde{Q}} \sigma(Q(\S)) 
		\lesssim 
		\sigma(\widetilde{Q}), 
		\qquad 
		\forall \, \widetilde{Q} \in \D
	\end{equation*}
	because of the estimates just obtained in Substep 2.2. Moreover, by the construction from Step 1 (see the definition of the $\Tree$ at the beginning of Step 2, too), it holds, for every $Q \in \S$,
	\begin{equation*}
		\frac{\omega^{\X_0(\S)}(Q)}{\sigma(Q)} \geq \frac{1}{N} \frac{\omega^{\X_0(\S)}(Q(\S))}{\sigma(Q(\S))}, 
		\;\; \text{and} \;\; 
		\left( \frac{1}{\sigma(Q)} \iint_Q \left( \mathcal{M}_\sigma \omega^{\X_1(\S)}  \right)^{1/2} \, d\sigma \right)^2 \leq N^2 \, \frac{\omega^{\X_1(\S)}(Q(\S))}{\sigma(Q(\S))},
	\end{equation*} 
	where $\X_0(\S), \X_1(\S)$ are the poles associated to $Q(\S)$ by the mechanism of Step 1.

	  
	\textbf{Step 3: removing the pole in the past.} The corona decomposition obtained in Substep 2.3 is not yet fully satisfactory because we have two poles at each regime: $\X_0(\S)$ and $\X_1(\S)$. In order for the corona decomposition to be meaningful and for the constructions in the forthcoming arguments of the paper, we prefer to use $\X_1(\S)$ because it is more at the future. So let us get rid of $\X_0(\S)$. 
	
	Fix $\S$. First, by the interior Harnack inequality (Lemma~\ref{lem:harnack}, use that $\lambda$ is small), it holds 
	\begin{equation} \label{eq:harnack_two_poles}
		\omega^{\X_0(\S)}(Q) \leq C_{\mathrm{H}} \omega^{\X_1(\S)}(Q),
	\end{equation}
	for any $Q \in \S$. Moreover, by our choice of $\X_0$ in Step 1 using Lemma~\ref{lem:bourgain}, it also holds 
	\begin{equation} \label{eq:bourgain_two_poles}
		\omega^{\X_0(\S)}(Q(\S))
		\geq 
		c_{\mathrm{B}} 
		\geq 
		c_{\mathrm{B}} \omega^{\X_1(\S)}(Q(\S)).
	\end{equation}
	Putting these together back in the conclusion of Step 2, we have
	\begin{multline*}
		\frac{c_{\mathrm{B}}}{C_{\mathrm{H}} N} \frac{\omega^{\X_1(\S)}(Q(\S))}{\sigma(Q(\S))}
		\leq 
		\frac{1}{C_{\mathrm{H}} N} \frac{\omega^{\X_0(\S)}(Q(\S))}{\sigma(Q(\S))}
		\leq 
		\frac{1}{C_{\mathrm{H}}} \frac{\omega^{\X_0(\S)}(Q)}{\sigma(Q)}
		\\ \leq 
		\frac{\omega^{\X_1(\S)}(Q)}{\sigma(Q)}
		\leq 
		\left( \bariint_Q \left( \mathcal{M}_\sigma \omega^{\X_1(\S)} \right)^{1/2} \, d\sigma \right)^2 \leq 
		N^2 \, \frac{\omega^{\X_1(\S)}(Q(\S))}{\sigma(Q(\S))}, 
		\qquad \forall \, Q \in \S.	
	\end{multline*}
	 
	On top of the above, for any $Q \in \S$, it holds $\omega^{\X_0(\S)}(Q) > 0$ (see the last display in Step 2). Denoting $t_0(\S)$ and $t_1(\S)$ the time coordinates of $\X_0(\S)$ and $\X_1(\S)$, this implies that $T_{\min}(Q) \leq t_0(\S)$, which yields (recalling also Lemma~\ref{lem:cubes})
	\begin{equation*}
		t_1(\S) - T_{\max}(Q) 
		\geq 
		(t_0(\S) + \lambda \ell(Q(\S))^2) - (T_{\min}(Q) + \diam(Q)^2) 
		\geq 
		\lambda \ell(Q(\S))^2 - \diam(Q)^2.
	\end{equation*}
	This of course resembles (a) in the statement, but (a) is not yet true because $Q$ could be of a comparable size of $Q(\S)$. We need to fix that.
	
	
	\textbf{Step 4: refinement of the corona regimes, so that poles are in the future.} The idea to fix the problem just exposed is simple: within each $\S$, we will \textit{discard} the larger cubes. Concretely, fix $J = J(\varepsilon', \lambda) \gg 1$. Given $\S$, we denote\footnote{$\D_J(Q(\S)) := \{ Q \in \D : Q \subset Q(\S) \text{ and } \ell(Q) = 2^{-J} \ell(Q(\S)) \}$ is the $J$-th generation of descendants of $Q(\S)$.} $\F(\S) := \D_J(Q(\S)) \cap \S$: these will be the new maximal cubes. Every cube in $\S \cap \D_{Q(\S), \F(\S)}$, i.e., contained between $Q(\S)$ and any cube of $\F(\S)$, is put into $\B$: these cubes pack because they are comparable in size to $Q(\S)$ (constants depending on $J$), and the family $\{Q(\S)\}_{\S}$ already packed (Step 2). 
	
	Associated to each $Q_j \in \F(\S)$, we create a new semicoherent regime $\S_j := \S \cap \D(Q_j)$, where we recall that $\D(Q_j) := \{Q' \in \D : Q' \subset Q_j\}$ are the cubes inside $Q_j$. With these new regimes, it is clear that (1) and (2) in the statement hold.
	
	Given any such $\S_j$ (associated to $\S$), we define its pole by $\X_1(\S_j) := \X_1(\S)$, then it still holds (Step 3), simply because $\S_j \subset \S$, that for every $Q \in \S_j$,
	\begin{equation*}
		\frac{c_{\mathrm{B}}}{C_{\mathrm{H}} N} \frac{\omega^{\X_1(\S_j)}(Q(\S))}{\sigma(Q(\S))}
		\leq 
		\frac{\omega^{\X_1(\S_j)}(Q)}{\sigma(Q)}
		\leq 
		\left( \frac{1}{\sigma(Q)} \iint_Q \left( \mathcal{M}_\sigma \omega^{\X_1(\S_j)} \right)^{1/2} \, d\sigma \right)^2 
		\leq 
		N^2 \, \frac{\omega^{\X_1(\S_j)}(Q(\S))}{\sigma(Q(\S))}.
	\end{equation*}

	Note now that $\sigma(Q_j) \approx_J \sigma(Q(\S))$ by the p-ADR condition, and since $Q_j \in \S$ it also holds
	\begin{multline*}
		\omega^{\X_1(\S_j)}(Q(\S_j)) 
		= 
		\omega^{\X_1(\S)}(Q_j) 
		\geq 
		\frac{c_{\mathrm{B}}}{C_{\mathrm{H}} N} \, \frac{\omega^{\X_1(\S)}(Q(\S))}{\sigma(Q(\S))} \sigma(Q_j)
		\geq 
		\frac{c_{\mathrm{B}}}{C(J) C_{\mathrm{H}} N} \, \omega^{\X_1(\S)}(Q(\S))
		\\ \geq 
		\frac{c_{\mathrm{B}}}{C(J) C_{\mathrm{H}}^2 N} \, \omega^{\X_0(\S)}(Q(\S))
		\geq 
		\frac{c_{\mathrm{B}}^2}{C(J) C_{\mathrm{H}}^2 N},
	\end{multline*}
	where we have used \eqref{eq:harnack_two_poles} and \eqref{eq:bourgain_two_poles}. This establishes (b). Moreover, this implies that 
	\begin{equation*}
		\omega^{\X_1(\S_j)}(Q(\S_j)) \approx 1 \approx \omega^{\X_1(\S)}(Q(\S)),
	\end{equation*} 
	which back in the display above implies 
	\begin{equation*}
		\frac{\omega^{\X_1(\S_j)}(Q(\S_j))}{\sigma(Q(\S_j))}
		\lesssim 
		\frac{\omega^{\X_1(\S_j)}(Q)}{\sigma(Q)}
		\leq 
		\left( \iint_Q \mathcal{M}_\sigma \left( \frac{\omega^{\X_1(\S_j)}}{\sigma(Q)} \right)^{1/2} \, d\sigma \right)^2
		\lesssim 
		\frac{\omega^{\X_1(\S_j)}(Q(\S_j))}{\sigma(Q(\S_j))}, 
	\end{equation*}
	for any $Q \in \S_j$, which is (c).
	
	Lastly, let us check (a). As in Step 3, if $Q \in \S_j$,
	\begin{multline*}
		t_1(\S_j) - T_{\max}(Q)
		\geq 
		\lambda \ell(Q(\S))^2 - \diam(Q)^2
		\\ \geq 
		\lambda (C_{\mathrm{cub}}^{-1} 2^J \diam(Q(\S_j)))^2 - \diam(Q(\S_j))^2
		\geq 
		(500 \diam(Q(\S_j)))^2, 
	\end{multline*}
	simply by choosing any $\lambda > 0$ small (recall the use of Harnack's inequality in Step 3), and then $J$ large enough. Moreover, 
	\begin{align*}
		\delta(\X_1(\S)) 
		& \geq 
		\delta(\X_0(\S)) - \norm{\X_1(\S) - \X_0(\S)}
		\geq 
		c \varepsilon' \ell(Q(\S)) - \sqrt{\lambda} \ell(Q(\S))
		\\ & \geq 
		c \varepsilon' 2^J \diam(Q(\S_j)) - C_{\mathrm{cub}} 2^J \sqrt{\lambda} \diam(Q(\S_j))
		\\ & \geq 
		\frac{c}{2} \varepsilon' 2^J \diam(Q(\S_j))
		\geq 
		100 \diam(Q(\S_j)),
	\end{align*}
	again by simply choosing $\lambda$ small enough (depending on $\varepsilon'$), and later $J$ large enough (depending on $\lambda$ and $\varepsilon'$). This establishes (a), hence finishing the proof.
\end{proof}



\section{Construction of approximating graphical subdomains} \label{sec:WHSA}

By Theorem~\ref{th:corona}, the p-CME assumption yields a corona decomposition for caloric measure. Thus, to complete the proof of Theorem~\ref{th:main}, it remains to show that such a corona decomposition forces $\Sigma$ to be parabolic uniformly rectifiable.

As a first step, we construct interior approximating parabolic Lipschitz graphs at many scales (more precisely, in the sense of the corona decomposition). The one-sided nature of this approximation, namely that the graphs lie strictly inside $\Omega$ after truncation to the relevant scale (see \eqref{eq:containment}), is essential for the arguments of Section~\ref{sec:pushing_CME}. This construction is the parabolic analogue of the elliptic weak half-space approximation (WHSA). Unlike in the elliptic setting, however, the existence of such approximating graphs does not by itself imply parabolic uniform rectifiability, see \cite[Proposition 1.17]{HLMN} and \cite[Observation 4.19]{BHHLN_22a}. Rather, the graphs must also satisfy the regularity condition from Section~\ref{subsec:UR}, which will be established in the next section.

The construction in this section follows closely the approach of Martell, Nystr\"om and the first and third authors \cite{BHMN}. Although our corona decomposition is formulated slightly differently, the arguments of \cite[Sections~6--9 and 11]{BHMN} adapt with only minor modifications\footnote{
	Indeed, the difference of our corona decomposition with respect to that in \cite{BHMN} is that our maximal function term is measured in $L^{1/2}$ instead of $L^1$. Nevertheless, one can check that the only place in which this affects the arguments of \cite{BHMN} is precisely in (6.33), and in that computation, one can easily introduce $L^{1/2}$ averages instead of $L^1$ ones.
	\\ 
	Moreover, we are changing the background assumption of \cite{BHMN}, namely TSADR (time-symmetric ADR), by TSCDC and p-ADR. The impact of this is that the PDE estimates from \cite[Section 5]{BHMN} have to be verified under this new set of hypotheses, but we already did that in Lemmas~\ref{lem:bourgain} and \ref{lem:holder}, and Remark~\ref{rem:estimates_BHMN}. Moreover, in \cite[Section 7.4]{BHMN}, the authors invoke \cite[Lemma 2.6]{BHMN}, but we already took care of it in Lemma~\ref{lem:2.6BHMN}. The rest of \cite[Sections 6-9]{BHMN} only needs the non-time-directed version of p-ADR.
} to give the main result of this section, which is the following coronization by interior Lip(1,1/2) subdomains. In \cite{BHMN}, this does not appear as an isolated result, but rather as a long construction, so we include enough citations for the reader to follow the argument.

\begin{proposition}[Approximation by very flat Lipschitz subdomains] \label{prop:WHSA}
	Let $\Omega \subset \ree$ be an open set satisfying the TSCDC (see Definition~\ref{def:TBCDC}) so that $\Sigma := \pom$ is p-ADR (see Definition~\ref{def:ADR}). Assume also that caloric measure admits a corona decomposition (see Definition~\ref{def:corona}). Then, there exist $K_0, C_{\mathrm{LA}} > 0$ ($\mathrm{LA} =$ ``Lipschitz approximation'') large enough,
	\begin{equation} \label{eq:epsilon_K_0}
		0 < \varepsilon \leq K_0^{-100}
	\end{equation} 
	small enough (for this non-optimal, yet sufficient, bound, see \cite[(6.34)]{BHMN}), all depending only on $n, C_{\mathrm{ADR}}, C_{\mathrm{CDC}}, C_{\mathrm{cor}}$, such that the following holds.	
	
	Fix a stopping time regime $\S \in \mathcal{S}$ provided by the corona decomposition for the caloric measure. 
	Then $\S$ can be divided into a collection of disjoint subregimes $\S^* \in \mathcal{S}^*(\S)$, the so-called ``graph trees'' ($\S^*$ denotes a single graph tree, and $\mathcal{S}^*(\S)$ is the set of all graph trees contained in the tree $\S$), up to some new bad cubes $\B_\S$, that is, $\S = \left(\bigsqcup_{\mathcal{S}^*(\S)} \S^*\right) \sqcup \B_\S$ (see Lemma 6.45 and Proposition 8.2 in \cite{BHMN}), so that it holds:
	\begin{itemize}
		
		\item For any $\S^* \in \mathcal{S}^*(\S)$, it holds 
		\begin{equation} \label{eq:tops_small}
			\ell(Q(\S^*)) \leq \varepsilon^{10} \ell(Q(\S)).
		\end{equation}
		See pp. 60 and 61 in \cite{BHMN}.
		
		\item If we consider the following normalized version of the adjoint Green function in $\Omega$ (see Definition~\ref{def:green}, and Theorem~\ref{th:corona} for the choice of the pole $\mbf{Y}_\S = (Y_\S, s_\S)$)
		\begin{equation} \label{eq:def_hatu}
			\widehat{u}(Y, s)
			:=
			\frac{\sigma(Q(\S))}{\omega^{\mbf{Y}_\S}(Q(\S))} \widehat{G}_\Omega (Y_\S, s_\S; Y, s), 
			\qquad (Y, s) \in \Omega.
 		\end{equation}
		then, for each $Q \in \S^* \in \mathcal{S}^*(\S)$, there exists $(Y^*_Q, s^*_Q) \in \mbf{U}_Q^\good$ with
		\begin{equation} \label{eq:good_gradient}
			C_{\mathrm{LA}}^{-1} \leq \abs{\nabla \widehat{u}(Y^*_Q, s^*_Q)} \leq C_{\mathrm{LA}},
		\end{equation}
		where $\mbf{U}_Q^\good$ is some component of the Whitney region associated to $Q$ (see Subsection~\ref{subsec:whitney}). Moreover, 
		it also holds 
		\begin{equation} \label{eq:upper_bound_u_enlarged}
			\delta_\Omega(\mbf{X}) \lesssim_\varepsilon \widehat{u} (\mbf{X}), 
			\qquad 
			\forall \, \mbf{X} \in \widetilde{\mbf{U}}_Q^\good,
		\end{equation}
		where $\widetilde{\mbf{U}}_Q^\good$ is the component of the enlarged Whitney region associated to $Q$ constructed from $\mbf{U}_Q^\good$, which contains $(Y^*_Q, s^*_Q)$ (see Subsection~\ref{subsec:whitney}).
		See (6.28), (6.54) (also (8.6)) and Corollary 6.52 in \cite{BHMN}.
		
		\item For every fixed $\S^*$:  
		\begin{itemize}[label=$\circ$]
			\item Let us denote 
			\begin{equation*}
				\kappa := C_{\mathrm{LA}} K_0, 
				\qquad \text{and} \qquad 
				R_{\S^*} := \diam(Q(\S^*)).
			\end{equation*}
			See (9.11) and (9.22) in \cite{BHMN}.
			
			\item Up to rotation only in the spatial variables, there exists a function $\psi_{\S^*} : \Rn \to \R$, whose graph is
			\begin{equation*}
				\Gamma_{\S^*} 
				:= 
				\big\{ (\psi_{\S^*}(x, t), x, t) : \; (x, t) \in \R^{n-1} \times \R \big\}.
			\end{equation*}
			We denote the projection towards the hyperplane defining $\psi_{\S^*}$ by 
			\begin{equation*}
				\pi: \R^{n+1} \longrightarrow \Rn, \qquad \pi(x_0, x, t) = (x, t).
			\end{equation*}
			See pp. 66, 70 and 78 in \cite{BHMN}.
			
			\item  
			If we define the stopping-time distance
			\begin{equation*}
				d(X, t) := d_{\S^*}(X, t) :=
				\inf_{Q \in \S^*} \big[\!\dist((X, t), Q) + \diam(Q) \, \big],
				\quad 
				(X, t) \in \R^{n+1}, 
			\end{equation*} 
			then it holds $\Gamma_{\S^*} \equiv \Sigma$ over\footnote{
				Indeed, $\psi_{\S^*}$ is defined to agree with $\Sigma$ over $F$, and then extended outside. See \cite[(9.30)]{BHMN}.
			}
			\begin{equation*}
				F := F_{\S^*} := \{ (X, t) \in \Sigma : \; d(X, t) = 0 \}.
			\end{equation*}
			See (9.6), (9.8), Lemma 9.12 and (9.30) in \cite{BHMN}.
			
			\item Moreover, we also define a projected stopping-time distance
			\begin{equation*}
				D(x, t)
				:=
				\inf_{x_0 \in \R} d(x_0, x, t)
				=
				\inf_{Q \in \S^*} \big[\! \dist((x, t), \pi(Q)) + \diam(Q) \, \big],
				\quad 
				(x, t) \in \Rn.
			\end{equation*}
			See \cite[(9.7)]{BHMN}.
		
			\item The set $\Rn \setminus \pi(F)$, where $\psi_{\S^*}$ does not agree with $\Sigma$, is partitioned in dyadic cubes\footnote{These are actually the usual dyadic cubes in $\R^n$, not the ones adapted to the geometry of $\Sigma$ from Lemma~\ref{lem:cubes}.} $\{I_i\}_{i \in \mathcal{I}}$, for which we have
			\begin{equation} \label{eq:diam_I_i}
				10 \diam(I_i) 
				\leq 
				D(x, t)
				\leq 
				60 \diam(I_i),
				\qquad 
				(x, t) \in 10I_i.
			\end{equation}
			We also denote 
			\begin{equation} \label{eq:def_Lambda}
				\Lambda 
				:=
				\big\{ i \in \mathcal{I} : I_i \cap C'_{2\kappa R_{\S^*}}(x_{Q(\S^*)}, t_{Q(\S^*)}) \neq \emptyset \big\}.
			\end{equation}
			See Section 9.2, Lemma 9.19 and (9.23) in \cite{BHMN}.
			
			\item The graph $\psi_{\S^*}$ is localized to the scale of $\S^*$, in the sense that 
			\begin{equation} \label{eq:psi_zero_far}
				\psi_{\S^*} \equiv 0 \qquad \text{ in $\Rn \setminus C'_{4\kappa R_{\S^*}}(x_{Q(\S^*)}, t_{Q(\S^*)})$}.
			\end{equation}
			See \cite[(9.31)]{BHMN}.
		
			\item The graph $\psi_{\S^*}$ is $\Lip(1,1/2)$, and actually very flat: 
			\begin{equation} \label{eq:psi_lipschitz}
				\norm{\psi_{\S^*}}_{\Lip(1,1/2)} \leq C_{\mathrm{LA}} \varepsilon^{1/2}.
			\end{equation}
			See \cite[Lemma 9.72]{BHMN}.
		
			\item The graph of $\psi_{\S^*}$ approximates the boundary $\Sigma$ at the scales of $\S^*$, namely 
			\begin{equation} \label{eq:Gamma_approximates} 
				\sup_{(X, t) \in 2Q} \dist((X, t), \Gamma_{\S^*}) \leq C_{\mathrm{LA}} K_0 \diam(Q), 
				\qquad 
				\forall \, Q \in \S^*.
			\end{equation}
			See \cite[Lemma 9.76]{BHMN}.
			
			\item Reciprocally, the approximation is very tight at the scales of the $I_i$ ($i \in \mathcal{I}$), namely 
			\begin{equation} \label{eq:approx_I_i}
				\delta_\Omega(\psi_{\S^*}(x, t), x, t) 
				\leq 
				\varepsilon^{1/2} \diam(I_i), 
				\qquad 
				\forall \, (x, t) \in 10I_i, \; i \in \Lambda.
			\end{equation}
			See \cite[Lemma 9.63]{BHMN}.
		
			\item Define the domain above the graph of $\psi_{\S^*}$ by 
			\begin{equation*}
				\Omega_{\S^*} := \big\{ (x_0, x, t) \in \R \times \R^{n-1} \times \R : x_0 > \psi_{\S^*}(x, t) \big\}.
			\end{equation*} 
			Moreover, let
			$(x_{Q(\S^*)}, t_{Q(\S^*)}) = \pi(\mbf{x}_{Q(\S^*)})$ be the projection (i.e. removing the first coordinate) of the center of $Q(\S^*)$. Define the following truncation of $\Omega_{\S^*}$:
			\begin{align} \label{def:subdomain_truncated}
				\Omega'_{\S^*} := \Big \{ (x_0, x, t) \in \R \times \R^{n-1} \times \R : \; & \;\; \psi_{\S^*}(x, t) < x_0 < \varepsilon^{-1/4} \ell(Q(\S^*)), 
				\\ & \text{ and } (x, t) \in C'_{\kappa R_{\S^*}}(x_{Q(\S^*)}, t_{Q(\S^*)}) \Big \}.
				\nonumber
			\end{align}
			Then it holds 
			\begin{equation} \label{eq:containment}
				\Omega'_{\S^*} \subset \bigcup_{Q \in \S^*} \widetilde{\mbf{U}}_Q^{\good} \subset \Omega.
			\end{equation}
			See (9.78), (9.79) and Lemma 9.81 in \cite{BHMN}.
		\end{itemize} 
		\item  Moreover, {\bf if the graphs $\Gamma_{\S^*}$ are regular} (with uniform constants) then the bad and maximal cubes of the graph trees pack, namely 
		\begin{equation*}
			\sum_{\substack{\S^* \in \mathcal{S}^*(\S) \\ Q(\S^*) \subset R}} \sigma(Q(\S^*))
			+ \sum_{\substack{Q \in \B_{\S} \\ Q \subset R}} \sigma(Q)
			\lesssim 
			\sigma(R), 
			\qquad
			\forall R \in \D.
		\end{equation*}
		See Section 11 in \cite{BHMN}.\footnote{
			As explained in \cite[Remark 8.5]{BHMN}, the packing condition in that paper is stated in Lemma 8.20, and proved in Section 11. The proof in \cite[Section 11]{BHMN} relies strongly on the regularity of the approximating graphs, which is shown in \cite[Section 10]{BHMN} (concretely, Proposition 10.4). Although our strategy to show regularity of the graphs will be different than that of \cite{BHMN}, because we need to use the p-CME (or the corona for $\omega$) assumption instead of the $A_\infty$ condition from \cite{BHMN} (see Theorem~\ref{th:push_cme}), it is easy to check that the arguments of \cite[Section 11]{BHMN} work well as a \textit{black box} to show the packing condition under the regularity of the graphs. Indeed, the $A_\infty$ assumption is never used directly in \cite[Section 11]{BHMN}, but only through the regularity of the approximating graphs, and this allows us to import the result to our setting.
		}
	\end{itemize}
\end{proposition}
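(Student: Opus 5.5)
The plan is to import the construction of \cite[Sections 6--9 and 11]{BHMN} essentially as a black box. The work is to check that each ingredient used there is available under our hypotheses: TSCDC plus non time-directed p-ADR, and the corona decomposition of Definition~\ref{def:corona}, in place of TSADR and the $L^1$-maximal-function corona of \cite{BHMN}. I would proceed in the order in which \cite{BHMN} builds the objects, flagging at each stage which prior estimate is invoked and why it survives.

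\textbf{Step 1: PDE estimates.} First I would verify the PDE toolbox of \cite[Section 5]{BHMN} for the normalized adjoint Green function $\widehat u$ in \eqref{eq:def_hatu}. These are:
\begin{itemize}
\item Bourgain's estimate (Lemma~\ref{lem:bourgain}), and its adjoint version under the TFCDC;
\item boundary H\"older continuity (Lemma~\ref{lem:holder});
\item the CFMS-type upper bound for $G$ by $\omega$, and the boundary Caccioppoli inequality;
\item the averaged H\"older estimate for adjoint Green functions and \cite[Lemma 5.20]{BHMN}.
\end{itemize}
Remark~\ref{rem:estimates_BHMN} already records that these follow from Bourgain's estimate alone.

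The pole condition (a) in Definition~\ref{def:corona} places $\mbf{Y}_\S$ far in the future. This guarantees that $\widehat u$ is a positive adjoint solution throughout the relevant region of the whole tree, which is exactly what the BHMN arguments need.

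\textbf{Step 2: good gradient, Whitney components, and flat graphs.} Next I would run the ``good gradient'' argument of \cite[Section 6]{BHMN} to get \eqref{eq:good_gradient} and \eqref{eq:upper_bound_u_enlarged}.
\begin{itemize}
\item The doubling-type control of $\omega^{\mbf{Y}_\S}$ on cubes of $\S$ comes from (c) of Definition~\ref{def:corona}.
\item The only place where the maximal function enters is \cite[(6.33)]{BHMN}. There I would replace $L^1$ averages of $\mathcal M_\sigma\mu$ by $L^{1/2}$ averages squared, using H\"older/Kolmogorov so the same bound results.
\item The appeal to \cite[Lemma 2.6]{BHMN} in Section 7.4 is replaced by Lemma~\ref{lem:2.6BHMN}. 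This ensures closest boundary points lie on lateral faces, which is needed to select the good Whitney component $\mbf U_Q^{\good}$.
\end{itemize}
Then the subdivision of $\S$ into graph trees $\S^*$ (Lemma 6.45, Prop.~8.2) and the definitions of $d$, $D$, the Whitney cubes $I_i$ and $\psi_{\S^*}$ (Section 9) go through unchanged, since they use only p-ADR and the level-set geometry of $\widehat u$. This yields \eqref{eq:tops_small}, \eqref{eq:diam_I_i}, \eqref{eq:psi_zero_far}, \eqref{eq:psi_lipschitz}, \eqref{eq:Gamma_approximates}, \eqref{eq:approx_I_i} and \eqref{eq:containment}, with $\varepsilon\le K_0^{-100}$ as in \cite[(6.34)]{BHMN}.

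\textbf{Step 3: packing.} Finally, the packing of the tops $Q(\S^*)$ and of the bad cubes $\B_\S$ follows \cite[Section 11]{BHMN}. I would check that $A_\infty$ is used there only through regularity of the graphs $\Gamma_{\S^*}$, which is therefore taken as a hypothesis in the last bullet.

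\textbf{Main obstacle.} I expect the main difficulty to be Step 2: making sure that no hidden use of TSADR (for instance lower time-directed density bounds used to produce nondegenerate gradients) remains in \cite[Sections 6--7]{BHMN}. My first approach would be to trace every such use back to Bourgain's estimate or H\"older decay. Where a genuine density lower bound is used, I would replace it by the capacitary lower bound from the TSCDC, with $\capac\approx r^n$ as in Remark~\ref{rem:capac_cylinders}, mimicking the capacitary-potential argument of Step 2 in Proposition~\ref{prop:GMT}.
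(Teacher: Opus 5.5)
Your proposal follows the paper's route. The paper likewise imports \cite[Sections 6--9 and 11]{BHMN} as a black box, with the same modifications you list: it adapts only the computation \cite[(6.33)]{BHMN} to $L^{1/2}$ maximal averages, checks the Section 5 PDE estimates via Lemmas~\ref{lem:bourgain}, \ref{lem:holder} and Remark~\ref{rem:estimates_BHMN}, replaces \cite[Lemma 2.6]{BHMN} by Lemma~\ref{lem:2.6BHMN}, and obtains the packing from \cite[Section 11]{BHMN} under the assumed regularity of the graphs. The paper asserts that the rest of \cite[Sections 6--9]{BHMN} uses only non-time-directed p-ADR, so the capacitary substitute for TSADR density bounds in your last paragraph is not needed.
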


It is important to note that the last item is conditioned on the fact that the graphs $\Gamma_{\S^*}$ are regular. Indeed, it is known in this setting that the local $N \lesssim S$ (non-tangential maximal function bounded by square function) estimates hold in the domain \textit{above} the graph $\Gamma_{\S^*}$. Roughly speaking, in \cite{BHMN} the corona decomposition yields a local square function estimate in each caloric measure tree $\S$ involving $|\nabla^2 \widehat{u}|$ so that the small oscillation of the gradient in appropriate Whitney regions as in \cite[(8.23)]{BHMN}, along with the local $N \lesssim S$ estimates guarantee that the graph trees inside of $\S$ contribute to the (finite) local square function estimate. In truth, the graph trees are broken up into different cases, and this argument (using the small oscillations of the gradient of $\widehat{u}$ along with the local $N \lesssim S$ estimate) is used to handle the difficult case.

A standard consequence of the corona decomposition for the caloric measure $\omega$ from Theorem~\ref{th:corona} is the fact that one can get upper bounds for Green functions in the Whitney regions associated to the good cubes in the corona decomposition (lower bounds as in \eqref{eq:good_gradient} are always trickier, and that is why they are only obtained in \textit{some} regions, not everywhere). In the next lemma, which does not rely on Proposition~\ref{prop:WHSA}, we obtain these bounds for the adjoint Green function that we have used to construct the approximating graphs. We remark that since $\widehat{u}$ is an adjoint solution, we need to use the TFCDC in order to obtain meaningful estimates.

\begin{lemma}[Interior estimates for Green's function] \label{lem:u_upper_bounds}
	Let $\Omega \subset \ree$ be an open set satisfying the TFCDC so that $\Sigma := \pom$ is p-ADR. Assume also that the caloric measure in $\Omega$ admits a corona decomposition.
	Fix a stopping time regime $\S$ provided by the corona decomposition, and let $\varepsilon > 0$ small enough (from Proposition~\ref{prop:WHSA}). If $Q \in \S$ satisfies $\ell(Q) \leq \varepsilon^{10} \ell(Q(\S))$, then the normalized adjoint Green function defined in \eqref{eq:def_hatu} satisfies the upper bounds
	\begin{equation*}
		\abs{\nabla \widehat{u} (\mbf{Y})} + \frac{\widehat{u}(\mbf{Y})}{\delta_\Omega(\mbf{Y})} 
		\leq 
		C_{\mathrm{int}}, 
		\qquad 
		\forall \, \mbf{Y} \in \widetilde{\mbf{U}}_Q,
	\end{equation*}
	where $C_{\mathrm{int}}$ depends on $\varepsilon$ (along with $n, C_{\mathrm{ADR}}, C_{\mathrm{CDC}}, C_{\mathrm{cor}}$). Concretely, the estimate is true for any $Q \in \S* \in \mathcal{S}^*(\S)$ from Proposition~\ref{prop:WHSA} because of \eqref{eq:tops_small}. 
\end{lemma}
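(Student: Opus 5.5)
The plan is to reduce both bounds to a single estimate, namely $\widehat u(\mbf{Y}) \lesssim_\varepsilon \delta_\Omega(\mbf{Y})$ on a slight enlargement of $\widetilde{\mbf{U}}_Q$. The gradient bound then follows from interior regularity for the adjoint heat equation. Indeed, $\widehat{u}$ is a nonnegative adjoint caloric function away from the pole. For $\mbf{Y} \in \widetilde{\mbf{U}}_Q$ the cylinder $\C_{\delta(\mbf{Y})/2}(\mbf{Y})$ lies in $\Omega$, and it stays away from $\mbf{Y}_\S$. This uses $\delta(\mbf{Y}) \lesssim \varepsilon^{-3}\ell(Q) \le \varepsilon^{7}\ell(Q(\S))$ together with Definition~\ref{def:corona}(a). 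Standard interior estimates (Caccioppoli, Lemma~\ref{lem:caccioppoli}, plus local boundedness applied to $\nabla\widehat u$) give $|\nabla \widehat u(\mbf{Y})| \lesssim \delta(\mbf{Y})^{-1}\sup_{\C_{\delta(\mbf{Y})/2}(\mbf{Y})} \widehat u$. The points of this cylinder have distance to the boundary comparable to $\delta(\mbf{Y})$, and they lie in a marginally larger chain region, say $\widetilde{\mbf{U}}_Q(\varepsilon/2)$. So it suffices to prove $\widehat u \lesssim_\varepsilon \delta_\Omega$ there.

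For the bound on $\widehat u$, I will use the Caffarelli–Fabes–Mortola–Salsa-type upper estimate, valid under the TFCDC by Remark~\ref{rem:estimates_BHMN} (\cite[Lemma 5.6, Remark 5.7]{BHMN}). It controls the adjoint Green function at a point $\mbf{Y}$ with $\delta(\mbf{Y}) \approx r$ by the caloric measure of a nearby surface ball:
\[
\widehat G(\mbf{Y}_\S; \mbf{Y}) \lesssim \frac{\omega^{\mbf{Y}_\S}(\C_{Cr}(\mbf{y})\cap\Sigma)}{r^{n}},
\]
where $\mbf{y}\in\Sigma$ is a closest point and the pole is far in the future relative to the scale $r$. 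Multiplying by the normalization in \eqref{eq:def_hatu} gives
\[
\widehat u(\mbf{Y}) \lesssim r\,\frac{\mu(\C_{Cr}(\mbf{y}))}{r^{n+1}} \approx r\,\frac{\mu(\C_{Cr}(\mbf{y}))}{\sigma(\C_{Cr}(\mbf{y}))}.
\]
Here $\mu$ is the normalized measure of Definition~\ref{def:corona}(c).

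It remains to bound this density by a constant depending on $\varepsilon$. By \eqref{eq:whitney_bounded} and the chain condition, $\mbf{y}$ lies within $C\varepsilon^{-4}\ell(Q)$ of $Q$, and $r\approx_\varepsilon \ell(Q)$. Hence $\C_{Cr}(\mbf{y})\cap\Sigma$ is contained in the enlargement $\C_{C\varepsilon^{-4}\ell(Q)}(\mbf{x}_Q)$, which is covered by boundedly many cubes of size $\approx_\varepsilon \ell(Q)$. This enlarged ball lies well inside $2Q(\S)$ because $\ell(Q)\le\varepsilon^{10}\ell(Q(\S))$.

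The main obstacle is that these neighboring cubes need not belong to $\S$. I will therefore not use the densities of neighbours. Instead I will use the maximal function in (c): for every $\mbf{x}\in Q$, the ball $\C_{C\varepsilon^{-4}\ell(Q)}(\mbf{x}_Q)$ contains $\mbf{x}$, so
\[
\frac{\mu(\C_{C\varepsilon^{-4}\ell(Q)}(\mbf{x}_Q))}{\sigma(\C_{C\varepsilon^{-4}\ell(Q)}(\mbf{x}_Q))}\le \mathcal{M}_\sigma(\restr{\mu}{2Q(\S)})(\mbf{x}).
\]
Taking square roots, averaging over $Q$ and applying (c) gives a density bound $\le C_{\mathrm{cor}}$ (the normalized average in (c) read as intended). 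By p-ADR the density of $\C_{Cr}(\mbf{y})$ is then at most $C_\varepsilon$.

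A secondary check is that the CFMS estimate applies at scale $r$ with the given pole, i.e.\ that $s_\S - s \gtrsim r^2$ and the pole is far relative to $r$. This follows from (a), since $r\ll\diam(Q(\S))$. Combining the three steps yields both terms $\le C_{\mathrm{int}}(\varepsilon)$. The final sentence of the lemma is immediate from \eqref{eq:tops_small}.
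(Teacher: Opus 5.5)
Your argument is correct and follows essentially the paper's route, which defers to \cite[Lemmas 6.53, 6.85]{BHMN}: a CFMS-type upper bound reduces $\widehat u/\delta_\Omega$ to the $\mu$-density of a nearby surface ball, the maximal-function term in Definition~\ref{def:corona}(c) controls that density at scale $\approx_\varepsilon \ell(Q)$, and interior estimates then give the gradient bound (the paper applies (c) to cubes of $\S$ of sidelength $\varepsilon^{-5}\ell(Q)$ above $Q$, whereas you apply it to $Q$ itself, which works equally well). One small correction: under the TFCDC alone, the appropriate reference for the CFMS bound in terms of $\omega^{\mbf{Y}_\S}$ is \cite[Lemma 5.20]{BHMN} (see the third bullet of Remark~\ref{rem:estimates_BHMN}), since \cite[Lemma 5.6, Remark 5.7]{BHMN} rely on Bourgain's estimate for $\omega$ and hence on the TBCDC; this is harmless wherever the lemma is actually used, because the TSCDC is assumed there.
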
 

These upper bounds are proved in \cite[Lemma 6.85]{BHMN}\footnote{
	Actually, let us check that the set $\widetilde{\mbf{U}}_Q$ is contained in $\Omega_{\mathcal{F}, Q_0}^{***}$ (this is notation in \cite{BHMN}, $Q_0$ in \cite{BHMN} plays the role of $Q(\S)$ in our text), which is the set where the estimates are proved in \cite[Lemma 6.85]{BHMN}. Indeed, unraveling the definitions in pp. 40, 45 and 46 in \cite{BHMN}, one ends up checking that $\bigcup_{Q \in \D_{\mathcal{F}, Q_0}^*} \widetilde{\mbf{U}}_Q \subset \Omega_{\mathcal{F}, Q_0}^{***}$, where $\D_{\mathcal{F}, Q_0}^* = \{ Q \in \S : \ell(Q) \leq \varepsilon^{10} \ell(Q(\S)) \}$ by \cite[(6.35)]{BHMN}, which already gives the result. 
}. Indeed, the proof is standard in the area. To prove the upper bound $\widehat{u} \lesssim \delta_\Omega$ in the simpler case that $\mbf{Y} \in \mbf{U}_Q$ (or rather, a slightly fattened version of it, to later be able to apply standard interior estimates to get the gradient bound), we proceed as in \cite[Lemma 6.53]{BHMN}: first use the CFMS-like upper bound from \cite[Lemma 5.20]{BHMN} (which is applicable under the TFCDC and p-ADR assumptions, as explained in Remark~\ref{rem:estimates_BHMN}) to control $\widehat{u}/\delta_\Omega$ by the caloric measure of corresponding surface balls, and then control those using the corona decomposition from Theorem~\ref{th:corona}. To extend the proof for the enlarged regions $\mbf{Y} \in \widetilde{\mbf{U}}_Q$, one needs to work a bit more because of the cumbersome definition of $\widetilde{\mbf{U}}_Q$ (see Subsection~\ref{subsec:whitney}), but the idea is the same as before, using the corona decomposition for the caloric measure with suitable dilations of $Q$ (indeed, it suffices to use cubes of sidelength $\varepsilon^{-5}\ell(Q)$, which are still within $\S$ because $\ell(Q) \leq \varepsilon^{10} \ell(Q(\S))$). For full details, see \cite[Lemma 6.85]{BHMN}.



\section{p-CME holds in the approximating graphical subdomains} \label{sec:pushing_CME}

In the previous section, we showed that a corona decomposition for $\omega$ implies the existence of interior approximating parabolic Lipschitz subdomains. By Theorem~\ref{th:corona_implies_UR}, it remains to prove that the boundaries of these subdomains, which are Lipschitz graphs, are regular. The key step of this section is to establish that the p-CME property is inherited by these approximating subdomains. Once this is achieved, the recent characterization of \cite{HW}, together with \cite{BHMN_graph}, immediately implies that the approximating graphs are regular.

\begin{theorem}[p-CME holds in the approximating subdomains] \label{th:push_cme}
	Let $\Omega \subset \ree$ be an open set satisfying the TSCDC (see Definition~\ref{def:TBCDC}) so that $\Sigma := \pom$ is p-ADR (see Definition~\ref{def:ADR}). Assume also that $\omega$ admits a corona decomposition (see Definition~\ref{def:corona}). 
	Fix a stopping time regime $\S$ provided by Definition~\ref{def:corona}, and a ``graph tree'' $\S^* \in \mathcal{S}*(\S)$ associated to it as in Proposition~\ref{prop:WHSA}. Then, p-CME holds in $\Omega_{\S^*}$, that is, for every $\overline{\mbf{y}} \in \partial \Omega_{\S^*}$, $r > 0$, and every bounded caloric function $u$ in $\Omega_{\S^*}$, it holds 
	\begin{equation} \label{goal:cme}
		\iiint_{\Omega_{\S^*} \cap \mbf{C}_r(\overline{\mbf{y}})} \abs{\nabla u(\mbf{X})}^2 \delta_{\Omega_{\S^*}}(\mbf{X}) \, d\mbf{X}
		\lesssim 
		\norm{u}_{L^\infty(\Omega_{\S^*})}^2 r^{n+1},
	\end{equation}
	with implicit constant depending only on $n, C_{\mathrm{ADR}}, C_{\mathrm{CDC}}$ and $C_{\mathrm{cor}}$.
	
	In particular, since $\Omega_{\S^*}$ are graphical domains with $\Lip(1,1/2)$ boundary (see Proposition~\ref{prop:WHSA}), \cite[Theorem 4.1]{HW} and \cite[Theorem 1.1]{BHMN_graph} imply that the graphs $\psi_{\S^*}$ are \textbf{regular} $\Lip(1,1/2)$, or equivalently, that $\partial \Omega_{\S^*}$ is parabolic uniformly rectifiable (see Subsection~\ref{subsec:UR}).
\end{theorem}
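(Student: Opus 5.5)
The plan is to reduce the CME on the graph domain $\Omega_{\S^*}$ to a local estimate near the top of the regime, and prove that local estimate by a Green-function integration by parts using $\widehat u$.

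\textbf{Step 1 (localization).} Fix $\overline{\mbf{y}}\in\partial\Omega_{\S^*}$, $r>0$, and a bounded caloric $u$ in $\Omega_{\S^*}$, normalized so that $\norm{u}_\infty=1$. By \eqref{eq:psi_zero_far}, outside $C'_{4\kappa R_{\S^*}}$ the domain $\Omega_{\S^*}$ coincides with a half-space. Also \eqref{eq:psi_lipschitz} makes $\psi_{\S^*}$ a Lip$(1,1/2)$ graph with tiny constant. I will split $\Omega_{\S^*}\cap\C_r(\overline{\mbf y})$ into three pieces:
\begin{itemize}
\item[(a)] the part at height $x_0\gtrsim \varepsilon^{-1/4}\ell(Q(\S^*))$ or with $(x,t)\notin C'_{\kappa R_{\S^*}}$;
\item[(b)] Whitney boxes of $\Omega_{\S^*}$ lying over cubes $I_i$ with $i\notin\Lambda$, or otherwise far from $F$ relative to their size;
\item[(c)] the bulk, contained in $\Omega'_{\S^*}$.
\end{itemize}
For (a) the graph is essentially flat at those scales. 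I would use the elementary fact that in the half-space, and more generally above a graph whose oscillation is tiny compared with the distance, interior estimates give $|\nabla u|\lesssim 1/\delta$. Then the integral of $|\nabla u|^2\delta$ over a region of bounded "flat" scales is handled by the standard half-space CME, proved via integration by parts against $x_0$ and energy estimates. Boundary Caccioppoli on the graph absorbs the transition. For (b), \eqref{eq:diam_I_i} says that near $I_i$ the graph lives at scale $\diam(I_i)$. The Whitney boxes adjacent to $I_i$ contribute at most $\ell(I_i)^{n+1}$ each, by Caccioppoli and $|u|\le1$, and a Carleson sum over such $I_i$ inside $\C_r$ is bounded by $r^{n+1}$ because the $I_i$ are disjoint.

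\textbf{Step 2 (main term via $\widehat u$).} On $\Omega'_{\S^*}$ I will use \eqref{eq:containment}: every point lies in some $\widetilde{\mbf U}^\good_Q$ with $Q\in\S^*$. There Lemma~\ref{lem:u_upper_bounds} gives $\widehat u\lesssim\delta_\Omega$ and $|\nabla\widehat u|\lesssim1$, and \eqref{eq:upper_bound_u_enlarged} gives $\delta_\Omega\lesssim\widehat u$. Thus $\widehat u\approx\delta_\Omega$ on $\Omega'_{\S^*}$. Moreover \eqref{eq:approx_I_i} and \eqref{eq:diam_I_i} give $\delta_{\Omega_{\S^*}}\lesssim\delta_\Omega$ there; I will verify this by comparing the distance to the graph with $D(x,t)$. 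Hence
\[
\iiint |\nabla u|^2\delta_{\Omega_{\S^*}}\lesssim \iiint |\nabla u|^2\,\widehat u\,\Phi ,
\]
with $\Phi$ a smooth cutoff adapted to $\C_{2r}(\overline{\mbf y})\cap\Omega'_{\S^*}$. Since $u^2$ is subcaloric with $H(u^2)=-2|\nabla u|^2$ and $\widehat u$ is adjoint caloric, I will integrate by parts:
\[
2\iiint|\nabla u|^2\widehat u\Phi=\iiint u^2\,H^*(\widehat u\Phi)
\]
plus boundary terms. The interior terms are $u^2\widehat u H^*\Phi$ and $u^2\nabla\widehat u\cdot\nabla\Phi$. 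Using $|\nabla\Phi|\lesssim 1/r$, $|\partial_t\Phi|\lesssim r^{-2}$, together with $\widehat u\lesssim r$ and $|\nabla\widehat u|\lesssim1$ on the support, both terms are $\lesssim r^{n+1}$.

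\textbf{Step 3 (the main obstacle).} The cutoff must live in $\Omega'_{\S^*}$, which stops at the graph. Near $\partial\Omega_{\S^*}$, $\widehat u$ does not vanish, since the graph lies inside $\Omega$. So I will take $\Phi$ to also vanish in a layer $\{\delta_{\Omega_{\S^*}}<\eta\,\delta_\Omega\}$ adapted to the $I_i$, and let the layers shrink. The derivative of $\Phi$ across this layer over $I_i$ is $\sim1/\diam(I_i)$. It is multiplied by $u^2\widehat u\lesssim \delta_\Omega\lesssim\varepsilon^{1/2}\diam(I_i)$ by \eqref{eq:approx_I_i}, integrated over a layer of volume $\approx\diam(I_i)^{n+1}$. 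So the contribution is $\lesssim\sum_{I_i\subset C'_{2r}}\diam(I_i)^{n+1}\lesssim r^{n+1}$. The time-derivative term requires a layer in parabolic geometry; I expect this is exactly the delicate point and would handle it by using parabolic-Whitney cutoffs with $|\partial_t\Phi|\lesssim\diam(I_i)^{-2}$. Here the Lip$(1,1/2)$ smallness \eqref{eq:psi_lipschitz} keeps the layers graph-like. Over $\pi(F)$ the graph is genuinely $\Sigma$, where $\widehat u=0$, so no boundary term arises there. Summing Steps 1--3 gives \eqref{goal:cme}. The final regularity claim then follows directly from \cite[Theorem 4.1]{HW} and \cite[Theorem 1.1]{BHMN_graph}.
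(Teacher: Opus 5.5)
There is a genuine gap. Nothing in your argument controls the part of $\Omega_{\S^*}$ lying just above the graph over the cubes $I_i$, i.e.\ the region $\psi_{\S^*}(x,t)<x_0\lesssim \psi_{\S^*}(x,t)+c\,D(x,t)$ with $(x,t)\in I_i$. This region reaches down to $\Gamma_{\S^*}$ and contains Whitney boxes of every scale below $\diam(I_i)$.

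\begin{itemize}
\item Your item (b) does not cover it. Caccioppoli and $|u|\le 1$ give $\lesssim \ell(I_i)^{n+1}$ for each dyadic layer of Whitney boxes over $I_i$. But there are infinitely many such layers, and a bound uniform in the number of layers is exactly the content of a Carleson estimate.
\item Your $\widehat u$-argument does not absorb it either. Over $I_i$ the graph lies inside $\Omega$, so on $\Omega'_{\S^*}$ you have $\widehat u\approx\delta_\Omega$ all the way down to $\Gamma_{\S^*}$, where $\delta_\Omega$ is in general positive. Hence $\widehat u\gg\delta_{\Omega_{\S^*}}$ near the graph.
\item The computation in your Step 3 is off. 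The layer $\{\delta_{\Omega_{\S^*}}<\eta\,\delta_\Omega\}$ has thickness $h\approx\eta\,\delta_\Omega|_{\Gamma_{\S^*}}$, not $\diam(I_i)$. So $|\Delta\Phi|+|\partial_t\Phi|\sim h^{-2}$ there, and the term $\iiint u^2\widehat u\,(|\Delta\Phi|+|\partial_t\Phi|)$ over the layer is of size $(\delta_\Omega|_{\Gamma_{\S^*}}/h)\,\ell(I_i)^{n+1}\approx\eta^{-1}\ell(I_i)^{n+1}$. This blows up as the layers shrink; in the limit it becomes the boundary term $\iint_{\Gamma_{\S^*}}\widehat u\,u\,\partial_\nu u$, which $\norm{u}_\infty$ does not control. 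If you instead keep $\eta$ fixed, the thin layer next to $\Gamma_{\S^*}$ is simply left uncovered.
\item Your item (a) has the same problem over $C'_{4\kappa R_{\S^*}}\setminus C'_{\kappa R_{\S^*}}$. There the graph has small $\Lip(1,1/2)$ constant but is not flat. p-CME is not available above a general $\Lip(1,1/2)$ graph, however small its constant: by \cite{HW,BHMN_graph} it would force regularity, which fails in general (cf.\ \cite{KW}). Interior bounds $|\nabla u|\lesssim1/\delta$ and boundary Caccioppoli (which needs $u$ to vanish on the boundary) cannot substitute for this.
\end{itemize}

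The missing ingredient is the local regularity of $\psi_{\S^*}$ at the scale of the $I_i$. By Lemma~\ref{lem:DDH}, on $100nI$ the graph agrees with a regular $\Lip(1,1/2)$ graph, for each dyadic $I\subset I_i$ with $\ell(I)=2^{-k_0}\ell(I_i)$. Lemma~\ref{lem:ur_implies_cme} then gives the bound $\lesssim\norm{u}_\infty^2\ell(I_i)^{n+1}$ on the strip $\SStrip(I_i)$ of height $c_2D$ over $I_i$. Disjointness of the $I_i$, together with $\diam(I_i)\lesssim r$ when $r\gtrsim D(\overline y,\overline s)$, sums these bounds to $r^{n+1}$.

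Only above the second graph $\widetilde\psi_{\S^*}=\psi_{\S^*}+c_2D$ do you get $\delta_\Omega\lesssim\delta_{\Omega_{\S^*}}$ via \eqref{eq:approx_I_i}, hence $\widehat u\lesssim\delta_{\Omega_{\S^*}}$. That is what makes all cutoff terms bounded through \eqref{eq:cutoff}, with no boundary term at all.

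For the same reason, your bound $\widehat u\lesssim r$ on $\C_{2r}(\overline{\mbf y})$ fails when $r\ll D(\overline y,\overline s)$. There $\widehat u$ is only bounded by about $\varepsilon^{1/2}\diam(I_{i_0})+r$, where $(\overline y,\overline s)\in I_{i_0}$, and this can be far larger than $r$. That regime must be handled entirely by the strips, as in the paper's Case 2.
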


At a high level, the proof combines two complementary arguments: local estimates near the boundary, obtained from the regularity of the approximating graphs; and a global integration by parts argument in the interior, which exploits the stopping-time geometry and the Green function estimates coming from the corona decomposition. More precisely:
\begin{itemize}
	\item We first estimate the contribution to \eqref{goal:cme} coming from points close to $\partial\Omega_{\S^*}$. The boundary naturally decomposes into the contact set $\pi(F)$ (recall Proposition~\ref{prop:WHSA}), where $\partial\Omega_{\S^*} = \Gamma_{\S^*}$ coincides with $\partial\Omega = \Sigma$; and its complement, where the boundary is locally given by parabolic Lipschitz graphs above Whitney cubes $I_i$. Near the contact set, the corona decomposition allows us to compare caloric measure and surface measure, yielding the desired Carleson measure estimates.	Away from the contact set, the boundary is locally regular (Lemma~\ref{lem:DDH}),	so we may instead invoke the local implication p-UR $\implies$ p-CME from Lemma~\ref{lem:ur_implies_cme}.
	
	\item Then, we are left with estimating the part of $\Omega_{\S^*} \cap \C_r(\overline{\mbf{y}})$ in \eqref{goal:cme} which is outside of regions surrounding $\Gamma_{\S^*} = \partial \Omega_{\S^*}$. In particular, we will work inside a sawtooth region of the original domain $\Omega$ associated to $\S^*$, that is, one only needs to consider Whitney regions of $\Omega$ associated to cubes in $\S^*$ (this is the purpose of the stopping time distances).  Moreover, inside these Whitney regions, upper bounds for Green's functions (namely Lemma~\ref{lem:u_upper_bounds}) hold because of Theorem~\ref{th:corona} (and $\S^* \subset \S$), which will allow us to carry out an integration by parts scheme to finish the proof of \eqref{goal:cme}.
\end{itemize}

As discussed above, away from the contact set $\pi(F)$ from Proposition~\ref{prop:WHSA}, the graph $\psi_{\S^*}$ enjoys additional regularity. More precisely, at the scales of the Whitney decomposition from Proposition~\ref{prop:WHSA}, it agrees locally with regular Lip(1,1/2) graphs.

\begin{lemma}[{\cite[Lemma 10.3]{BHMN}}] \label{lem:DDH} 
	Under the hypotheses of Theorem~\ref{th:push_cme}, there exists $k_0 \in \N$ and $b$, depending only on $n$, such that the following holds: for any $I_i$ provided by Proposition~\ref{prop:WHSA}, if $I$ is a dyadic subcube of $I_i$ with sidelength $\ell(I) = 2^{-k_0}\ell(I_i)$, then $200n I \subset 3I_i$ and there exists a \textbf{regular} $\Lip(1,1/2)$ function $\psi_I$, with constants $b_1 + b_2 \leq b$, such that $\psi_I = \psi_{\S^*}$ on $100n I$. 
\end{lemma}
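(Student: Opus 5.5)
The plan is to use the explicit form that the construction of \cite[Section 9]{BHMN} gives to $\psi_{\S^*}$ off the contact set. On $\Rn\setminus\pi(F)$ the function $\psi_{\S^*}$ is a Whitney-type extension. I recall it as $\psi_{\S^*}=\sum_{j}\phi_j\,A_j$, where $\{\phi_j\}$ is a smooth parabolic partition of unity subordinate to the dilated cubes $\{(1+\vartheta)I_j\}$, and each $A_j$ is a constant (or a $t$-independent affine function) chosen from the geometry near $I_j$. I am relying on memory for this form and would check it against \cite[Section 9]{BHMN} first. The partition functions satisfy $|\partial_x^\alpha\partial_t^m\phi_j|\lesssim \ell(I_j)^{-|\alpha|-2m}$.

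\textbf{Step 1: choice of $k_0$.} For a dyadic $I\subset I_i$ with $\ell(I)=2^{-k_0}\ell(I_i)$, the centre of $I$ lies in $I_i$. Since $\diam(200nI)\lesssim n\,2^{-k_0}\ell(I_i)$, choosing $k_0$ so that $200n\,2^{-k_0}$ is small enough (depending only on $n$) gives $200nI\subset 3I_i$. This step is purely combinatorial.

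\textbf{Step 2: smoothness of $\psi_{\S^*}$ on $3I_i$.} By \eqref{eq:diam_I_i}, every $I_j$ with $(1+\vartheta)I_j\cap 3I_i\neq\emptyset$ has $\ell(I_j)\approx\ell(I_i)$, and there are boundedly many such $j$ (depending on $n$). Set $c:=\psi_{\S^*}(x_I,t_I)$, the value at the centre of $I$. Using $\sum_j\phi_j\equiv1$, we can write $\psi_{\S^*}-c=\sum_j\phi_j(A_j-c)$ on $3I_i$. The Lip$(1,1/2)$ bound \eqref{eq:psi_lipschitz}, together with the fact that each $A_j$ is comparable to values of $\psi_{\S^*}$ near $I_j$, gives $|A_j-c|\lesssim\ell(I_i)$. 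Differentiating then yields, on $3I_i$,
\[
|\nabla_x^k\partial_t^m\psi_{\S^*}|\lesssim \ell(I_i)^{1-k-2m},\qquad k+2m\le 3 .
\]

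\textbf{Step 3: the localized function.} Take a smooth cutoff $\eta$ with $\eta\equiv1$ on $100nI$, $\supp\eta\subset 200nI$ and $|\partial_x^\alpha\partial_t^m\eta|\lesssim\ell(I)^{-|\alpha|-2m}$. Define $\psi_I:=c+\eta\,(\psi_{\S^*}-c)$. Then $\psi_I=\psi_{\S^*}$ on $100nI$. Moreover $\psi_I-c$ is compactly supported in $200nI$ and obeys the same derivative bounds as in Step 2, now at scale $\ell(I)\approx_{k_0}\ell(I_i)$. This gives the Lip$(1,1/2)$ bound $b_1\lesssim1$ at once.

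For the half derivative, fix $(x,t)$ and split the principal value integral defining $D^t_{1/2}\psi_I(x,t)$ at $|s-t|=\ell(I)^2$. On the near part, use $|\partial_t\psi_I|\lesssim\ell(I)^{-1}$, so the integrand is at most $\ell(I)^{-1}|s-t|^{-1/2}$, whose integral is $\lesssim1$. On the far part, use $|\psi_I-c|\lesssim\ell(I)$ with $\int_{|s-t|>\ell(I)^2}|s-t|^{-3/2}ds\approx\ell(I)^{-1}$. Hence $\|D^t_{1/2}\psi_I\|_{L^\infty}\lesssim1$, so $D^t_{1/2}\psi_I\in\BMO_\parab$ with $b_2\lesssim1$, depending only on $n$ once $k_0$ is fixed.

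\textbf{Main obstacle.} The delicate point is Step 2. It needs the precise form of the extension in \cite[(9.30)]{BHMN} and the bound $|A_j-c|\lesssim\ell(I_i)$ uniformly, rather than merely Lipschitz information. If the $A_j$ are instead defined through local approximating planes, I would control their mutual discrepancies, and their tilt via the flatness in \eqref{eq:psi_lipschitz}, by the standard Whitney-extension estimates. Those comparisons hold because neighbouring $I_j$ are built from cubes $Q\in\S^*$ at comparable stopping-time distance, by \eqref{eq:diam_I_i}.
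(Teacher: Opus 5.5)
Your argument is correct, but the paper has no proof to compare it with: the lemma is imported verbatim from \cite[Lemma 10.3]{BHMN}. What you wrote is therefore a self-contained reconstruction, and it makes visible what the citation hides. The lemma is a statement about the specific extension formula \cite[(9.30)]{BHMN}. It does not follow from the properties listed in Proposition~\ref{prop:WHSA}: those survive, up to constants, if one adds to $\psi_{\S^*}$ a tiny nonnegative non-regular $\Lip(1,1/2)$ bump supported inside a single $I_i$, whereas the conclusion does not. So your flagging of (9.30) as the crux is right, and the dependence is essential rather than technical.

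Given that $\psi_{\S^*}=\sum_j\phi_jA_j$ with $t$-independent affine $A_j$, your two ingredients are the right ones. The first is smoothness of $\psi_{\S^*}$ at scale $\ell(I_i)$ on $3I_i$. The second is the fact that a constant plus a perturbation of height $\lesssim\ell$, with time derivative $\lesssim\ell^{-1}$ and support at scale $\ell$, has $D^t_{1/2}$ in $L^\infty$. Your splitting at $|s-t|=\ell(I)^2$ proves this, which is stronger than the required $\BMO_\parab$ bound. Using the constant $c$ rather than $A_i$ as background loses the smallness $b_1\lesssim\varepsilon^{1/2}$. The lemma only needs $b_1+b_2\lesssim_n 1$, which you get because $C_{\mathrm{LA}}\varepsilon^{1/2}\le 1$.

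Two steps should be written out.

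First, say why the extension formula governs $\psi_{\S^*}$ on all of $3I_i$. By \eqref{eq:diam_I_i}, $D\ge 10\diam(I_i)>0$ on $10I_i\supset 3I_i$, while $D=0$ on $\pi(F)$.

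Second, the bound $|A_j-c|\lesssim\ell(I_i)$ does not follow from \eqref{eq:psi_lipschitz} alone, since $\psi_{\S^*}$ is only a convex combination of the $A_j$. You can obtain it in one of two ways:
\begin{itemize}
\item Use the neighbour-compatibility estimates $|A_j-A_k|\lesssim\varepsilon^{1/2}\ell(I_j)$ on overlapping supports. This is the standard route to the $\Lip(1,1/2)$ bound for David--Semmes-type extensions.
\item Use that $\phi_j\equiv1$ on a core of $I_j$ of comparable size (true for the usual normalized partition of unity). Then $A_j=\psi_{\S^*}$ there, and $t$-independence of $A_j$ together with \eqref{eq:psi_lipschitz} bounds both its slope and $|A_j-c|$ on $(1+\vartheta)I_j$.
\end{itemize}
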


The second ingredient is a local version of the implication p-UR $\implies$ p-CME. Roughly speaking, it asserts that bounded caloric functions satisfy Carleson measure estimates in sufficiently small cylinders of regular Lip(1,1/2) domains. This local estimate ultimately goes back to \cite{HL05}.

\begin{lemma}[{p-UR implies p-CME, local version, \cite[Lemma 4.1]{BHHLN}}] \label{lem:ur_implies_cme}
	Let $\psi: \R^n \to\R$ be a regular $\Lip(1,1/2)$ function, with constants bounded by $b$, and let 
	\begin{equation*}
		\mathcal{U}
		:=
		\big\{ (x_0, x, t) \in \R \times \R^{n-1} \times \R : \; x_0 > \psi(x, t) \big\}.
	\end{equation*}
	Then there exist $M_0, C > 0$, depending only on $n$ and $b$, such that if $u$ is a bounded solution to $\partial_t u - \Delta u = 0$ in $\mathcal{U} \cap \mbf{C}_{M_0 r}(\mbf{x})$, for any $\mbf{x} \in \partial \mathcal{U}$ and $r > 0$, then it holds 
	\begin{equation*}
		\iiint_{\mbf{C}_r(\mbf{x}) \cap \mathcal{U}} \abs{\nabla u(\mbf{X})}^2 \delta_{\mathcal{U}}(\mbf{X}) \, d\mbf{X}
		\leq 
		C \norm{u}_{L^\infty(\mbf{C}_{M_0r}(\mbf{x}) \cap \mathcal{U})}^2 r^{n+1}.
	\end{equation*}
\end{lemma}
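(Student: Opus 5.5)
The plan is to reduce to a flat half-space with a localized integration by parts, using the regularity hypothesis $D^t_{1/2}\psi\in\mathrm{BMO}_{\parab}$ only through Carleson-measure estimates for the coefficients produced by the flattening map. By translation and parabolic dilation I will assume $\mbf{x}=\mbf{0}$ and $r=1$, and normalize $\norm{u}_{L^\infty(\mbf{C}_{M_0}\cap\mathcal{U})}=1$.

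\textbf{Step 1: flattening.} Following Lewis--Murray and Hofmann--Lewis \cite{LM, HL}, fix a smooth even mollifier $P$ on $\R^n$ with parabolic dilations $P_\lambda$, and a large constant $\gamma=\gamma(n,b)$. Consider the map
\begin{equation*}
\rho(\lambda,x,t):=\big(\gamma\lambda+P_\lambda\psi(x,t),\,x,\,t\big),\qquad \lambda>0 .
\end{equation*}
For $\gamma$ large this is a bi-Lipschitz (in the parabolic sense) bijection of the upper half-space onto $\mathcal{U}$. It satisfies $\delta_{\mathcal{U}}(\rho(\lambda,x,t))\approx\lambda$. It sends $\{\lambda<c\}\times C'_{c}$ into $\mathcal{U}\cap\mbf{C}_1$, and $\mbf{C}_{1}\cap\mathcal{U}$ into $\{\lambda\lesssim 1\}\times C'_{C}$. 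The pullback $v:=u\circ\rho$ solves
\begin{equation*}
\partial_t v-\mathrm{div}(A\nabla v)=B\,\partial_\lambda v .
\end{equation*}
Here $A$ is bounded, uniformly elliptic and $t$-dependent, and $\nabla A$ is controlled by $|\nabla^2P_\lambda\psi|$. The drift is $B\approx-\partial_tP_\lambda\psi/(\gamma+\partial_\lambda P_\lambda\psi)$. The standard consequences of $\psi$ being regular $\Lip(1,1/2)$ are that
\begin{equation*}
\big(\lambda|\nabla^2P_\lambda\psi|^2+\lambda|\partial_tP_\lambda\psi|^2\big)\,d\lambda\,dx\,dt
\end{equation*}
is a Carleson measure, and that $\lambda|\partial_tP_\lambda\psi|+\lambda|\nabla^2 P_\lambda\psi|\lesssim 1$ pointwise. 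Choosing $M_0$ large ensures that $v$ is defined and bounded by $1$ on $\{0<\lambda<C\}\times C'_{C}$, which is all we need.

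\textbf{Step 2: integration by parts.} Since $|\nabla u|^2\delta_{\mathcal{U}}\approx|\nabla v|^2\lambda$ after the change of variables, it suffices to bound $\iiint|\nabla v|^2\lambda\,\phi$. Here $\phi$ is a smooth cutoff equal to $1$ on $\rho^{-1}(\mbf{C}_1\cap\mathcal{U})$ and supported where $v$ is defined. By ellipticity,
\begin{equation*}
\iiint |\nabla v|^2\lambda\phi\lesssim\iiint A\nabla v\cdot\nabla v\,\lambda\phi .
\end{equation*}
I will write $A\nabla v\cdot\nabla v=\tfrac12\big(\mathrm{div}(A\nabla v^2)-\partial_tv^2\big)+v\,B\partial_\lambda v$ and integrate by parts against $\lambda\phi$, which produces the following terms.
\begin{itemize}
\item The $\partial_t$ term falls on $\phi$ and is $O(1)$.
\item The divergence term gives $\iint A\nabla v^2\cdot\nabla(\lambda\phi)$. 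Its part containing $\phi\, e_\lambda$ becomes, after a second integration in $\lambda$, a boundary term $\iint_{\lambda=0}v^2\,\partial_\lambda(\ldots)$, bounded by a constant, plus terms carrying $\partial_\lambda A$. The latter are absorbed via Cauchy--Schwarz, $\lambda|\nabla A|^2$ Carleson, and the Carleson embedding for bounded $v$. Terms with $\nabla\phi$ are supported away from $\lambda=0$ and handled by interior Caccioppoli (Lemma~\ref{lem:caccioppoli}) and boundedness.
\end{itemize}

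\textbf{Step 3: the drift term (main obstacle).} What remains is
\begin{equation*}
\iiint v\,B\,\partial_\lambda v\,\lambda\phi .
\end{equation*}
A direct Cauchy--Schwarz gives
\begin{equation*}
\Big(\iiint|\nabla v|^2\lambda\phi\Big)^{1/2}\Big(\iiint|B|^2\lambda\phi\Big)^{1/2}.
\end{equation*}
The second factor is $\lesssim1$ precisely because $\lambda|\partial_tP_\lambda\psi|^2$ is Carleson, which is where the regularity of $\psi$ enters. The first factor is then absorbed into the left-hand side, provided it is a priori finite; to guarantee this I will first run the argument with $\lambda$ replaced by $\max\{\lambda,\eta\}$ and then let $\eta\to0$. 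The delicate points I expect here are twofold. First, the absorption must be done with small constants, using Young's inequality rather than plain Cauchy--Schwarz. Second, the interplay of $\partial_tP_\lambda\psi$ with the time-integration by parts must be arranged so that only the Carleson quantity $\lambda|\partial_tP_\lambda\psi|^2$, and never $|\partial_t\psi|$ itself, appears. If the naive absorption fails, I would instead follow \cite{HL05}: write $\partial_tP_\lambda\psi=D^t_{1/2}H_tD^t_{1/2}P_\lambda\psi$ and move a half time-derivative onto $v\phi$, controlling it by $\lambda|\partial_tv|^2\lesssim\lambda^{-1}|\nabla v|^2$-type interior bounds. Collecting the terms yields the estimate with $C=C(n,b)$.
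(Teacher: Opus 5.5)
The paper does not prove this lemma; it imports it from \cite[Lemma 4.1]{BHHLN}, which ultimately rests on \cite{HL05}. Read as a self-contained argument, your proposal has a genuine gap. The gap is in Step 2, not in the drift term.

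When you integrate $\frac12\iiint \mathrm{div}(A\nabla v^2)\,\lambda\phi$ by parts, the piece $\phi\,e_\lambda$ of $\nabla(\lambda\phi)$ produces the \emph{unweighted} term $-\iiint v\,(A\nabla v)\cdot e_\lambda\,\phi$. Moving the derivative off $v^2$ turns this into $\frac12\iiint v^2\,\mathrm{div}(A^{T}e_\lambda)\,\phi$, plus a harmless boundary term at $\lambda=0$ and terms with $\nabla\phi$.

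This bulk term contains no $\nabla v$, so there is nothing to absorb. Cauchy--Schwarz against $\lambda|\nabla A|^2$ is useless here, since it would require $\iiint v^4\lambda^{-1}\phi<\infty$. With only $\norm{v}_\infty$ available, you would need $|\nabla A|$ itself, not $\lambda|\nabla A|^2$, to be a Carleson measure. That fails for the pullback of a general Lipschitz graph, because $|\nabla^2P_\lambda\psi|\approx\lambda^{-1}$ can persist at all scales.

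The pieces involving tangential derivatives $\partial_{x_j}$ of the entries of $A^Te_\lambda$ can be rescued. Write $1=\partial_\lambda\lambda$ and integrate by parts twice; this produces terms of the form $\lambda\,v\,\nabla v\,\nabla A$, which Cauchy--Schwarz handles. The normal piece, coming from the nonconstant $(\lambda,\lambda)$ entry of $A$, cannot be rescued this way. The same manoeuvre, or using the equation to trade $\partial_\lambda^2 v$ for tangential derivatives, just reproduces $\iiint A\nabla v\cdot\nabla v\,\lambda\phi$. Bounding that term for solutions is the lemma itself.

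This obstruction is already present when $\psi$ is $t$-independent, hence trivially regular, and $\partial_tP_\lambda\psi\equiv 0$. Your Cauchy--Schwarz/Young treatment of the drift is fine, since $\lambda|\partial_tP_\lambda\psi|^2$ is indeed Carleson. So the ``main obstacle'' is misidentified: the real problem is that $\lambda$ is not an adjoint solution of the pulled-back operator.

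What is missing is an ingredient that encodes quantitative absolute continuity of caloric measure. One standard route has two steps:
\begin{enumerate}
\item Integrate by parts against an exact adjoint solution, namely the adjoint Green function $\widehat G$ of $\mathcal U$ with a distant pole. Then $H^*\widehat G=0$ kills the bulk term; this is exactly the mechanism of Substep~2.3 in the proof of Theorem~\ref{th:push_cme}.
\item Compensate for $\widehat G\not\approx\delta_{\mathcal U}$ by a stopping-time argument. This uses the $A_\infty$ property of (adjoint) caloric measure on regular $\Lip(1,1/2)$ graph domains, due to Lewis--Murray \cite{LM} (see also \cite{HL}).
\end{enumerate}
Alternatively, you could work within the singular-drift theory of \cite{HL01, HL05}.

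Two smaller points. First, the $\nabla\phi$ terms are not supported away from $\lambda=0$: a cutoff equal to $1$ on $\rho^{-1}(\mbf{C}_1\cap\mathcal U)$ has lateral gradient reaching the boundary. Take $\phi=\eta^2$ and use Young's inequality instead of interior Caccioppoli. Second, replacing $\lambda$ by $\max\{\lambda,\eta\}$ does not make the left-hand side a priori finite, since a bounded solution can have infinite Dirichlet energy near $\partial\mathcal U$. Vertical translates $u(x_0+\eta,x,t)$ are caloric across $\partial\mathcal U$, and Fatou's lemma then lets $\eta\to0$.
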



The rest of the section is devoted to the \textbf{proof of Theorem~\ref{th:push_cme}}. Fix $(\overline{y}, \overline{s}) \in \Rn$ and $r > 0$, and denote 
\begin{equation*}
	\overline{\mbf{y}} := (\psi_{\S^*}(\overline{y}, \overline{s}), \overline{y}, \overline{s}).
\end{equation*} 
This clearly plays the role of a generic point of $\partial \Omega_{\S^*}$, since $\Omega_{\S^*}$ is a graph domain.
Fix also $u$ a bounded caloric function in $\Omega_{\S^*}$, i.e. $H u = (\partial_t - \Delta) u = 0$ in $\Omega_{\S^*}$. Our goal is to show \eqref{goal:cme}.

To fix ideas, let us stay localized inside the region where $\psi_{\S^*}$ is nontrivial (see \eqref{eq:psi_zero_far}). Indeed, let us assume that 
\begin{equation} \label{eq:assumption_local}
	C'_r(\overline{y}, \overline{s}) \subset C'_{\frac12 \kappa R_{\S^*}}(x_{Q(\S^*)}, t_{Q(\S^*)}),
\end{equation}
and we will remove this assumption in Subsection~\ref{subsec:gral_case}.
Here, we recall that $(x_{Q(\S^*)}, t_{Q(\S^*)})$ is the projection $ \pi(\mbf{x}_{Q(\S^*)})$ (i.e. removing the first coordinate) of the ``center'' of $Q(\S^*)$. Recall also that $\kappa = C_{\mathrm{LA}} K_0$ and $R_{\S^*} = \diam(Q(\S^*))$ were defined in Proposition~\ref{prop:WHSA}.

Moreover, fix $0 < \mu < 1/2$ small enough, whose value will be determined later depending only on $n$. We divide the proof in two cases:
\begin{itemize}
	\item Case 1: $r \geq \mu D(\overline{y}, \overline{s})$.
	\item Case 2: $r < \mu D(\overline{y}, \overline{s})$.
\end{itemize}


\subsection{Proof in Case 1} 

Let us first illustrate how to deal with Case 1, which incorporates the key new ideas. Later, in Subsection~\ref{subsec:case_2} we will explain how Case 2 essentially reduces to Step 1 of Case 1, which we introduce now.

\textbf{Step 1: p-CME holds in strips surrounding $\partial \Omega_{\S^*}$.} First we deal with the fraction of $\Omega_{\S^*} \cap \C_r(\overline{\mbf{y}})$ lying close to $\partial \Omega_{\S^*}$. We will use Lemmas~\ref{lem:DDH} and \ref{lem:ur_implies_cme} to show that p-CME holds in small balls centered at $\partial \Omega_{\S^*}$, and these balls together create a neighborhood of $\partial \Omega_{\S^*}$ within $\Omega_{\S^*}$ (our so-called \textit{strips}). Thus, we will be left to show the p-CME estimates only above these strips. It will actually be beneficial that these strips are graphical, and we will enforce this during the construction.

\textbf{Substep 1.1: p-CME is true in small balls over the graph of $\psi_{\S^*}$.}
Fix any $I_i$ for $i \in \mathcal{I}$ from Proposition~\ref{prop:WHSA}. Using $k_0$ from Lemma~\ref{lem:DDH}, cover $I_i$ by dyadic cubes $\{I_{i,j}\}_{j \in \mathcal{J}_i}$ in $\R^n$ of sidelength $\ell(I_{i,j}) = 2^{-k_0}\ell(I_i)$. Note that, since $k_0 = k_0(n)$, it holds, for some $C_0 = C_0(n) > 0$ independent of $i$,
\begin{equation} \label{eq:amount_j}
	\abs{\mathcal{J}_i}
	\leq 
	C_0.
\end{equation} 
Moreover, for each $j \in \mathcal{J}_i$, by Lemma~\ref{lem:DDH}, there exists some regular Lip(1,1/2) function $\psi_{i, j}$ which agrees with $\psi_{\S^*}$ in $100nI_{i, j}$. Let us denote 
\begin{equation*}
	\Omega_{i, j} := \big\{(x_0, x, t) \in \R \times \R^{n-1} \times \R: x_0 > \psi_{i, j}(x, t)\big\}, 	
\end{equation*}
and $\mbf{x}_{i, j} := (\psi_{i, j}(x_{i, j}, t_{i, j}), x_{i, j}, t_{i, j}) \in \partial \Omega_{i, j}$, the point whose projection $\pi(\mbf{x}_{i, j}) = (x_{i, j}, t_{i, j})$ is the center of $I_{i, j}$. We note that, since $\psi_{i, j} = \psi_{\S^*}$ in $100nI_{i, j}$, it holds\footnote{
Given a Whitney box $I \subset \R^n = \R^{n-1} \times \R$, we define its dilation by a factor of $\lambda > 1$ as follows: $\lambda I := \{ (x, t) \in \R^{n-1} \times \R : \dist((x, t), I) < (\lambda - 1) \diam(I) \}$. In this way, it is clear that $C'_{30n\diam(I_{i,j})}(x_{i,j}, t_{i,j})$ definitely lies inside $100nI_{i,j}$.
}
\begin{equation*}
	\Omega_{i, j} \cap \mbf{C}_{30n\diam(I_{i, j})}(\mbf{x}_{i, j})
	=
	\Omega_{\S^*} \cap \mbf{C}_{30n\diam(I_{i, j})}(\mbf{x}_{i, j})
\end{equation*}
which implies
\begin{equation*}
	\delta_{\Omega_{i, j}}(\mbf{X})
	=
	\delta_{\Omega_{\S^*}}(\mbf{X}), 
	\qquad 
	\forall \, \mbf{X} \in \Omega_{\S^*} \cap \mbf{C}_{10n\diam(I_{i, j})}(\mbf{x}_{i, j}).
\end{equation*}
Indeed, let $\widehat{\mbf{x}}_{i, j} \in \pom_{i, j}$ satisfy $\norm{\mbf{X} - \widehat{\mbf{x}}_{i, j}} = \delta_{\Omega_{i, j}}(\mbf{X})$. Then the fact that $\mbf{x}_{i, j} \in \partial \Omega_{i, j}$ implies $\norm{\mbf{X} - \widehat{\mbf{x}}_{i, j}} \leq \norm{\mbf{X} - \mbf{x}_{i, j}} \leq 10n\diam(I_{i, j})$, and $\mbf{X} \in \C_{10n\diam(I_{i, j})}(\mbf{x}_{i, j})$ then yields $\widehat{\mbf{x}}_{i, j} \in \mbf{C}_{20n\diam(I_{i, j})}(\mbf{x}_{i, j})$. Inside this cylinder, $\Omega_{i, j}$ and $\Omega_{\S^*}$ coincide, so $\widehat{\mbf{x}}_{i, j} \in \pom_{\S^*}$, whence $\delta_{\Omega_{\S^*}}(\X) \leq \norm{\mbf{X} - \widehat{\mbf{x}}_{i, j}} = \delta_{\Omega_{i, j}}(\mbf{X})$. The $\geq$ is exactly the same.

Then, using Lemma~\ref{lem:ur_implies_cme} (because regular Lip(1,1/2) graphs are parabolic uniformly rectifiable, see Subsection~\ref{subsec:UR}) and also \eqref{eq:amount_j}, we deduce
\begin{multline} \label{eq:local_cme}
	\iiint_{\mbf{C}_{10n\diam(I_{i, j})}(\mbf{x}_{i, j}) \cap \Omega_{\S^*}} \abs{\nabla u}^2 \delta_{\Omega_{\S^*}}
	=
	\iiint_{\mbf{C}_{10n\diam(I_{i, j})}(\mbf{x}_{i, j}) \cap \Omega_{i, j}} \abs{\nabla u}^2 \delta_{\Omega_{i, j}}
	\\ \lesssim 
	\norm{u}_{L^\infty(\Omega_{\S^*})}^2 \diam(I_{i, j})^{n+1}
	\approx 
	\norm{u}_{L^\infty(\Omega_{\S^*})}^2 \ell(I_i)^{n+1}.
\end{multline} 


\textbf{Substep 1.2: strips covered by these small balls.}
The reader is encouraged to have Figure~\ref{fig:strips} handy while constructing the following objects. Let us now define, for $c_1 = c_1(n) > 0$, 
\begin{equation*}
	\Strip (I_i) 
	:=
	\left\{ (x_0, x, t) \in \R^{n+1} : \;  (x, t) \in I_i \; \text{ and } \; \psi_{\S^*}(x, t) < x_0 \leq  \psi_{\S^*}(x, t)+ c_1 2^{-k_0} \ell(I_i) \right\},
\end{equation*}
where we recall that $\psi_{\S^*}$ parametrizes $\pom_{\S^*}$ (Proposition~\ref{prop:WHSA}).
We claim that if $c_1$ is small,
\begin{equation} \label{eq:strip_1}
	\Strip (I_i) 
	\subset
	\bigcup_{j \in \mathcal{J}_i} \mbf{C}_{10n\diam(I_{i, j})}(\mbf{x}_{i, j}) \cap \Omega_{\S^*}.
\end{equation}
Indeed, take $(x_0, x, t) \in \Strip(I_i)$. Then $(x, t) \in I_i$, so $(x, t) \in I_{i, j}$ for some $j\in \mathcal{J}_i$. Therefore
\begin{multline*}
	\norm{(x_0, x, t) - \mbf{x}_{i, j}} 
	= 
	\norm{(x_0, x, t) - (\psi_{\S^*}(x_{i, j}), x_{i, j}, t_{i, j})} 
	\\ =
	\abs{x_0 - \psi_{\S^*}(x_{i, j})}
	+ \norm{(x, t) - (x_{i, j}, t_{i, j})} 
	\leq 
	c_1 2^{-k_0} \ell(I_i) + \diam(I_{i, j})
	\leq 
	10 n \diam(I_{i, j}).
\end{multline*} 
where in the last inequality we have just taken $c_1 = c_1(n)$ small enough. This shows \eqref{eq:strip_1}.

\begin{figure}[t!]
	\centering 
	\resizebox{0.75\textwidth}{!}{

	}
	\caption{Diagram of the construction of the graphs $\psi_{\S^*}$ (purple) and $\widetilde{\psi}_{\S^*}$ (green), the $\Strip(I_i)$ (hatched in red) and $\SStrip(I_i)$ (colored yellow) regions above each dyadic cube $I_i$ outside $\pi(F)$ (the set where $\psi_{\S^*}$ coincides with $\Sigma$), and $\widetilde{\Omega}_{\S^*}$ (colored green).}
	\label{fig:strips}
\end{figure}

To smoothen the shape of these strips, let us define, for small enough $c_2 > 0$ depending on $n, k_0$ and $c_1$ (so ultimately only on $n$),
\begin{equation*}
	\SStrip(I_i) 
	:=
	\Big\{ (x_0, x, t) \in \R^{n+1} : \; (x, t) \in I_i \; \text{ and } \; \psi_{\S^*}(x, t) < x_0 \leq \psi_{\S^*}(x, t) + c_2 D(x, t) \Big\},
\end{equation*}
so that it holds, simply because $D(x, t) \lesssim \diam(I_i)$ for $(x, t) \in I_i$ by \eqref{eq:diam_I_i},
\begin{equation} \label{eq:strip_2}
	\SStrip(I_i) 
	\subset 
	\Strip(I_i).
\end{equation} 
Therefore, if we define another graph by 
\begin{equation*}
	\widetilde{\psi}_{\S^*}(x, t) 
	:=
	\psi_{\S^*}(x, t) + c_2 D(x, t), 
	\qquad 
	(x, t) \in \R^{n-1} \times \R,
\end{equation*}
it turns out that $\widetilde{\psi}_{\S^*}$ is a Lip(1,1/2) graph because so is $\psi_{\S^*}$ by \eqref{eq:psi_lipschitz}, and $D$ by definition as a stopping-time distance. Define the domain above this graph by
\begin{equation*} 
	\widetilde{\Omega}_{\S^*}
	:=
	\big\{(x_0, x, t) \in \R \times \R^{n-1} \times \R: \;  x_0 > \widetilde{\psi}_{\S^*}(x, t) \big\}.
\end{equation*}
Then, it clearly holds
\begin{equation*}
	\Omega_{\S^*} 
	=
	\widetilde{\Omega}_{\S^*}
	\cup 
	\bigcup_{i \in \mathcal{I}} \SStrip(I_i).
\end{equation*} 
In fact, a localized version of the same decomposition readily holds inside $\C_r(\overline{\mbf{y}})$, the key point being that we may restrict indices to $\Lambda$ (from \eqref{eq:def_Lambda}) as a consequence of \eqref{eq:assumption_local}:
\begin{equation} \label{eq:above_below}
	\Omega_{\S^*} \cap \C_r(\overline{\mbf{y}})
	=
	\left( \widetilde{\Omega}_{\S^*}
	\cup 
	\bigcup_{i \in \Lambda} \SStrip(I_i) \right) \cap \C_r(\overline{\mbf{y}}).
\end{equation} 

As a result of the local p-CME estimate that we got in \eqref{eq:local_cme}, we obtain a p-CME estimate in the region covered by these strips:
\begin{multline} \label{eq:union_strips}
	\iiint_{\bigcup\limits_{i \in \Lambda} \SStrip(I_i) \cap \mbf{C}_r(\overline{\mbf{y}})} \abs{\nabla u}^2 \delta_{\Omega_{\S^*}} 
	\leq 
	\!\!\!\!
	\sum_{\substack{i \in \Lambda \\ \SStrip(I_i) \cap \mbf{C}_r(\overline{\mbf{y}}) \neq \emptyset}} 
	\!\!\!\!
	\iiint_{\SStrip(I_i)} \abs{\nabla u}^2 \delta_{\Omega_{\S^*}} 
	\\ \leq 
	\!\!\!\!
	\sum_{\substack{i \in \Lambda \\ \SStrip(I_i) \cap \mbf{C}_r(\overline{\mbf{y}}) \neq \emptyset}}
	\sum_{j \in \mathcal{J}_i} 
	\iiint_{\mbf{C}_{10n\diam(I_{i, j})}(\mbf{x}_{i, j}) \cap \Omega_{\S^*}} \abs{\nabla u}^2 \delta_{\Omega_{\S^*}} 
	\\ \lesssim 
	\norm{u}_{L^\infty(\Omega_{\S^*})}^2 \!\!\!\! \sum_{\substack{i \in \Lambda \\ \SStrip(I_i) \cap \mbf{C}_r(\overline{\mbf{y}}) \neq \emptyset}} \!\!\!\! \ell(I_i)^{n+1},
\end{multline}
where we have also used \eqref{eq:strip_1} and \eqref{eq:strip_2} in the second inequality, and \eqref{eq:amount_j} in the last one.


\textbf{Substep 1.3: pasting the strips.}
As already mentioned when obtaining \eqref{eq:above_below}, as a consequence of our localization assumption \eqref{eq:assumption_local}, if $(x_0, x, t) \in \SStrip(I_i) \cap \mbf{C}_r(\overline{\mbf{y}})$, then $(x, t) \in I_i$ for some $i \in \Lambda$. Moreover, since $(x_0, x, t) \in \C_r(\mbf{\overline{y}})$,
\begin{equation*}
	\abs{(x, t) - (\overline{y}, \overline{s})} \leq \norm{(x_0, x, t) - (\psi_{\S^*}(\overline{y}, \overline{s}), \overline{y}, \overline{s})} < r,
\end{equation*}
whence $\abs{D(x, t) - D(\overline{y}, \overline{s})} < r$ because $D \in \Lip(1,1/2)$ with constant 1. Therefore, since $D(\overline{y}, \overline{s}) \leq r/\mu$ (this is the assumption in Case 1) for $\mu$ small, we have that $D(x, t) \leq 2r/\mu$. Thus, \eqref{eq:diam_I_i} yields $\diam(I_i) \lesssim r$, from which it follows that 
\begin{equation*}
	I_i \subset C'_{M_1r}(\overline{y}, \overline{s})
\end{equation*}
for $M_1 > 0$ depending on $\mu$; but we will omit this dependence because $\mu = \mu(n)$. 

Therefore, noting also that $\SStrip(I_i)$ are disjoint because so are their projections $I_i$,
\begin{multline} \label{eq:disjoint_strips}
	\sum_{\substack{i \in \Lambda \\ \SStrip(I_i) \cap \mbf{C}_r(\overline{\mbf{y}}) \neq \emptyset}} \ell(I_i)^{n+1}
	\approx 
	\sum_{\substack{i \in \Lambda \\ \SStrip(I_i) \cap \mbf{C}_r(\overline{\mbf{y}}) \neq \emptyset}} \mathcal{H}^{n+1}_\parab(I_i)
	\\ =
	\mathcal{H}^{n+1}_\parab \bigg( \bigcup_{\substack{i \in \Lambda \\ \SStrip(I_i) \cap \mbf{C}_r(\overline{\mbf{y}}) \neq \emptyset}} I_i \bigg)
	\leq 
	\mathcal{H}^{n+1}_\parab (C'_{M_1r}(\overline{y}, \overline{s}))
	\lesssim 
	r^{n+1}.
\end{multline}
In this computation, $\mathcal{H}^{n+1}_\parab$ is the Hausdorff measure in space-time $\R^{n-1} \times \R$ (i.e. removing the first coordinate from the whole space-time), as explained in Subsection~\ref{subsec:geometry}. Although $\mathcal{H}^{n+1}_\parab$ is a measure on a space of $n$ dimensions, its homogeneity is $n+1$ because time is one of the variables involved (recall the inhomogeneous metric that we use in space-time, see Subsection~\ref{subsec:geometry}).

Hence, recalling \eqref{eq:union_strips}, the CME estimate holds in the union of the strips, that is, below the graph of $\widetilde{\psi}_{\S^*}$. Therefore taking into account \eqref{eq:above_below}, to obtain \eqref{goal:cme}, we are left to prove
\begin{equation} \label{goal:above_sawtooth}
	\iiint_{\widetilde{\Omega}_{\S^*} \cap \mbf{C}_r(\overline{\mbf{y}})} \abs{\nabla u}^2 \delta_{\Omega_{\S^*}} 
	\lesssim 
	\norm{u}_{L^\infty(\Omega_{\S^*})}^2 r^{n+1}.
\end{equation}


\textbf{Step 2: CME holds above the graph $\widetilde{\psi}_{\S^*}$, via integration by parts.} We are left to show \eqref{goal:above_sawtooth}, so we only need to work above the graph $\widetilde{\psi}_{\S^*}$ constructed in Step 1. The advantage is that here, the Green's function enjoys desirable properties because $\widetilde{\Omega}_{\S^*}$ contains Whitney regions associated to good cubes in the corona decompositions. Concretely, $\delta_{\Omega_{\S^*}} \lesssim \widehat{u}$ will be true because of Proposition~\ref{prop:WHSA}, which substituted back in \eqref{goal:above_sawtooth} will allow us to carry out an integration by parts, and later finish using the upper bounds from Lemma~\ref{lem:u_upper_bounds}. On the way, one needs to be careful when comparing $\delta_\Omega$ and $\delta_{\Omega_{\S^*}}$, because $u$ is a solution in $\Omega_{\S^*}$, but $\widehat{u}$ is an (adjoint) solution in $\Omega$.

Recall that by \eqref{eq:assumption_local}, we have $r \leq \frac12 \kappa R_{\S^*}$, where $\kappa$ and $R_{\S^*}$ are defined in Proposition~\ref{prop:WHSA}.


\textbf{Substep 2.1: $\delta_{\Omega_{\S^*}} \lesssim_\varepsilon \widehat{u}$ in $\widetilde{\Omega}_{\S^*} \cap \mbf{C}_r(\overline{\mbf{y}})$.} We start by claiming that, with $\Omega'_{\S^*}$ from \eqref{def:subdomain_truncated},
\begin{equation} \label{claim:contained}
	(x_0, x, t) \in \widetilde{\Omega}_{\S^*} \cap \mbf{C}_r(\overline{\mbf{y}})
	\quad \implies \quad 
	(x_0, x, t) \in \Omega'_{\S^*}.
\end{equation}
Indeed, by \eqref{eq:assumption_local}, $(x, t) \in C'_r (\overline{y}, \overline{s}) \subset C'_{2\kappa R_{\S^*}}(x_{Q(\S^*)}, t_{Q(\S^*)})$. Therefore, if we pick any point $(z, \tau) \in \partial C'_{8\kappa R_{\S^*}}(x_{Q(\S^*)}, t_{Q(\S^*)})$, it turns out that \eqref{eq:psi_lipschitz} and \eqref{eq:psi_zero_far} imply 
\begin{multline} \label{eq:psi_not_huge}
	\psi_{\S^*}(x, t)
	=
	\psi_{\S^*}(x, t) - \psi_{\S^*}(z, \tau)
	\leq 
	C_{\mathrm{LA}} \varepsilon^{1/2} \abs{(x, t) - (z, \tau)}
	\\ \leq 
	C_{\mathrm{LA}} \varepsilon^{1/2} \left( \abs{(x, t) - (x_{Q(\S^*)}, t_{Q(\S^*)})} + \abs{(x_{Q(\S^*)}, t_{Q(\S^*)}) - (z, \tau)} \right)
	\\ \leq 
	C_{\mathrm{LA}} \varepsilon^{1/2} 10 \kappa R_{\S^*}
	=
	10 C_{\mathrm{LA}}^2 K_0 \, \varepsilon^{1/2} \diam(Q(\S^*))
	\leq 
	\varepsilon^{1/4} \ell(Q(\S^*))
\end{multline} 
because $\varepsilon$ was chosen small enough (recall \eqref{eq:epsilon_K_0}). This readily implies \eqref{claim:contained}.

Thus, if $(x_0, x, t) \in \widetilde{\Omega}_{\S^*} \cap \mbf{C}_r(\overline{\mbf{y}})$, \eqref{eq:containment} and \eqref{claim:contained} imply that $(x_0, x, t) \in \widetilde{\mbf{U}}_Q^\good$ for some $Q \in \S^*$. Therefore, \eqref{eq:upper_bound_u_enlarged} implies that, for $\widehat{u}$ defined in \eqref{eq:def_hatu},
\begin{equation*}
	\delta_\Omega(x_0, x, t) \lesssim_\varepsilon \widehat{u} (x_0, x, t).
\end{equation*} 

To finish this substep, we claim that it is also true that, if $(x_0, x, t) \in \widetilde{\Omega}_{\S^*} \cap \mbf{C}_r(\overline{\mbf{y}})$, then
\begin{equation} \label{eq:dist_leq_dist}
	\delta_{\Omega_{\S^*}}(x_0, x, t) \leq \delta_\Omega(x_0, x, t),
\end{equation}
so that, putting together these last two estimates, to show \eqref{goal:above_sawtooth} it is enough to verify
\begin{equation} \label{goal:int_by_parts_2}
	\iiint_{\widetilde{\Omega}_{\S^*} \cap \mbf{C}_r(\overline{\mbf{y}})} \abs{\nabla u}^2 \widehat{u}
	\lesssim 
	\norm{u}_{L^\infty(\Omega_{\S^*})}^2 r^{n+1}.
\end{equation}

Let us prove \eqref{eq:dist_leq_dist}. 
By \eqref{eq:containment}, we have $\Omega'_{\S^*} \subset \Omega$, so $\delta_{\Omega'_{\S^*}}(x_0, x, t) \leq \delta_\Omega(x_0, x, t)$. Now we will show that $\delta_{\Omega'_{\S^*}}(x_0, x, t) = \delta_{\Omega_{\S^*}}(x_0, x, t)$. For that purpose, denote $(z_0, z, \tau) \in \partial \Omega'_{\S^*}$ a point for which $\delta_{\Omega'_{\S^*}}(x_0, x, t) = \norm{(x_0, x, t) - (z_0, z, \tau)}$. 

First, since $(\psi_{\S^*}(\overline{y}, \overline{s}), \overline{y}, \overline{s}) \in \pom_{\S^*} \not\subset \Omega'_{\S^*}$ and $(x_0, x, t) \in \widetilde{\Omega}_{\S^*} \cap \C_r(\overline{\mbf{y}})$, it definitely holds $\delta_{\Omega'_{\S^*}}(x_0, x, t) \leq \norm{(x_0, x, t) - (\psi_{\S^*}(\overline{y}, \overline{s}), \overline{y}, \overline{s})} < r$. Thus $(z_0, z, \tau) \in \C_{2r}(\psi_{\S^*}(\overline{y}, \overline{s}), \overline{y}, \overline{s})$ simply by the triangle inequality, whence $(z, \tau) \in C'_{2r}(\overline{y}, \overline{s}) \subset C'_{\kappa R_{\S^*}}(x_{Q(\S^*)}, t_{Q(\S^*)})$ by \eqref{eq:assumption_local}. Moreover, as in \eqref{eq:psi_not_huge}, since $\varepsilon$ is small, it holds $\psi_{\S^*}(\overline{y}, \overline{s}) \leq \frac12 \varepsilon^{-1/4} \ell(Q(\S^*))$,
so we have (recalling again our localization assumption \eqref{eq:assumption_local})
\begin{multline*}
	z_0 
	\leq 
	\psi_{\S^*}(\overline{y}, \overline{s}) +	\norm{(z_0, z, \tau) - (\psi_{\S^*}(\overline{y}, \overline{s}), \overline{y}, \overline{s})}
	\\ \leq 
	\frac12 \varepsilon^{-1/4}\ell(Q(\S^*)) + 2r
	\leq 
	\frac12 \varepsilon^{-1/4}\ell(Q(\S^*)) + \kappa R_{\S^*}
	< 
	\varepsilon^{-1/4} \ell(Q(\S^*)).
\end{multline*}
This shows that the truncation from $\Omega_{\S^*}$ to $\Omega'_{\S^*}$ does not affect $(z_0, z, \tau)$, i.e., $(z_0, z, \tau) \in \partial \Omega_{\S^*}$. This confirms that $\delta_{\Omega'_{\S^*}}(x_0, x, t) = \delta_{\Omega_{\S^*}}(x_0, x, t)$ and finishes the proof of \eqref{eq:dist_leq_dist}.


\textbf{Substep 2.2: cut-off function adapted to the region.}
We will show \eqref{goal:int_by_parts_2} by an integration by parts argument. We will localize the estimate to the region $\widetilde{\Omega}_{\S^*} \cap \mbf{C}_r(\overline{\mbf{y}})$ by choosing an appropriate cutoff function.

Indeed, using a standard argument,\footnote{
	Since $\Omega_{\S^*}$ is a graphical domain, this can be achieved with an elementary argument using a Whitney decomposition and partitions of unity, like in \cite[Subsection 5.3]{BHMN_graph}. Indeed, one considers Whitney boxes $\mbf{I}$ that intersect $\widetilde{\Omega}_{\S^*} \cap \mbf{C}_r(\overline{\mbf{y}})$, and a dilation parameter $\vartheta > 0$ small enough so that $(1+\vartheta) \mbf{I}$ fit inside $\widetilde{\Omega}_{\S^*} \cap \mbf{C}_{2r}(\overline{\mbf{y}})$. Then, given $N \in \N$, one discards the boxes which are smaller than $1/N$ to stay away from the boundary $\partial \widetilde{\Omega}_{\S^*}$, and still cover the whole domain as $N \to \infty$. Using a partition of unity associated to this family of Whitney boxes, it is easy to construct the desired cutoff $\Psi_N$ by combining cutoffs $\Psi_{\mbf{I}}$ associated to each box $\mbf{I}$. The fact that the estimate \eqref{eq:cutoff} holds is intuitive from the (parabolic) scaling of the cutoffs $\Psi_{\mbf{I}}$, and that the only regions in which $\Psi_N$ is non-constant are the Whitney boxes which are close to the boundary of $\supp \Psi_N$, which somehow covers the ``perimeter'' of this region (hence the homogeneity $n+1$ in \eqref{eq:cutoff}, which is codimension 1 in parabolic $\R^{n+1}$). For more details, see \cite[Subsection 5.3]{BHMN_graph}. Similar constructions in more challenging geometrical frameworks can be found in detail in \cite[Lemma 6.69]{BHMN} or \cite[Appendix A]{BFHH}. 
} we can find, for every $N \in \N$, some cut-off function $\Psi_N \in C^\infty_c(\widetilde{\Omega}_{\S^*} \cap \mbf{C}_{2r}(\overline{\mbf{y}}))$ which approximates the domain of integration in the sense that
\begin{equation*} 
	0 \leq \Psi_N \leq 1,
	\qquad 
	\Psi_N \text{ is increasing in } N,
	\qquad 
	\lim_{N \to \infty} \Psi_N(\mbf{X}) = 1  \quad\; \forall \, \mbf{X} \in \widetilde{\Omega}_{\S^*} \cap \mbf{C}_r(\overline{\mbf{y}}),
\end{equation*}
such that the following estimate holds\footnote{
	The fact that we are able to also control second derivatives of $\Psi_N$ in \eqref{eq:cutoff}, which is not needed in \cite[Subsection 5.3]{BHMN_graph}, is easy to check just from the elementary construction of the $\Psi_{\mbf{I}}$ with the appropriate parabolic scaling for them to be associated to the box $\mbf{I}$.
}
\begin{equation} \label{eq:cutoff}
	\iiint_{\widetilde{\Omega}_{\S^*}} \left( \abs{\nabla \Psi_N} + \abs{\nabla^2 \Psi_N} \delta_{\Omega_{\S^*}} + \abs{\partial_t \Psi_N} \delta_{\Omega_{\S^*}} \right) 
	\lesssim 
	r^{n+1}.
\end{equation}

With this in mind, by monotone convergence, \eqref{goal:int_by_parts_2} reduces to showing
\begin{equation} \label{goal:int_by_parts_3}
	\iiint_{\widetilde{\Omega}_{\S^*}} \abs{\nabla u}^2 \widehat{u} \,\Psi_N
	\lesssim 
	\norm{u}_{L^\infty(\Omega_{\S^*})}^2 r^{n+1},
\end{equation}
with an implicit constant that is independent of $N$.


\textbf{Substep 2.3: integration by parts (part I).}
Since $u$ is caloric, we can compute
\begin{equation*}
	H(u^2) = 2 u \, \partial_t u - 2u \Delta u - 2 \abs{\nabla u}^2 = - 2 \abs{\nabla u}^2, 
\end{equation*} 
which allows us to integrate by parts using $\widehat{u} \Psi_N, \partial_{x_i} (\widehat{u} \Psi_N) \in C^\infty_c(\widetilde{\Omega}_{\S^*} \cap \mbf{C}_{2r}(\overline{\mbf{y}})) \subset C^\infty_c(\Omega'_{\S^*})$:\footnote{The set containment follows by a simple modification of \eqref{claim:contained} to allow for $\mbf{C}_{2r}(\overline{\mbf{y}})$ instead of $\mbf{C}_r(\overline{\mbf{y}})$.} 
\begin{align*}
	\iiint_{\widetilde{\Omega}_{\S^*}} \abs{\nabla u}^2 \widehat{u} \, \Psi_N
	=
	-\frac12 \iiint_{\widetilde{\Omega}_{\S^*}} H(u^2) \, \widehat{u} \, \Psi_N 
	=
	\frac12 \iiint_{\widetilde{\Omega}_{\S^*}} u^2 H^* (\widehat{u} \Psi_N),
\end{align*}
where $H^* := \partial_t + \Delta$ is the adjoint heat operator.
Moreover, the pole $(Y_\S, s_\S)$ is far from our region of integration\footnote{
	This is an elementary consequence of our constructions and the choice of $\varepsilon$ very small. For more details, see the computations in the Appendix.
}, so it holds $H^*\widehat{u} = 0$ in $\Omega'_{\S^*}$. Therefore
\begin{align} \label{eq:ibp_part1}
	\iiint_{\widetilde{\Omega}_{\S^*}} u^2 H^* (\widehat{u} \Psi_N)
	& =
	\iiint_{\widetilde{\Omega}_{\S^*}} u^2 \, \partial_t \widehat{u} \, \Psi_N
	+ \iiint_{\widetilde{\Omega}_{\S^*}} u^2 \, \widehat{u} \, \partial_t \Psi_N
	\nonumber
	\\ & \qquad + \iiint_{\widetilde{\Omega}_{\S^*}} u^2 \Delta \widehat{u} \, \Psi_N
	+ 2 \iiint_{\widetilde{\Omega}_{\S^*}} u^2 \, \nabla \widehat{u} \cdot \nabla \Psi_N
	+ \iiint_{\widetilde{\Omega}_{\S^*}} u^2 \, \widehat{u} \, \Delta \Psi_N
	\nonumber
	\\ & =
	\iiint_{\widetilde{\Omega}_{\S^*}} u^2 \, \widehat{u} \, \partial_t \Psi_N
	+ 2 \iiint_{\widetilde{\Omega}_{\S^*}} u^2 \, \nabla \widehat{u} \cdot \nabla \Psi_N
	+ \iiint_{\widetilde{\Omega}_{\S^*}} u^2 \, \widehat{u} \, \Delta \Psi_N
	\\ & \lesssim_\varepsilon
	\norm{u}_{L^\infty(\Omega_{\S^*})}^2 
	\left( 
	\iiint_{\widetilde{\Omega}_{\S^*}} \delta_\Omega \abs{\partial_t \Psi_N}
	+ \iiint_{\widetilde{\Omega}_{\S^*}} \abs{\nabla \Psi_N}
	+ \iiint_{\widetilde{\Omega}_{\S^*}} \delta_\Omega \abs{\nabla^2 \Psi_N}
	\right) 
	\nonumber
\end{align}
where in the last step we have used Lemma~\ref{lem:u_upper_bounds}, because by choice of $\Psi_N$, \eqref{claim:contained} and \eqref{eq:containment},
\begin{equation*}
	\supp \Psi_N 
	\subset 
	\widetilde{\Omega}_{\S^*} \cap \mbf{C}_{2r}(\overline{\mbf{y}}) 
	\subset 
	\Omega'_{\S^*} \subset \bigcup_{Q \in \S^*} \widetilde{\mbf{U}}_Q^\good.
\end{equation*}


\textbf{Substep 2.4: $\delta_\Omega \lesssim \delta_{\Omega_{\S^*}}$ in $\widetilde{\Omega}_{\S^*}$.}\footnote{This step is actually the place where we are required to stay away from $\Gamma_{\S^*} = \pom_{\S^*}$, hence the need to create the second graph via $\widetilde{\psi}_{\S^*}$ in Step 1.2.} 
Pick $(x_0, x, t) \in \widetilde{\Omega}_{\S^*}$. Then, by definition of $\widetilde{\Omega}_{\S^*}$, 
\begin{equation*}
	\rho 
	:=
	x_0 - \psi_{\S^*}(x, t)
	>
	\widetilde{\psi}_{\S^*}(x, t) - \psi_{\S^*}(x, t)
	= 
	c_2 D(x, t).
\end{equation*}

Let us first consider the case that $D(x, t) \neq 0$. Then, $(x, t) \notin \pi(F)$, so $(x, t) \in I_i$ for some $i \in \Lambda$ (check Proposition~\ref{prop:WHSA} and \eqref{eq:assumption_local}). Then the inequality in the last display tells us that $\diam(I_i) \lesssim \rho$ by \eqref{eq:diam_I_i}, so that \eqref{eq:approx_I_i} yields
\begin{multline*}
	\delta_\Omega(x_0, x, t) 
	\leq 
	\delta_\Omega (\psi_{\S^*}(x, t), x, t) + \norm{(x_0, x, t) - (\psi_{\S^*}(x, t), x, t)}
	\\ \leq 
	\varepsilon^{1/2} \diam(I_i) + \abs{x_0 - \psi_{\S^*}(x, t)}
	\leq
	2\rho
\end{multline*}
because $\varepsilon$ is small. Now, since $\Gamma_{\S^*}$ is a Lip(1,1/2) graph with Lipschitz constant smaller than 1/2 (actually, see \eqref{eq:psi_lipschitz}), it is elementary that we can measure, up to constants, the distance to it by just moving in the vertical direction $x_0$ (see \cite[Lemma 3.2]{BHHLN}), so 
\begin{equation*}
	\delta_\Omega(x_0, x, t) 
	\leq
	2\rho
	=
	2 \big(x_0 - \psi_{\S^*}(x, t)\big)
	\leq 
	4 \dist\big((x_0, x, t), \Gamma_{\S^*}\big)
	=
	4 \delta_{\Omega_{\S^*}}(x_0, x, t).
\end{equation*}

In the case that $D(x, t) = 0$, then $(\psi_{\S^*}(x, t), x, t) \in \pom$ (see Proposition~\ref{prop:WHSA}). This allows to compute, again measuring distances moving in the vertical direction, that
\begin{equation*}
	\delta_\Omega(x_0, x, t) 
	\leq 
	\abs{(x_0, x, t) - (\psi_{\S^*}(x, t), x, t)}
	=
	\abs{x_0 - \psi_{\S^*}(x, t)}
	\leq
	2 \delta_{\Omega_{\S^*}}(x_0, x, t).
\end{equation*}

Thus, $\delta_\Omega \leq 4 \delta_{\Omega_{\S^*}}$ is true in $\widetilde{\Omega}_{\S^*}$.


\textbf{Substep 2.5: integration by parts (part II).}
Now we can finish the computation started in Step 2.3. Indeed, continuing \eqref{eq:ibp_part1} with the result of Step 2.4 and \eqref{eq:cutoff}, we obtain
\begin{align*}
	\iiint_{\widetilde{\Omega}_{\S^*}} u^2 H^* (\widehat{u} \Psi_N)
	& \lesssim_\varepsilon
	\norm{u}_{L^\infty(\Omega_{\S^*})}^2 
	\left( 
	\iiint_{\widetilde{\Omega}_{\S^*}} \delta_{\Omega_{\S^*}} \abs{\partial_t \Psi_N}
	+ \iiint_{\widetilde{\Omega}_{\S^*}} \abs{\nabla \Psi_N}
	+ \iiint_{\widetilde{\Omega}_{\S^*}} \delta_{\Omega_{\S^*}} \abs{\nabla^2 \Psi_N}
	\right) 
	\\ & \lesssim 
	\norm{u}_{L^\infty(\Omega_{\S^*})}^2 
	r^{n+1},
\end{align*}
This establishes \eqref{goal:int_by_parts_3}, hence finishing the proof of \eqref{goal:above_sawtooth}. Overall, this finishes the proof of Theorem~\ref{th:push_cme} under the additional assumption \eqref{eq:assumption_local} in Case 1.


\subsection{Modifications for Case 2} \label{subsec:case_2} 

In Case 2, namely when $r < \mu D(\overline{y}, \overline{s})$, we can even simplify the arguments because $r$ is so small that it suffices to consider the strips to finish the problem (there is no need for the Step 2 from the previous subsection, where we got the estimate \eqref{goal:above_sawtooth} above the sawtooth via integration by parts). However, we need to refine our argument before because $r$ can be very tiny compared to $\ell(I_i)$, so we should rather consider scales comparable to $r$ instead of $\ell(I_i)$. Let us do this more precisely.

Indeed, in this case 2, find $i_0$ for which $(\overline{y}, \overline{s}) \in I_{i_0}$, so $i_0 \in \Lambda$ because of \eqref{eq:assumption_local}. Consider now the dyadic subcube $I_{i_0}' \subset I_{i_0}$ for which $(\overline{y}, \overline{s}) \in I_{i_0}'$ and $\ell(I_{i_0}') \in (r, 2r]$. This will be the correct scale to work at. 

Consider the following family of dyadic cubes in $\R^n$:
\begin{equation*}
	\mathcal{D}_{i_0}
	:=
	\left\{ I' \text{ dyadic cube in $\R^n$} : 
	\begin{array}{c}
		\ell(I') = \ell(I'_{i_0}), \quad I' \subset 4nI_{i_0}',
		\\  I' \subset I_i \text{ for some $I_i$ with $i \in \Lambda$}
	\end{array}  
	\right\}.
\end{equation*}
Then we claim that
\begin{equation} \label{eq:boxes_cover}
	C'_r(\overline{y}, \overline{s}) 
	\subset 
	\bigcup_{I' \in \mathcal{D}_{i_0}} I' 
	\subset 
	4n I_{i_0}' 
	\subset 
	2I_{i_0}.
\end{equation} 
Indeed, for the first inclusion, note that any $(x, t) \in C'_r(\overline{y}, \overline{s})$ satisfies $\abs{(x, t) - (\overline{y}, \overline{s})} < r$, whence $|(x, t) - (x_{I_{i_0}'}, t_{I_{i_0}'})| < r + \diam(I_{i_0}') \leq \ell(I_{i_0}') + \diam(I_{i_0}') \leq 2n \diam(I_{i_0}')$ because $(\overline{y}, \overline{s}) \in I_{i_0}'$. Now pick the dyadic cube $I'$ in the generation of $I_{i_0}'$ for which $(x, t) \in I'$. The previous computation shows that $I' \cap 2nI_{i_0}' \neq \emptyset$. Since $\ell(I') = \ell(I_{i_0}')$, we have $I' \subset 4nI_{i_0}'$. Moreover, \eqref{eq:assumption_local} easily implies that $I' \subset I_i$ for some $i \in \Lambda$. Altogether, this means that $I' \in \mathcal{D}_{i_0}$.
In turn, the second inclusion follows from definition of $\mathcal{D}_{i_0}$. The last inclusion is true because $I_{i_0}' \subset I_{i_0}$ and 
\begin{equation*}
	\diam(I_{i_0}') \leq n \ell(I_{i_0}') \leq 2nr \leq 2n\mu D(\overline{y}, \overline{s}) \leq 120n\mu\diam(I_{i_0})
\end{equation*}
(here we used the fact that we are in Case 2, and \eqref{eq:diam_I_i} because $(\overline{y}, \overline{s}) \in I_{i_0}$), which imply that 
\begin{align*}
	4nI_{i_0}'
	& =
	\big\{ (x, t) \in \R^n : \dist((x, t), I_{i_0}') \leq (4n-1) \diam(I_{i_0}') \big\}
	\\ & \subset 
	\big\{ (x, t) \in \R^n : \dist((x, t), I_{i_0}) \leq 4n \cdot 120n\mu \diam(I_{i_0}) \big\} 
	\subset 
	2I_{i_0}
\end{align*} 
simply choosing $\mu$ small enough only depending on $n$ (indeed, $480n^2\mu \leq 1$ suffices).

Now that \eqref{eq:boxes_cover} is clear, take each $I' \in \mathcal{D}_{i_0}$ and run a similar argument as in Step 1.1 before to obtain a covering by $\{I_j'\}_j$ using Lemma~\ref{lem:DDH}, for which \eqref{eq:local_cme} holds. To finish the proof, it suffices to show that in this case, with the definitions of Step 1.2 before, we have
\begin{equation} \label{eq:strips_cover}
	\mbf{C}_r(\overline{\mbf{y}}) \cap \Omega_{\S^*} \, \subset \, \bigcup_{I' \in \mathcal{D}_{i_0}} \SStrip(I'),
\end{equation}
and then, with \eqref{eq:union_strips}, \eqref{eq:disjoint_strips} and \eqref{eq:boxes_cover} we obtain 
\begin{multline*}
	\iiint_{\mbf{C}_r(\overline{\mbf{y}}) \cap \Omega_{\S^*}} \abs{\nabla u}^2 \delta_{\Omega_{\S^*}} 
	\leq 
	\iiint_{\bigcup\limits_{I' \in \mathcal{D}_{i_0}} \SStrip(I')} \abs{\nabla u}^2 \delta_{\Omega_{\S^*}} 
	\lesssim 
	\norm{u}_{L^\infty(\Omega_{\S^*})}^2 \sum_{I' \in \mathcal{D}_{i_0}} \ell(I')^{n+1}
	\\ \approx 
	\norm{u}_{L^\infty(\Omega_{\S^*})}^2\mathcal{H}^{n+1}_\parab \Big( \bigcup_{I' \in \mathcal{D}_{i_0}} I' \Big)
	\leq 
	\norm{u}_{L^\infty(\Omega_{\S^*})}^2 \mathcal{H}^{n+1}_\parab (4nI_{i_0}')
	\lesssim 
	\norm{u}_{L^\infty(\Omega_{\S^*})}^2 r^{n+1}.
\end{multline*} 

Therefore, let us finish with the proof of \eqref{eq:strips_cover}. Pick $(x_0, x, t) \in \mbf{C}_r(\overline{\mbf{y}}) \cap \Omega_{\S^*}$, so that clearly $\abs{(x, t) - (\overline{y}, \overline{s})} < r$. Then, by the first inclusion in \eqref{eq:boxes_cover}, $(x, t) \in I'$ for some $I' \in \mathcal{D}_{i_0}$. 
Then we can compute, since $\norm{D}_{\Lip(1,1/2)} \leq 1$ by definition as a stopping-time distance,
\begin{multline*}
	D(\overline{y}, \overline{s})
	\leq 
	\abs{D(\overline{y}, \overline{s}) - D(x, t)} + D(x, t) 
	\leq 
	\norm{(\overline{y}, \overline{s}) - (x, t)} + D(x, t)
	\\ \leq 
	r + D(x, t)
	< 
	\mu D(\overline{y}, \overline{s}) + D(x, t),
\end{multline*}
and since $\mu < 1/2$ we can hide the first term in the right hand side to obtain
\begin{equation*} 
	D(\overline{y}, \overline{s}) \leq 2 D(x, t).
\end{equation*} 
This allows us to estimate
\begin{align*}
	x_0 
	& < 
	\psi_{\S^*}(\overline{y}, \overline{s}) + r
	\hspace{4.65cm}
	\text{(because $(x_0, x, t) \in \mbf{C}_r(\overline{\mbf{y}})$)}
	\\ & \leq 
	\psi_{\S^*}(\overline{y}, \overline{s}) + \mu D(\overline{y}, \overline{s})
	\hspace{3.45cm}
	\text{(by the hypothesis of Case 2)} 
	\\ & \leq 
	\psi_{\S^*}(x, t) + \abs{\psi_{\S^*}(\overline{y}, \overline{s}) - \psi_{\S^*}(x, t)} + \mu D(x, t) + \mu \abs{D(\overline{y}, \overline{s}) - D(x, t)} 
	\\ & \leq 
	\psi_{\S^*}(x, t) + \norm{(\overline{y}, \overline{s}) - (x, t)} + \mu D(x, t) + \norm{(\overline{y}, \overline{s}) - (x, t)}
	\\ & 
	\hspace{1cm}
	\text{(because $\norm{\psi_{\S^*}}_{\Lip(1,1/2)} \leq 1$ recalling \eqref{eq:psi_lipschitz}, and $\norm{D}_{\Lip(1,1/2)} \leq 1$)}
	\\ & \leq 
	\psi_{\S^*}(x, t) + \mu D(x, t) + 2r
	\hspace{2.75cm} 
	\text{(because $(x_0, x, t) \in \mbf{C}_r(\overline{\mbf{y}})$)}
	\\ & \leq 
	\psi_{\S^*}(x, t) + \mu D(x, t) + 2 \mu D(\overline{y}, \overline{s})
	\hspace{1.5cm} 
	\text{(by the hypothesis of Case 2)}
	\\ & \leq 
	\psi_{\S^*}(x, t) + 5\mu D(x, t)
	\hspace{3.35cm} 
	\text{(by the last display)}
	\\ & \leq 
	\psi_{\S^*}(x, t) + c_2 D(x, t)
	\hspace{3.35cm} 
	\text{ (by choosing $\mu \leq c_2 / 5$)}.
\end{align*}
This finishes the proof of \eqref{eq:strips_cover}, hence of Theorem~\ref{th:push_cme} under the assumption \eqref{eq:assumption_local} in Case 2.


\subsection{Removing the extra hypothesis} \label{subsec:gral_case}

Lastly, now that we have understood the core of the proof in the two previous subsections, let us remove the hypothesis \eqref{eq:assumption_local}. The equation \eqref{eq:psi_zero_far} says that the function $\psi_{\S^*}$ is zero outside a large cylinder, so this implies that we have already considered the most relevant cases. However, some small modifications are needed to cover the case when $r$ is extremely large, and concretely covers a large portion where $\psi_{\S^*} \equiv 0$.

The reader may want to see Figure~\ref{fig:decomposition_A} for intuition about the subsequent arguments.

\begin{figure}[h]
	\centering	
	\makebox[\textwidth][c]{
		\begin{subfigure}{0.6\textwidth}

		\end{subfigure}
	}
	
	\caption{In the left figure, the decomposition \eqref{eq:A1A2A3} is shown: $A_1$ is flat and strictly above $\Gamma_{\S^*}$, $A_2$ is near the center of $\C_r(\overline{\mbf{y}})$ (so studied by the previous cases), and $A_3$ is close to the region where $\psi_{\S^*} \equiv 0$ by \eqref{eq:psi_zero_far} and hence is covered by the $\Strip(I_i)$ (depicted in red), as shown in Step 1. In the right figure, it is sketched why $\delta_{\Omega_{\S^*}} \lesssim \delta_{A_1^\star}$ inside $A_1$: $\Gamma_{\S^*}$ lies below the threshold $\frac{c_2\kappa}{20} R_{\S^*}$, $A_1$ lies above $\frac{c_2\kappa}{16} R_{\S^*}$, and $\delta_{A_1^\star}$ lies right in between (at height $\frac{c_2\kappa}{18} R_{\S^*}$). }
	\label{fig:decomposition_A}
\end{figure}

\textbf{Step 1: Thick flat strips away from $Q(\S^*)$.}
First, we cover a neighborhood of the set $\Rn \setminus C'_{\frac14 \kappa R_{\S^*}}(x_{Q(\S^*)}, t_{Q(\S^*)})$ with a thick flat strip. For that purpose, first note that for any given $(x, t) \in \Rn \setminus C'_{\frac14 \kappa R_{\S^*}}(x_{Q(\S^*)}, t_{Q(\S^*)})$ we have, by the definitions of $D$ and $R_{\S^*}$ from Proposition~\ref{prop:WHSA}, 
\begin{equation} \label{eq:D_large_far}
	D(x, t) \geq \dist((x, t), \pi(Q(\S^*))) \geq \abs{(x, t) - (x_{Q(\S^*)}, t_{Q(\S^*)})} - \diam(Q(\S^*)) \geq \frac{\kappa}{8} R_{\S^*}.
\end{equation}
Therefore, if we define 
\begin{equation*}
	\widetilde{\widetilde{\Strip}} 
	:= \!
	\left\{ (x_0, x, t) \in \R^{n+1} : \psi_{\S^*}(x, t)  < x_0 \leq \frac{c_2 \kappa}{16} R_{\S^*} \text{ and } (x, t) \in \Rn \setminus C'_{\frac14 \kappa R_{\S^*}}(x_{Q(\S^*)}, t_{Q(\S^*)}) \right\},
\end{equation*}
we claim that it holds
\begin{equation} \label{eq:inclusion_strips}
	\widetilde{\widetilde{\Strip}} 
	\; \subset
	\bigcup_{\substack{i \in \mathcal{I} \\ I_i \cap \big(\Rn \setminus C'_{\frac14 \kappa R_{\S^*}}(x_{Q(\S^*)}, t_{Q(\S^*)})\big) \neq \emptyset}} \SStrip(I_i).
\end{equation}
Indeed, pick any $(x_0, x, t) \in \widetilde{\widetilde{\Strip}}$. Then \eqref{eq:D_large_far} implies that $(x, t) \in I_i$ for some $i \in \mathcal{I}$. Since the definition of $\widetilde{\widetilde{\Strip}}$ tells us that $(x, t) \in \Rn \setminus C'_{\frac14 \kappa R_{\S^*}}(x_{Q(\S^*)}, t_{Q(\S^*)})$, then it must be the case that $ I_i \cap \big( \Rn \setminus C'_{\frac14 \kappa R_{\S^*}}(x_{Q(\S^*)}, t_{Q(\S^*)})\big) \neq \emptyset$ because $(x, t)$ lies in that intersection. Moreover, $(x_0, x, t) \in \SStrip(I_i)$ because using \eqref{eq:D_large_far} we can estimate
\begin{equation*}
	x_0 
	\leq 
	\frac{c_2 \kappa}{16} R_{\S^*}
	\leq 
	\frac{c_2}{2} D(x, t)
	\leq 
	\psi_{\S^*}(x, t) + c_2 D(x, t)
\end{equation*}
where in the last inequality we are using \eqref{eq:psi_not_huge}, \eqref{eq:D_large_far} and choosing $\varepsilon$ small to obtain 
\begin{equation*}
	-\psi_{\S^*}(x, t)
	\leq 
	\varepsilon^{1/4} R_{\S^*}
	\leq 
	\varepsilon^{1/4} \frac{8}{\kappa} D(x, t)
	\leq 
	\frac{c_2}{2} D(x, t).
\end{equation*}
This establishes \eqref{eq:inclusion_strips}.

On the other hand, we also claim that 
\begin{equation} \label{eq:psi_max}
	\norm{\psi_{\S^*}}_{L^\infty(\R^n)}
	\leq 
	4C_{\mathrm{LA}} \kappa \varepsilon^{1/2} R_{\S^*}
	\leq 
	\frac{c_2 \kappa}{20} R_{\S^*}.
\end{equation}
Indeed, if we pick any $(x, t)$ for which $\psi_{\S^*}(x, t) \neq 0$, then $(x, t) \in C'_{4\kappa R_{\S^*}}(x_{Q(\S^*)}, t_{Q(\S^*)})$ because of \eqref{eq:psi_zero_far}. Therefore we can pick $(z, \tau) \in \partial C'_{4\kappa R_{\S^*}}(x_{Q(\S^*)}, t_{Q(\S^*)})$ (for which $\psi_{\S^*}(z, \tau) = 0$ by \eqref{eq:psi_zero_far}, too) with $\norm{(x, t) - (z, \tau)} \leq 4\kappa R_{\S^*}$. Then, the first inequality in \eqref{eq:psi_max} follows by \eqref{eq:psi_lipschitz}. The second inequality follows by choosing $\varepsilon$ small.

\textbf{Step 2: Finishing the p-CME.}
With \eqref{eq:psi_max} in mind, we split
\begin{equation} \label{eq:A1A2A3}
	\Omega_{\S^*} \cap \mbf{C}_r(\overline{\mbf{y}})
	=
	A_1 \cup A_2 \cup A_3.
\end{equation}
where we consider the following decomposition 
\begin{align*}
	A_1
	& := 
	\left\{ (x_0, x, t) \in \R \times \R^{n-1} \times \R : x_0 > \frac{c_2\kappa}{16}R_{\S^*} \right\} \cap \mbf{C}_r(\overline{\mbf{y}})
	\\ 
	A_2
	& :=
	\left\{ (x_0, x, t) \in \R^{n+1} : x_0 \leq \frac{c_2\kappa}{16}R_{\S^*}, \; (x, t) \in C'_{\frac12 \kappa R_\S^*}(x_{Q(\S^*)}, t_{Q(\S^*)}) \right\} 
	\cap \Omega_{\S^*} 
	\cap \mbf{C}_r(\overline{\mbf{y}})
	\\ 
	A_3
	& :=
	\left\{ (x_0, x, t) \in \R^{n+1} : x_0 \leq \frac{c_2\kappa}{16}R_{\S^*}, \; (x, t) \in \Rn \setminus C'_{\frac12 \kappa R_\S^*}(x_{Q(\S^*)}, t_{Q(\S^*)}) \right\} 
	\cap \Omega_{\S^*} 
	\cap \mbf{C}_r(\overline{\mbf{y}}).
\end{align*}

The part of the p-CME estimate \eqref{goal:cme} that happens inside the set $A_2$ satisfies the correct bound, because of  the arguments in the two previous subsections, because \eqref{eq:assumption_local} is satisfied. The part inside the set $A_3$ also satisfies the desired bounds because it is contained in $\widetilde{\widetilde{\Strip}}$, and since we have the inclusion \eqref{eq:inclusion_strips}, we can repeat the arguments in Steps 1.2 and 1.3 from the previous subsections to get p-CME.  

In the set $A_1$ the situation is different: although p-CME in this subdomain holds because it is a half-space (so the region above the graph of a parabolic uniformly rectifiable set, whence we have Lemma~\ref{lem:ur_implies_cme}), the distance to the boundary of this set is not the same as $\delta_{\Omega_{\S^*}}$, so we need to be a little more careful. Indeed, write 
\begin{equation*}
	A_1^\star := \left\{ (x_0, x, t) \in \R \times \R^{n-1} \times \R : \; x_0 > \frac{c_2\kappa}{18}R_{\S^*} \right\}, 
\end{equation*}
so that, recalling that $\norm{\psi_{\S^*}}_\infty \leq \frac{c_2\kappa}{20} R_{\S^*}$, we have 
\begin{equation*}
	A_1 \subset A_1^\star \subset \Omega_{\S^*},
\end{equation*}
and there is a uniform separation of the order of $R_{\S^*}$ in between them. We claim that there is a large enough $M_2 \geq 1$ such that 
\begin{equation} \label{eq:comparison_distances}
	\delta_{\Omega_{\S^*}}(x_0, x, t) \leq M_2 \, \delta_{A_1^\star}(x_0, x, t), 
	\qquad 
	(x_0, x, t) \in A_1.
\end{equation}
Taking this for granted momentarily, let us quickly finish the proof of the p-CME estimate over $A_1$. Indeed, if $A_1 \cap \C_r(\overline{\mbf{y}}) \neq \emptyset$ (otherwise the estimate is trivial), pick $\mathbf{Z} \in \partial A_1^\star \cap \C_r(\overline{\mbf{y}})$ and estimate, using Lemma~\ref{lem:ur_implies_cme}\footnote{
	Actually, in this specific case, one does not need the full power of Lemma~\ref{lem:ur_implies_cme}: $A_1^\star$ is a half-space, which is far simpler than a generic graphical parabolic UR domain, for which Lemma~\ref{lem:ur_implies_cme} was devised. In this concrete case, significant simplifications arise from the fact that the Green's function is just the vertical coordinate (in which the graph is parametrized), so its derivatives simply vanish, and this makes the argument much easier. 
}
\begin{multline*}
	\iiint_{A_1 \cap \C_r(\overline{\mbf{y}})} \abs{\nabla u}^2 \delta_{\Omega_{\S^*}}
	\lesssim 
	\iiint_{A_1 \cap \C_r(\overline{\mbf{y}})} \abs{\nabla u}^2 \delta_{A_1^\star}
	\leq 
	\iiint_{A_1^\star \cap \C_r(\overline{\mbf{y}})} \abs{\nabla u}^2 \delta_{A_1^\star}
	\\ \leq 
	\iiint_{A_1^\star \cap \mbf{C}_{2r}(\mathbf{Z})} \abs{\nabla u}^2 \delta_{A_1^\star}
	\lesssim 
	r^{n+1}, 
\end{multline*}
which finishes the proof of Theorem~\ref{th:push_cme} (modulo checking \eqref{eq:comparison_distances}).

Let us finish by proving \eqref{eq:comparison_distances}. First note that by \eqref{eq:psi_max}, it holds
\begin{equation*}
	\delta_{\Omega_{\S^*}}(x_0, x, t) 
	\leq 
	\abs{x_0 - \psi_{\S^*}(x, t)}
	\leq 
	x_0 + \abs{\psi_{\S^*}(x, t)}
	\leq 
	x_0 + \frac{c_2\kappa}{20} R_{\S^*}, 
\end{equation*}
and on the other hand we clearly have $\delta_{A_1^\star}(x_0, x, t) = x_0 - \frac{c_2\kappa}{18} R_{\S^*}$. So in order for \eqref{eq:comparison_distances} to hold, it suffices that 
\begin{equation*}
	x_0 + \frac{c_2\kappa}{20} R_{\S^*}
	\leq 
	M_2 \left(x_0 - \frac{c_2\kappa}{18} R_{\S^*}\right), 
	\quad \text{ or equivalently, } \quad 
	x_0 
	\geq 
	\frac{\frac{c_2\kappa}{20} + \frac{c_2\kappa}{18} M_2}{M_2-1} R_{\S^*}, 
\end{equation*}
Since $x_0 > \frac{c_2\kappa}{16} R_{\S^*}$ (by definition of $A_1$) and the right hand side converges to $\frac{c_2\kappa}{18} R_{\S^*}$ as $M_2 \to +\infty$, we can choose $M_2$ large enough (uniform) so that the inequality is satisfied. This finishes the proof of \eqref{eq:comparison_distances}, and hence the proof of Theorem~\ref{th:push_cme}.



\section{Proofs of the main results} \label{sec:proof}

After all the work in the previous sections, the proofs of our main results, namely Theorems~\ref{th:main}, \ref{th:main2} and \ref{th:all}, consist in simply wrapping up the results.

\begin{proof}[\textbf{Proof of Theorem~\ref{th:main2}}]
	Since the caloric measure for $\Omega$ admits a corona decomposition, we apply Proposition~\ref{prop:WHSA} to find a refined decomposition of the dyadic grid $\D$ in subtrees $\S^*$ such that, associated to each $\S^*$, there is a $\Lip(1,1/2)$ graph approximating $\Sigma$ at the scale of $\S^*$, along with many extra properties. At this point, we do not know whether these approximating graphs are regular, so we cannot yet assert the packing condition in the last item of Proposition~\ref{prop:WHSA}.
	
	Now, we run Theorem~\ref{th:push_cme} (it also uses Lemma~\ref{lem:u_upper_bounds}, which is available under our assumptions) to find that the approximating graphs constructed right before are in fact regular (with constants uniform with respect to those in our assumptions). This allows us to assert that the last item in Proposition~\ref{prop:WHSA}, namely the packing condition for the graph trees, holds. This packing condition along with the rest of properties in Proposition~\ref{prop:WHSA}, concretely \eqref{eq:Gamma_approximates}, means that we have effectively found a coronization of $\Sigma$ by approximating regular $\Lip(1,1/2)$ graphs in the sense of Definition~\ref{def:unilateralcorona}. Thus, Theorem~\ref{th:corona_implies_UR} implies that $\Sigma$ is parabolic uniformly rectifiable.
\end{proof}

\begin{proof}[\textbf{Proof of Theorem~\ref{th:main}}]
	Using Theorem~\ref{th:corona}, we obtain a corona decomposition for $\omega$ with respect to $\sigma$ (remember that the packing condition for bad and maximal cubes strongly requires the use of the p-CME assumption, along with Proposition~\ref{prop:GMT}). Then, we can simply invoke Theorem~\ref{th:main2}.
\end{proof}

\begin{proof}[\textbf{Proof of Theorem~\ref{th:all}}]
	As already said after the statement of Theorem~\ref{th:all}, just put together Theorems~\ref{th:corona} and \ref{th:main2}, and \cite[Theorem 1.3]{BHHLN}.
\end{proof}

\begin{remark}[Dependencies of constants] \label{rem:dependencies}
	Along the previous sections we have used many constants, and one can check the dependencies do not lead to any circular argument. In case it helps the reader follow the arguments, let us sum up the (most relevant parts of the) graph of dependencies in the order that the constants are fixed. In all the proofs, the parameters $n, \vartheta, C_{\mathrm{ADR}}, C_{\mathrm{CDC}}, C_{\mathrm{CKS}}$ are fixed. The additional constants in Section~\ref{sec:preliminaries}, like $C_{\mathrm{cub}}$ and $c_{\mathrm{B}}$, depend only on those. In Section~\ref{sec:GMT}, $\tau = \tau(n, C_{\mathrm{ADR}}, C_{\mathrm{CDC}})$, and later $\varepsilon' = \varepsilon'(\tau)$. In Section~\ref{sec:corona}, $N = N(\tau, \varepsilon')$ and $\lambda = \lambda(\varepsilon')$, later $J = J(\lambda)$, and finally $C_{\mathrm{cor}} = C_{\mathrm{cor}}(C_{\mathrm{CME}}, \varepsilon', \tau, N, J)$. In Section~\ref{sec:WHSA}, $K_0, L, C_{\mathrm{LA}}$ depend on $n, C_{\mathrm{ADR}},C_{\mathrm{CDC}}, C_{\mathrm{cor}}$; and the parameter $\varepsilon$ is chosen depending on all these and, additionally, $K_0$ and $c_2$ from Section~\ref{sec:pushing_CME}: the dependence on $c_2$ is actually not a problem because $c_2 = c_2(n)$. The constant $C_{\mathrm{int}}$ from Lemma~\ref{lem:u_upper_bounds} depends on $n, C_{\mathrm{ADR}}, C_{\mathrm{CDC}}, C_{\mathrm{cor}}$ and additionally, $\varepsilon$. Later, in Section~\ref{sec:pushing_CME}, $k_0 = k_0(n)$, $C_0 = C_0(n, C_{\mathrm{ADR}})$, $c_1 = c_1(n)$, $c_2 = c_2(c_1, k_0)$, $\mu = \mu(n, c_2)$, $M_1 = M_1(\mu)$, and the final constant $C_{\mathrm{CME}}$ depends on all these (and we made dependencies on $\varepsilon$ explicit because it is somehow the last constant to be fixed). The reader can now see easily that the parameters have been fixed in the correct order.
\end{remark}

To conclude, let us point out that we expect the results to also hold for parabolic equations with variable coefficients, at least when the oscillations of these coefficients are controlled in a suitable Carleson measure sense, as is the case in \cite{BFHH}. Indeed, the integration by parts schemes in \cite[Section 6]{BFHH} should allow to adapt the arguments of Section~\ref{sec:pushing_CME} to the setting of variable coefficients, and the rest of the arguments should work very similarly as in the present paper, up to technical details. It is also possible that, if suitably interpreted, some of the implications in Theorem~\ref{th:all} hold under even weaker assumptions over the coefficients, as is the case in the elliptic setting, as shown in \cite{CHM}.


\appendix

\section{Some auxiliary computations}

Let us carry out in detail some computations that were used in the main body of the paper, which we have preferred to isolate here because they are classical or fairly routine. Nevertheless, we believe that including the details will be convenient for the reader.

\textbf{The weak-(1, 1) bound in Theorem~\ref{th:corona}.}
Let us provide a quick (classical) proof of the weak-(1, 1) estimate that is claimed in \eqref{eq:packing_HD}.

Given $\lambda > 0$, denote 
\begin{equation*}
	E_{\text{bad}} := \big\{ \mbf{x} \in Q_0 : \mathcal{M}_\sigma \omega^{\X_1}(\mbf{x}) > \lambda \big\}
\end{equation*}
Then, for any $\mbf{x} \in E_{\text{bad}}$, there exists some surface cylinder $\Delta_{\mbf{x}} := \C_r(\mbf{x}) \cap \Sigma$ such that $\omega^{\X_1}(\Delta_{\mbf{x}}) > \lambda \sigma(\Delta_{\mbf{x}})$. Therefore, we can compute, using the doubling property of $\sigma$ (because $\Sigma$ is p-ADR), 
\begin{multline*}
	\sigma \Big( \big\{ \mbf{x} \in Q_0 : \mathcal{M}_\sigma \omega^{\X_1}(\mbf{x}) > \lambda \big\} \Big)
	\leq 
	\sigma \Big( \bigcup_{\mbf{x} \in E_{\text{bad}} } \Delta_{\mbf{x}} \Big) 
	\leq 
	\sigma \Big( \bigcup_{\mbf{x} \in \mathcal{I}} 5\Delta_{\mbf{x}} \Big) 
	\\ \lesssim
	\sum_{\mbf{x} \in \mathcal{I}} \sigma(\Delta_{\mbf{x}})
	\leq 
	\frac{1}{\lambda} \sum_{\mbf{x} \in \mathcal{I}} \omega^{\X_1}(\Delta_{\mbf{x}})
	\leq 
	\frac{1}{\lambda} \omega^{\X_1}(Q_0).
\end{multline*}
where $\mathcal{I}$ is a family of indices provided by Vitali's covering theorem, so that the associated $\{\Delta_{\mbf{x}}\}_{\mbf{x} \in \mathcal{I}}$ are disjoint, yet $\bigcup_{\mbf{x} \in \mathcal{I}} 5\Delta_{\mbf{x}} $ covers the whole set $E_{\text{bad}}$.
Using this computation and the layer cake formula with $\lambda_0 = \frac{\omega^{\X_1}(Q_0)}{\sigma(Q_0)}$:
\begin{align*}
	\iint_{Q_0} (\mathcal{M}_\sigma \omega^{\X_1})^{1/2} d\sigma 
	& \approx 
	\int_0^{\lambda_0} \lambda^{-1/2} \sigma \left( \{ \mbf{x} : \mathcal{M}_\sigma \omega^{\X_1}(\mbf{x}) > \lambda \} \right) d\lambda 
	\\ & \quad + \int_{\lambda_0}^\infty \lambda^{-1/2} \sigma \left( \{ \mbf{x} : \mathcal{M}_\sigma \omega^{\X_1}(\mbf{x}) > \lambda \} \right) d\lambda 
	\\ & \lesssim 
	\int_0^{\lambda_0} \lambda^{-1/2} \sigma(Q_0) d\lambda 
	+ \int_{\lambda_0}^\infty \lambda^{-1/2} \lambda^{-1} \omega^{\X_1}(Q_0) d\lambda 
	\\ & \approx 
	\lambda_0^{1/2} \sigma(Q_0) 
	+ \lambda_0^{-1/2} \omega^{\X_1}(Q_0)
	\\ & \approx 
	\sqrt{\sigma(Q_0) \omega^{\X_1}(Q_0)},
\end{align*}
which is precisely what we used in \eqref{eq:packing_HD}.

\textbf{Poles are far from the region of integration in Theorem~\ref{th:push_cme}.} In the proof of Theorem~\ref{th:push_cme}, concretely in Substep 2.3, it is asserted that $H^*\widehat{u} = 0$ in $\Omega_{\S^*}'$. This follows from the fact that $\widehat{u}$ is an adjoint Green's function with pole at $(Y_\S, s_\S)$ (see \eqref{eq:def_hatu}), and one can check that $(Y_\S, s_\S) \notin \Omega_{\S^*}'$. Indeed, let us elaborate on why the latter is true.

On the one hand, $\Omega'_{\S^*} \subset \bigcup_{Q \in \S^*} \widetilde{\mbf{U}}_Q^\good$ by \eqref{eq:containment}, and each of the $\widetilde{\mbf{U}}_Q^\good$ are contained (see Subsection~\ref{subsec:whitney}) in $\mbf{C}_{\varepsilon^{-4}\ell(Q)}(Y_Q^\good, s_Q^\good)$ for some $(Y_Q^\good, s_Q^\good) \in \mbf{U}_Q^\good$. Moreover, $(Y_Q^\good, s_Q^\good) \in \mbf{C}_{C(n)K_0\ell(Q)}(\mbf{x}_Q)$ by \eqref{eq:whitney_bounded}, where $\mbf{x}_Q$ is the center of $Q$ (see Subsection~\ref{subsec:geometry}). Therefore, noting that $\ell(Q) \leq \ell(Q(\S^*))$ because $Q \in \S^*$, we infer that 
\begin{equation*}
	\Omega'_{\S^*} 
	\subset 
	\mbf{C}_{2C(n)K_0\varepsilon^{-4}\ell(Q(\S^*))}(\mbf{x}_{Q(\S^*)}) 
	\subset 
	\mbf{C}_{\varepsilon^{-5}\ell(Q(\S^*))}(\mbf{x}_{Q(\S^*)})
\end{equation*}
using also \eqref{eq:epsilon_K_0}.

On the other hand, by Theorem~\ref{th:corona} it holds 
\begin{equation*}
	\delta_\Omega(Y_\S, s_\S) 
	\geq 
	100\diam(Q(\S)) 
	\geq 
	C_{\mathrm{cub}}^{-1} 100\varepsilon^{-10}\ell(Q(\S^*)),
\end{equation*}
where we have also used the fact that $\ell(Q(\S^*)) \leq \varepsilon^{10}\ell(Q(\S))$ because $Q(\S^*) \in \S^*$ (indeed, as can be seen before (8.6) and in Propositions 7.69 and 8.12 in \cite{BHMN}, the graph trees $\S^*$ are all contained in $\D_{\F, Q(\S)}^* = \{ Q \in \D_{\F, Q(\S)} : \ell(Q) \leq \varepsilon^{10} \ell(Q(\S))\}$ from \cite[(6.35)]{BHMN}).

In view of the above, since we can choose $\varepsilon > 0$ small enough, it is clear that $(Y_\S, s_\S)$ lies very far outside $\Omega'_{\S^*}$, as desired.




\begin{thebibliography}{AAAAAAA} 
\parskip=0.1cm

\bibitem[Ai02]{Aikawa} Hiroaki Aikawa, \emph{H\"{o}lder continuity of the Dirichlet solution for a general domain}, Bull. London Math. Soc \textbf{34} (2002), 691--702.

\bibitem[Ar68]{A} Donald G. Aronson, \emph{Non-negative solutions of linear parabolic equations}, Ann. Sc. Norm. Super. Pisa Cl. Sci. \textbf{22} (1968), 607--694.

\bibitem[AGMT23]{AGMT} Jonas Azzam, John Garnett, Mihalis Mourgoglou and Xavier Tolsa, \emph{Uniform rectifiability, elliptic measure, square functions, and $\varepsilon$-approximability via an ACF monotonicity formula}, Int. Math. Res.
Not. \textbf{13} (2023), 10837--10941.

\bibitem[AHMMT20]{AHMMT} Jonas Azzam, Steve Hofmann, José María Martell, Mihalis Mourgoglou and Xavier Tolsa,
\emph{Harmonic measure and quantitative connectivity: geometric characterization of the $L^p$-solvability of the Dirichlet problem}, Invent. Math. \textbf{222} (2020), 881--993.

\bibitem[BFHH25]{BFHH} Simon Bortz, Sandra Ferris, Pablo Hidalgo-Palencia and Steve Hofmann, \emph{A variable coefficient free boundary problem for $L^p$ solvability of parabolic Dirichlet problem in graph domains}, arXiv preprint 2503.00873 (2025), 54 pp. 

\bibitem[BHHLN22a]{BHHLN_22a} Simon Bortz, John Hoffman, Steve Hofmann, José Luis Luna-García and Kaj Nystr\"{o}m, \emph{Coronizations and big pieces in metric spaces}, Ann. Inst. Fourier (Grenoble) \textbf{72} (2022), 2037--2078.

\bibitem[BHHLN22b]{BHHLN_22b} Simon Bortz, John Hoffman, Steve Hofmann, José Luis Luna-García and Kaj Nystr\"{o}m, \emph{On big pieces approximations of parabolic hypersurfaces}, Ann. Fenn. Math. \textbf{47} (2022), 533--571.

\bibitem[BHHLN23a]{BHHLN} Simon Bortz, John Hoffman, Steve Hofmann, José Luis Luna-García and Kaj Nystr\"{o}m, \emph{Carleson measure estimates for caloric functions and parabolic uniformly rectifiable sets}, Anal. PDE \textbf{16} (2023), 1061--1088.

\bibitem[BHHLN23b]{BHHLN_corona} Simon Bortz, John Hoffman, Steve Hofmann, José Luis Luna-García and Kaj Nystr\"{o}m, \emph{Corona decompositions for parabolic uniformly rectifiable sets}, J. Geom. Anal. \textbf{33} (2023), art. no. 96, 67 pp.

\bibitem[BHMN25a]{BHMN_graph} Simon Bortz, Steve Hofmann, José María Martell and Kaj Nystr\"{o}m, \emph{Solvability of the $L^p$ Dirichlet problem for the heat equation is equivalent to parabolic uniform rectifiability in the case of a parabolic Lipschitz graph}, Invent. Math. \textbf{239} (2025), 165--217.

\bibitem[BHMN25b]{BHMN} Simon Bortz, Steve Hofmann, José María Martell and Kaj Nystr\"{o}m, \emph{Solvability of the Dirichlet problem for the heat equation implies parabolic uniform rectifiability}, arXiv preprint 2510.22047 (2025), 101 pp.

\bibitem[CHM24]{CHM} Mingming Cao, Pablo Hidalgo-Palencia and José María Martell, \emph{Carleson measure estimates, corona decompositions, and perturbation of elliptic operators without connectivity}, Math. Ann. \textbf{390} (2024), 95--156.

\bibitem[CHMPZ25]{CHMPZ} Mingming Cao, Pablo Hidalgo-Palencia, José María Martell, Cruz Prisuelos-Arribas and Zihui Zhao, \emph{Elliptic operators in rough sets, and the Dirichlet problem with boundary data in H\"{o}lder spaces}, J. Funct. Anal. \textbf{288} (2025), art. no. 110801, 68 pp.

\bibitem[Ch90]{Christ} Michael Christ, \emph{A T(b) theorem with remarks on analytic capacity and the Cauchy integral}, Colloq. Math. \textbf{60/61} (1990), 601--628.

\bibitem[Da77]{D} Bj\"{o}rn E.J. Dahlberg, \emph{Estimates of harmonic measure}, Arch. Rational Mech. Anal. \textbf{65} (1977), 275--288.

\bibitem[DS91]{DS1} Guy David and Stephen Semmes, \emph{Singular integrals and rectifiable sets in $R^n$: Beyond Lipschitz graphs}, Astérisque \textbf{193} (1991), 145 pp.

\bibitem[DS93]{DS2} Guy David and Stephen Semmes, Analysis of and on uniformly rectifiable sets, vol. 38 of Math. Surveys Monogr. Amer. Math. Soc., Providence, RI, 1993.

\bibitem[DKP11]{DKP} Martin Dindo\v{s}, Carlos Kenig and Jill Pipher, \emph{BMO solvability and the $A_\infty$ condition for elliptic operators}, J. Geom. Anal. \textbf{21} (2011), 78--95.

\bibitem[DPP17]{DPP} Martin Dindo\v{s}, Jill Pipher and Stefanie Petermichl, \emph{BMO solvability and the $A_\infty$ condition for second order parabolic operators}, Ann. Inst. H. Poincaré Anal. Non Linéaire \textbf{34} (2017), 1155–1180.

\bibitem[EG82]{EG_Wiener} Lawrence C. Evans and Ronald F. Gariepy, \emph{Wiener’s criterion for the heat equation}, Arch. Ration. Mech. Anal. \textbf{78} (1982), 293--314.

\bibitem[EG15]{EG} Lawrence C. Evans and Ronald F. Gariepy, Measure theory and fine properties of functions. Textbooks in Mathematics, CRC Press, Boca Raton, FL, revised edition, 2015.

\bibitem[FGL89]{FGL} Eugene B. Fabes, Nicola Garofalo and Ermanno Lanconelli, \emph{Wiener’s criterion for divergence form parabolic operators with $C^1$-Dini continuous coefficients}, Duke Math. J. \textbf{59} (1989), 191--232.

\bibitem[FGS86]{FGS} Eugene B. Fabes, Nicola Garofalo and Sandro Salsa, \emph{A backward Harnack inequality and Fatou theorem for nonnegative solutions of parabolic equations}, Illinois J. Math. \textbf{30} (1986), 536–565.

\bibitem[FS72]{FS} Charles Fefferman and Elias M. Stein, \emph{$H^p$ spaces of several variables}, Acta Math. \textbf{129} (1972), 137--193.

\bibitem[GL88]{GL} Nicola Garofalo and Ermanno Lanconelli, \emph{Wiener’s criterion for parabolic equations with variable coefficients and its consequences}, Trans. Amer. Math. Soc. \textbf{308} (1988), 811--836.

\bibitem[GH20]{GH_RH} Alyssa Genschaw and Steve Hofmann, \emph{A weak reverse H\"{o}lder inequality for caloric measure}, J. Geom. Anal. \textbf{30} (2020), 1530--1564.

\bibitem[GH21]{GH_BMO} Alyssa Genschaw and Steve Hofmann, \emph{BMO solvability and absolute continuity of caloric measure}, Potential Anal. \textbf{54} (2021), 451--469.

\bibitem[GMT18]{GMT} John Garnett, Mihalis Mourgoglou and Xavier Tolsa, \emph{Uniform rectifiability from Carleson measure estimates and $\varepsilon$-approximability of bounded harmonic functions}, Duke Math. J. \textbf{167} (2018), 1473--1524.

\bibitem[HJ25]{HJ} John Hoffman and Benjamin Jaye, \emph{On singular integrals and quantitative rectifiability in parabolic space and the Heisenberg group}, arXiv preprint 2510.26934 (2025), 35 pp.

\bibitem[HL18]{HLe} Steve Hofmann and Phi Le, \emph{BMO solvability and absolute continuity of harmonic measure}, J. Geom. Anal. \textbf{28} (2018), 3278--3299.

\bibitem[HLMN17]{HLMN} Steve Hofmann, Phi Le, José María Martell and Kaj Nystr\"{o}m, \emph{The weak-$A_\infty$ property of harmonic and $p$-harmonic measures implies uniform rectifiability}, Anal. PDE \textbf{10} (2017), 513–558.

\bibitem[HL96]{HL} Steve Hofmann and John L. Lewis, \emph{$L^2$ solvability and representation by caloric layer potentials in time-varying domains}, Ann. of Math. \textbf{144} (1996), 349--420.

\bibitem[HL01]{HL01} Steve Hofmann and John L. Lewis, \emph{The Dirichlet problem for parabolic operators with singular drift terms}, Mem. Amer. Math. Soc. \textbf{151} (2001), 113 pp.

\bibitem[HL05]{HL05} Steve Hofmann and John L. Lewis, \emph{The $L^p$ Neumann problem for the heat equation in noncylindrical domains}, J. Funct. Anal. \textbf{220} (2005), 1--54.

\bibitem[HMM16]{HMM} Steve Hofmann, José María Martell and Svitlana Mayboroda, \emph{Uniform rectifiability, Carleson measure estimates, and approximation of harmonic functions}, Duke Math. J. \textbf{165} (2016), 2331–-2389.

\bibitem[HW25]{HW} Steve Hofmann and James Warta, \emph{A characterization of solvability of the parabolic $L^p$ Dirichlet problem on Lipschitz graph domains via Carleson measure estimates of bounded solutions}, arXiv preprint 2509.04701 (2025), 19 pp.

\bibitem[HHK25]{HHK} Pablo Hidalgo-Palencia, Cody Hutcheson and Joseph Kasel, \emph{The parabolic Dirichlet problem with continuous and H\"{o}lder boundary data, and rough coefficients}, arXiv preprint 2510.04833 (2025), 41 pp. 

\bibitem[HK12]{HytKai} Tuomas Hyt\"{o}nen and Anna Kairema, \emph{Systems of dyadic cubes in a doubling metric space}, Colloq. Math. \textbf{126} (2012), 1--33.

\bibitem[KW80]{KW} Robert Kaufman and Jang-Mei Wu, \emph{Singularity of parabolic measures}, Compositio Math. \textbf{40} (1980), 243--250.

\bibitem[KKPT14]{KKiPT} Carlos Kenig, Bernd Kirchheim, Jill Pipher and Tatiana Toro, \emph{Square functions and the $A_\infty$ property of elliptic measures}, J. Geom. Anal. \textbf{26} (2014), 2383--2410.

\bibitem[KKPT00]{KKoPT} Carlos Kenig, Herbert Koch, Jill Pipher and Tatiana Toro, \emph{A new approach to absolute continuity of elliptic measure, with applications to non-symmetric equations}, Adv. Math. \textbf{153} (2000), 231--298.

\bibitem[La73]{L} Ermanno Lanconelli, \emph{Sul problema di Dirichlet per l’equazione del calore}, Ann. Math. Pura Appl. \textbf{97} (1973), 83--114.

\bibitem[LM95]{LM} John L. Lewis and Margaret A. M. Murray, \emph{The method of layer potentials for the heat equation in time-varying domains}, Mem. Amer. Math. Soc., \textbf{114} (1995), viii+157.

\bibitem[LV06]{LV1} John L. Lewis and Andrew Vogel, \emph{Uniqueness in a free boundary problem}, Communications in PDE \textbf{31} (2006), 1591--1614.

\bibitem[LV07]{LV2} John L. Lewis and Andrew Vogel, \emph{Symmetry theorems and uniform rectifiability,} Boundary Value Problems (2007), art. no. 030190, 59 pp.

\bibitem[MP21]{MP} Mihalis Mourgoglou and Carmelo Puliatti, \emph{Blow-ups of caloric measure in time varying domains and applications to two-phase problems}, J. Math. Pures Appl. \textbf{152} (2021), 1--68.

\bibitem[Mu85]{M} Margaret A. M. Murray, \emph{Commutators with fractional differentiation and BMO Sobolev spaces}, Indiana Univ. Math. J., \textbf{34} (1985), 205--215.

\bibitem[NTV14]{NTV} Fedor Nazarov, Xavier Tolsa and Alexander Volberg, \emph{On the uniform rectifiability of AD-regular measures with bounded Riesz transform operator: the case of codimension 1}, Acta Math. \textbf{213} (2014), 237--321.

\bibitem[NS17]{NS} Kaj Nystr\"{o}m and Martin Str\"{o}mqvist, \emph{On the parabolic Lipschitz approximation of parabolic uniform rectifiable sets}, Rev. Mat. Iberoam. \textbf{33} (2017), 1397--1422.

\bibitem[Wa12]{Watson} Neil A. Watson, Introduction to heat potential theory, Mathematical Surveys and Monographs
182, Amer. Math. Soc., Providence, RI, 2012.

\bibitem[Wi24]{Wiener} Norbert Wiener, \emph{The Dirichlet problem}, J. Math. and Phys. \textbf{3} (1924), 127--146.

\end{thebibliography}
\end{document}